\pgfplotsset{compat=1.15}
\theoremstyle{definition}
\newtheorem{Def}{Definition}[section]
\newtheorem{ex}[Def]{Example}
\newtheorem{rem}[Def]{Remark}
\theoremstyle{plain}
\newtheorem{prop}[Def]{Proposition}
\newtheorem{thm}[Def]{Theorem}
\newtheorem*{thm*}{Theorem}
\newtheorem{lem}[Def]{Lemma}
\newtheorem{cor}[Def]{Corollary}
\newtheorem*{cor*}{Corollary}
\newtheorem*{con*}{Conjecture}
\newtheorem*{frag*}{Question}
\newtheorem*{verm*}{Vermutung}
\newcommand{\RR}{\mathbb{R}}
\newcommand{\CC}{\mathbb{C}}
\newcommand{\ZZ}{\mathbb{Z}}
\newcommand{\NN}{\mathbb{N}}
\newcommand{\QQ}{\mathbb{Q}}
\renewcommand{\d}{\partial}
\newcommand{\rank}{\operatorname{rk}}
\newcommand{\conv}{\operatorname{conv}}
\newcommand{\Ext}{\operatorname{Ext}}
\newcommand{\CoExt}{\operatorname{CoExt}}
\newcommand{\im}{\operatorname{Im}}
\newcommand{\newt}{\operatorname{Newt}}
\newcommand{\tr}{\operatorname{tr}}
\title[Matroids on Eight Elements with HPP]{Matroids on Eight Elements with the Half-plane Property and Related Concepts}
\author{Mario Kummer}
\address{Technische Universit\"at Dresden, Germany} 
\email{mario.kummer@tu-dresden.de}
\author{B\"u\c{s}ra Sert}
\address{DLR-German Aerospace Center, Germany} 
\email{buesra.sert@dlr.de}
\thanks{Both authors have been supported by the DFG under Grant No.421473641.}
\newcommand{\comment}[1]{}
\begin{document}

\subjclass[2010]{Primary: 05B35; Secondary: 90C22}

\begin{abstract}
 We classify all matroids with at most $8$ elements that have the half-plane property,  and we provide a list of some matroids on $9$ elements that have, and that do not have the half-plane property. Furthermore, we prove that several classes of matroids and polynomials that are motivated by the theory of semidefinite programming are closed under taking minors and under passing to faces of the Newton polytope.
\end{abstract}
\maketitle

\section{Introduction}
A multivariate polynomial is said to have the \emph{half-plane property} (HPP) if it does not vanish whenever we plug in for each variable a complex number from the open right half-plane. One of the initial motivations for this property comes from electrical networks; the theory suggests that the spanning tree polynomial of any graph should have the half-plane property by Kirchhoff's matrix tree theorem. Later on, in their seminal paper Choe, Oxley, Sokal and Wagner  \cite{Choe_2004} generalized this setup to arbitrary, not necessarily graphic, matroids by showing that the support of every homogeneous multiaffine polynomial with the half-plane property must be the collection of bases of a matroid. Conversely, they define a matroid to have the half-plane property if its basis generating polynomial has the half-plane property, and they initiate a thorough investigation of which matroids have the half-plane property. These matroids generalize graphic matroids, and they have several combinatorially interesting properties. In particular, they share a plethora of so-called negative dependence properties that are not satisfied by arbitrary matroids, see e.g. \cite{negdep,ineqs}. Moreover, non-negativity of the Rayleigh difference
\[
  \Delta_{ij}(h):= \frac{\d{h}}{\d{x_{i}}}\cdot\frac{\d{h}}{\d{x_{j}}}
  -\frac{\d^{2}{h}}{\d{x_{i}}\d{x_{j}}}\cdot h
\]
of the basis generating polynomial of such matroids for all indices $i,j$ is one of their characteristics. The class of matroids with the half-plane property has several desirable properties such as being closed under minors, duality,
direct sums, 2-sums \cite{Choe_2004}. Furthermore, it includes several families of matroids like uniform, regular and $\sqrt[6]{1}$-matroids. On the other hand, it is rather difficult to determine whether a given matroid has the half-plane property. Oxley, for instance, asks for a ``practically feasible algorithm for testing whether a
matroid is HPP'' in his book \cite[Problem 15.8.10]{oxley}. One of the main results of this paper is a complete classification of which matroids on at most $8$ elements have the half-plane property. This is achieved by checking theoretical criteria on the Rayleigh differences due to Br\"and\'en \cite{Branden2007} and Wagner--Wei \cite{Wagner_2009} by computational means using state-of-the-art software packages like the packages ``SumsOfSquares'' \cite{SumsOfSquaresArticle} and ``Matroids'' \cite{MatroidsArticle} for Macaulay2 \cite{M2} and the Julia package ``HomotopyContinuation.jl'' \cite{inbook}. Even though these methods are mostly numerical, our heuristical algorithm produces for each matroid with at most $8$ elements a \emph{symbolic} certificate for whether it has the half-plane property or not. Our classification yields a list of $32$ matroids with at most $8$ elements that are minor-minimal with respect to not having the half-plane property. These include the ten forbidden minors of rank $3$ on $7$ elements that were already found in \cite{Wagner_2009}, namely the Fano matroid, three of its relaxations, the free extension of $M(K_4)$ by one element and their duals. All other forbidden minors are sparse paving matroids of rank $4$ on $8$ elements. These include the self-dual matroid $P_8$ and three of its relaxations as well as the co-extension of $P_7$ and its dual.  Moreover,  we conduct tests on the half-plane property on matroids on $9$ elements of rank $3$ and $4$. Our tests confirm that the Pappus, non-Pappus and (non-Pappus$\backslash 9$)$+e$ matroids are the only forbidden minors for the half-plane property that are on $9$ elements with rank $3$. Among those of rank $4$, we provide a list of $4125$ matroids that have the half-plane property, and a list of $1218$ matroids that are forbidden (minimal) minors for the half-plane property. 

There is also ample interest in the half-plane property from the point of view of convex optimization. Consider for instance, the feasible sets of \emph{semidefinite programming} (SDP) that are so-called \emph{spectrahedral cones}. These are semi-algebraic convex sets that can (up to a linear change of coordinates) be described as the closure of the connected component of $C_f=\{x\in\RR^n:\,f(x)\neq0\}$ that contains the positive orthant for a suitable homogeneous polynomial $f$ with the half-plane property. The question whether, conversely, the set $C_f$ is a spectrahedral cone for every homogeneous $f$ with the half-plane property is the content of the \emph{generalized Lax conjecture}, a fundamental open question in this context \cite{Vinnikov_2012}. In \cite{Branden2011} Br\"and\'en showed that the basis generating polynomial of the V\'amos matroid $V_8$ constitutes a counter-example to some stronger predecessor of the generalized Lax conjecture. Further such examples were given in \cite{burton2014real, Amini2018} and will be given in \Cref{sosdet} of this article (on the other hand, see \cite{KumBez} for some positive result). This indicates that basis generating polynomials of matroids might be a good testing ground for the (current version of the) generalized Lax conjecture as well. We say that a matroid is \emph{spectrahedral} if its basis generating polynomial $f$ has the half-plane property and if the set $C_f$ defined as above is a spectrahedral cone. For example it is not known whether the V\'amos matroid is spectrahedral, although this is true for a related polynomial \cite{kumvam}. In \Cref{sec:hypspec} we will show that the class of spectrahedral matroids is minor-closed. More generally, we show in \Cref{sec1} that given $f$ such that $C_f$ is spectrahedral, the same is true for the polynomial that constitutes of all terms of $f$ whose monomial lies in a fixed face of the Newton polytope of $f$.

\section{The half-plane property and related concepts}\label{C}
Let $M$ be a matroid of rank $r$ on the ground set $E=\left[ n \right]$ and let $\mathcal{B}$ be the collection of its bases. The \emph{basis generating polynomial} of $M$ is defined to be $$h_M=\sum_{B \in \mathcal{B}}\prod_{i \in B} x_i.$$
Note that the basis generating polynomial of a matroid is \emph{multiaffine} in the sense that it has degree at most one in every variable.
We denote by $\rank: 2^{E}\rightarrow \NN$ the rank function of $M$. 
See \cite{oxley} for an introduction to matroid theory.

\begin{Def}
 A polynomial $0\neq f\in\CC[x_1,\ldots,x_n]$ is said to have the \emph{half-plane property} if $f(x_{1},\dots,x_{n})\neq 0$ whenever the real part of each $x_i\in\CC$ is positive.
\end{Def}

% \begin{Def}
%  A polynomial $0\neq f\in\CC[x_1,\ldots,x_n]$ is said to have the \emph{half-plane property} if there is an open half-plane $\mathcal{H}\subset \CC$ whose boundary contains the origin such that
%   $f(x_{1},\dots,x_{n})\neq 0$ whenever  $x_{1},\dots,x_{n} \in \mathcal{H}^n$.
% \end{Def}

\begin{rem}
 For homogeneous polynomials, having the half-plane property is equivalent to being \emph{stable}. A polynomial $0\neq f\in\CC[x_1,\ldots,x_n]$ is stable if $$f(x_1,\ldots,x_n)\neq0$$ whenever $\im(x_i)>0$ for all $i\in[n]$. A univariate \emph{real} polynomial $f\in\RR[t]$ is stable if and only if it has only real roots.
\end{rem}

\begin{Def}
 A matroid is said to have the \emph{half-plane property} if its basis generating polynomial is stable.
\end{Def}

\begin{rem}
 If $ f,g\in\CC[x_1,\ldots,x_n]$ are both non-zero, then the product $f\cdot g$ is stable if and only if both $f$ and $g$ are stable. Since for matroids $M_1$ and $M_2$ we have $h_{M_1\oplus M_2}=h_{M_1}\cdot h_{M_2}$ \cite[Proposition~4.5]{Choe_2004}, we can often restrict our attention to connected matroids when studying the half-plane property.
\end{rem}

\begin{Def}
 Consider a polynomial $f\in\CC[x_1,\ldots,x_n]$ with support $\mathcal{S}\subseteq \ZZ_{\geq 0}^{n}$ given as $$f=\sum_{\alpha\in\mathcal{S}}c_\alpha x^{\alpha}$$ for $c_\alpha \in \CC\setminus\{0\}$. The \emph{Newton polytope} $\newt(f)$ of $f$ is the convex hull in $\RR^n$ of all $\alpha$ with $c_\alpha\neq0$. Let $F$ be a face of $\newt(f)$. We denote $$f_F=\sum_{\alpha\in\mathcal{S}\cap F}c_\alpha x^{\alpha}.$$
\end{Def}

A standard argument shows that passing to a face of the Newton polytope preserves the half-plane property.

\begin{prop}\label{prop:facialstable}
 Let $f\in\CC[x_1,\ldots,x_n]$ be stable and $F$ a face of $\newt(f)$. Then $f_F$ is stable as well.
\end{prop}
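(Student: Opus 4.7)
The plan is to reduce the claim to a standard tropical-style degeneration argument combined with Hurwitz's theorem for multivariate stable polynomials.

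First I would pick a linear functional $\ell\in\ZZ^n$ that exposes the face $F$, i.e.\ $\ell\cdot\alpha$ attains its minimum $m$ on $\newt(f)$ precisely on $F$. Such an integral $\ell$ exists because $\newt(f)$ is a lattice polytope. For a real parameter $t>0$ I then consider the rescaling
\[
 g_t(x_1,\ldots,x_n) \;:=\; t^{-m}\, f\bigl(t^{\ell_1}x_1,\ldots,t^{\ell_n}x_n\bigr) \;=\; \sum_{\alpha\in\mathcal{S}} c_\alpha\, t^{\ell\cdot\alpha-m}\, x^{\alpha}.
\]
Since $\ell\cdot\alpha-m\geq 0$ with equality exactly for $\alpha\in\mathcal{S}\cap F$, as $t\to 0^+$ the polynomial $g_t$ converges to $f_F$, uniformly on compact subsets of $\CC^n$.

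Next I would verify that each $g_t$ is stable. The map $x_i\mapsto t^{\ell_i}x_i$ multiplies $x_i$ by a positive real number, so it preserves the condition $\im(x_i)>0$; therefore stability of $f$ transfers to $x\mapsto f(t^{\ell_1}x_1,\ldots,t^{\ell_n}x_n)$, and multiplication by the nonzero scalar $t^{-m}$ does not affect stability.

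Finally I would invoke Hurwitz's theorem in several variables: a locally uniform limit of stable polynomials is either identically zero or stable. Since $f_F$ is nonzero by construction, the sequence $g_{1/k}\to f_F$ forces $f_F$ to be stable. The main (and really only) obstacle is justifying the passage to the limit, which is handled by the multivariate Hurwitz theorem; everything else is direct. This also makes clear why the statement generalizes without change to $f$ with the half-plane property in the non-homogeneous sense, by the same rescaling with positive real $t$.
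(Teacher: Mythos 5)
Your proposal is correct and is essentially the paper's own argument: expose the face $F$ by a linear functional, rescale the variables by positive powers of a parameter so that each rescaled polynomial is stable, let the parameter tend to $0$ to obtain $f_F$, and conclude by Hurwitz's theorem (noting $f_F\neq 0$). The only cosmetic difference is that you insist on an integral exposing functional, which is not needed; the paper works with an arbitrary real one.
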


\begin{proof}
 Let $f=\sum_{\alpha\in\mathcal{S}}c_\alpha x^{\alpha}$ with support $\mathcal{S}\subseteq \ZZ_{\geq 0}^{n}$  for $c_\alpha\in\CC$.
 Let $a\in\RR^n$ and $c\in\RR$ such that $\langle a,x\rangle\geq c$ for all $x\in\newt(f)$ with equality exactly when $x\in F$. For all $\epsilon>0$ the polynomial $$f_\epsilon:=\epsilon^{-c}\cdot f(\epsilon^{a_1}x_1,\ldots,\epsilon^{a_n}x_n)=\sum_{\alpha\in\ZZ^{n}\cap\newt(f)}\epsilon^{\langle a, \alpha \rangle-c}c_\alpha x^{\alpha}$$is also stable. It follows that $f_F=\lim_{\epsilon\to0}f_\epsilon$ is stable by Hurwitz's theorem.
\end{proof}

For any $f\in\CC[x_1,\ldots,x_n]$ we denote by $f^{\#}$ and $f_{\#}$ its \emph{leading form} resp.~its \emph{initial form}, i.e., the sum of all terms of largest or lowest degree respectively. If $f$ is stable, then both $f^{\#}$ and $f_{\#}$ are stable. This has been proved in \cite[\S 2.2]{Choe_2004} but it also follows from applying \Cref{prop:facialstable} to the face of $\newt(f)$ that maximizes resp.~minimizes the all-ones vector.

The standard way of calculating the basis generating polynomial of a minor of the matroid $M$ is to delete and contract one element at a time. If $i\in E$ is not a coloop of $M$, then $h_{M\setminus\{e\}}=h_{M}|_{x_i=0}$ and $h_{M\setminus\{e\}}=\frac{\partial h_{M}}{\partial x_i}$ otherwise. If $i\in E$ is not a loop of $M$, then $h_{M/\{e\}}=\frac{\partial h_{M}}{\partial x_i}$ and $h_{M/\{e\}}=h_{M}$ otherwise. Since these operations preserve the half-plane property, this property is minor-closed \cite[\S 4.1]{Choe_2004}. The basis generating polynomials of minors can also be expressed in terms of certain initial and leading forms.

\begin{lem}\label{lem:polysofminors}
 For any $S\subseteq E$ we have:
 \begin{enumerate}
  \item $h_{M\slash S}=c\cdot(h_M|_{x_i=1\textrm{ for }i\in S})_{\#}$ for some constant $c$.
  \item $h_{M\setminus S}=c'\cdot(h_M|_{x_i=1\textrm{ for }i\in S})^{\#}$ for some constant $c'$.
 \end{enumerate}
\end{lem}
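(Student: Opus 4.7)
The plan is to expand $h_M|_{x_i=1,\ i\in S}$ as
\[
 h_M|_{x_i=1,\ i\in S}\;=\;\sum_{B\in\mathcal{B}(M)} \prod_{i\in B\setminus S} x_i
\]
and to read off the initial and leading forms from the degrees of the summands. The summand coming from $B$ has degree $|B\setminus S| = r - |B\cap S|$, so the minimum degree is $r-\rank(S)$, attained exactly when $B\cap S$ is a basis of $M|_S$, equivalently when $B\setminus S$ is a basis of $M/S$. Symmetrically, the maximum degree equals $\rank(E\setminus S)$, attained exactly when $B\setminus S$ is a basis of $M|_{E\setminus S}=M\setminus S$. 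In both cases the supports of the resulting polynomials already match those of $h_{M/S}$ and $h_{M\setminus S}$.

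What remains is to check that each surviving monomial appears with the same coefficient. For part (1), given $B'\in\mathcal{B}(M/S)$, I would show that the fibre $\{B\in\mathcal{B}(M):B\setminus S=B'\}$ is in bijection with $\mathcal{B}(M|_S)$ via $B\mapsto B\cap S$. This reduces to the matroid identity $(M/B')|_S=M|_S$. One inclusion of independent sets is trivial; for the reverse, an independent set $J\subseteq S$ of $M$ extends to a basis $I_S$ of $M|_S$, and then submodularity applied to $B'\cup I_S$ and $S$, together with $\rank_M(B'\cup S)=r$, forces $\rank_M(B'\cup I_S)=r=|B'\cup I_S|$, so that $B'\cup I_S\in\mathcal{B}(M)$ and hence $B'\cup J$ is independent in $M$. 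Thus part (1) holds with $c=|\mathcal{B}(M|_S)|$. For part (2), given $B''\in\mathcal{B}(M\setminus S)$, the analogous identity $(M/B'')|_S=M/(E\setminus S)$ is essentially immediate from the definition of contraction via an arbitrary maximal independent subset of the contracted set: $B''$ is, by hypothesis, such a subset. Hence the fibre $\{B\in\mathcal{B}(M):B\setminus S=B''\}$ is in bijection with $\mathcal{B}(M/(E\setminus S))$, giving part (2) with $c'=|\mathcal{B}(M/(E\setminus S))|$.

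I do not expect a serious obstacle; the whole argument amounts to degree bookkeeping combined with two short minor identities. The only genuinely non-formal step is $(M/B')|_S=M|_S$, which requires the submodularity computation sketched above. The deletion case is cleaner because a basis of $M\setminus S$ is by definition a maximal independent subset of $E\setminus S$, so the identity $(M/B'')|_S=M/(E\setminus S)$ is essentially built into the standard definition of the contraction.
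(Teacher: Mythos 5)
Your proof is correct and follows essentially the same route as the paper's: both identify the support of the initial (resp.\ leading) form of $h_M|_{x_i=1,\ i\in S}$ with the bases of $M/S$ (resp.\ $M\setminus S$) and then observe that every surviving monomial carries the same coefficient, namely $|\mathcal{B}(M|_S)|$ (resp.\ $|\mathcal{B}(M/(E\setminus S))|$); the only difference is that you prove the underlying basis-exchange facts directly via submodularity, where the paper simply cites Oxley's Cor.~3.1.8. (Minor point: as the lemma is stated, your constant should be the reciprocal, $c=1/|\mathcal{B}(M|_S)|$, which is immaterial since only existence of some constant is claimed.)
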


\begin{proof}
 We first prove $(1)$. By \cite[Cor.~3.1.8]{oxley} a subset $I\subseteq E\setminus S$ is a basis of $M/S$ if and only if there is a basis $B$ of $M$ such that $B\setminus I$ is a  basis of $M|_S$, i.e., a maximal independent subset of $S$. This shows that the support of $h_{M/S}$ agrees with the support of $f:=(h_M|_{x_i=1\textrm{ for }i\in S})_{\#}.$ It also follows from \cite[Cor.~3.1.8]{oxley} that the coefficient of each monomial in $f$ is the number of bases of $M|_S$.
 
 By the same argument we see that the support of $h_{M\setminus S}$ agrees with the support of $g:=(h_M|_{x_i=1\textrm{ for }i\in S})^{\#}$ and that the coefficient of every monomial in $g$ is the number of bases of $M/(E\setminus S)$. This shows $(2)$.
\end{proof}

We will now study some concepts related to the half-plane property that have been of recent interest.

\begin{Def}
  A real homogeneous polynomial $h\in\RR[x_1,\ldots,x_n]$ of degree $d$ is said to have a \emph{determinantal representation} if there are positive semi-definite matrices $A_{1},\ldots,A_{n}$ of size $d\times d$ such that 
  \[
    h=\det(x_{1}A_{1}+\cdots+x_{n}A_{n}).
  \]
  We say that $f$ is \emph{weakly determinantal} if $f^r$ has a determinantal representation for some suitable $r\in\NN$. A matroid is called \emph{weakly determinantal} if its basis generating polynomial is weakly determinantal.
\end{Def}

Every polynomial that has a determinantal representation is stable. The same is true for weakly determinantal polynomials. Furthermore, the basis generating polynomial of a matroid $M$ has a determinantal representation if and only if $M$ is a regular matroid \cite[\S 8]{Choe_2004}. The class of weakly determinantal matroids is less well understood. We know that it includes all $\sqrt[6]{1}$-matroids by \cite[\S 8]{Choe_2004}. The main result of \cite{Branden2011} says that the V\'amos matroid is not weakly determinantal although it has the half-plane property by \cite{Wagner_2009}. Further such matroids were found in \cite{Amini2018} and in \Cref{sec:classi} we will also give some new examples. Of course, being regular is a minor-closed property. More generally, we have:

\begin{thm}[Cor.~5.8 in \cite{Kummer-2013-hyperbolic-polynomials-interlacers}]\label{prm2}
 Let $f\in\RR[x_1,\ldots,x_n]$ be a multiaffine stable polynomial. If $f$ has a determinantal representation, then $\frac{\partial f}{\partial x_k}$ and $f|_{x_k=0}$ also have determinantal representations.
\end{thm}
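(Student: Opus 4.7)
The plan is to exploit a structural consequence of having a PSD determinantal representation of a multiaffine polynomial: each $A_i$ must have rank at most one. Once this reduction is in place, both conclusions follow from direct computations.

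Step 1 (rank-one reduction): I would first show that if $f=\det(\sum_i x_iA_i)$ is multiaffine with $A_i\succeq 0$, then each $A_i$ has rank at most one, so $A_i=v_iv_i^T$ for some $v_i\in\RR^d$. Writing $A_i=B_iB_i^T$ and concatenating $B=[B_1\mid\cdots\mid B_n]$, the Cauchy--Binet formula yields
\[
f=\sum_{|S|=d}\det(B_S)^2\prod_{i=1}^n x_i^{c_i(S)},
\]
where $c_i(S)$ counts how many columns of $B_i$ are selected by $S$. If some $A_i$ had rank $\geq 2$, one can pick two linearly independent columns of $B_i$ and extend them to a set $S$ of $d$ linearly independent columns of $B$ (possible since $B$ must have row rank $d$ for $f\not\equiv 0$). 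The corresponding term contributes $x_i^2$ with strictly positive coefficient, and no cancellation is possible because every summand is non-negative; this contradicts multiaffinity.

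Step 2 ($f|_{x_k=0}$): Substituting $x_k=0$ in the given representation gives
\[
f|_{x_k=0}=\det\Bigl(\sum_{i\neq k}x_iA_i\Bigr).
\]
Since $f$ is multiaffine and homogeneous of degree $d$, the restriction is either identically zero or again homogeneous of degree $d$, so this is a PSD determinantal representation of the required size $d\times d$.

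Step 3 ($\partial f/\partial x_k$): After an orthogonal change of basis in $\RR^d$ we may assume $v_k=c\cdot e_1$ for some $c\in\RR$, so that $A_k=c^2E_{11}$. Since $f$ depends linearly on $x_k$, the derivative equals $c^2$ times the $(1,1)$-cofactor of $M(x):=\sum_i x_iv_iv_i^T$. Writing $\bar v_i\in\RR^{d-1}$ for the vector $v_i$ with its first coordinate deleted (so $\bar v_k=0$), the $(1,1)$-cofactor equals $\det(\sum_{i\neq k}x_i\bar v_i\bar v_i^T)$. Absorbing $c^2\geq 0$ into one of the rank-one PSD summands yields the desired determinantal representation of size $(d-1)\times(d-1)$.

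The main obstacle is Step 1: one must argue carefully that the non-negativity of all contributions in Cauchy--Binet precludes cancellation of unwanted higher-degree monomials. Once this is done, the remaining steps reduce to the elementary cofactor identity after an orthogonal diagonalization of $A_k$. Degenerate cases ($f|_{x_k=0}\equiv 0$ or $v_k=0$) are handled trivially.
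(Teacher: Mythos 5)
Your argument is essentially correct, but note that the paper does not prove this statement at all: it is imported as Cor.~5.8 of \cite{Kummer-2013-hyperbolic-polynomials-interlacers}, where the result is obtained via the characterization (also quoted in the diagram at the end of Section~2) that a multiaffine stable polynomial has a determinantal representation if and only if every Rayleigh difference $\Delta_{ij}$ is a square of a polynomial; the closure under $\partial/\partial x_k$ and $x_k=0$ then follows by the same degree-comparison trick that the paper uses for its SOS-Rayleigh theorem ($\Delta_{ij}f=x_k^2\Delta_{ij}g+x_kp+\Delta_{ij}h$, so a square decomposes into squares). You instead give a direct matrix-theoretic proof: the rank-one reduction in your Step~1 is exactly the structural fact from \cite[\S 3]{Branden2011} that the paper invokes in \Cref{lem:gotofacemuaff}, and your Cauchy--Binet argument for it is sound (no cancellation since every coefficient is a sum of squares of maximal minors, and the column-extension is legitimate because $f\not\equiv 0$ forces $B$ to have full row rank). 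Steps~2 and~3 (restriction by substitution; Jacobi/cofactor expansion in the variable that occurs only in the $(1,1)$ entry) are also correct and yield representations of the right sizes $d\times d$ and $(d-1)\times(d-1)$. Your route is more elementary and self-contained; the route through Rayleigh differences is what generalizes to the ``weakly determinantal'' and spectrahedral settings treated later in the paper.

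One small repair in Step~3: you cannot absorb the factor $c^2$ into a \emph{single} rank-one summand, since rescaling one $\bar v_j\bar v_j^T$ only multiplies the monomials containing $x_j$, not the whole polynomial. Instead write $c^2\det\bigl(\sum_{i\neq k}x_i\bar v_i\bar v_i^T\bigr)=\det\bigl(\sum_{i\neq k}x_i\,\gamma\,\bar v_i\bar v_i^T\bigr)$ with $\gamma=c^{2/(d-1)}\geq 0$ (equivalently, rescale all $\bar v_i$ by $|c|^{1/(d-1)}$), which is again a representation by positive semidefinite rank-one matrices; the cases $c=0$ and $d=1$ are the trivial degenerate ones you already flag.
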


We will prove the analogous statement for weakly determinantal polynomials in \Cref{cor:weaklyclosed}.
The following characterization of multiaffine stable polynomials is due to Br\"and\'en.

\begin{thm}[Thm.~5.6 in \cite{Branden2007}]\label{thm:rayleighbrand}
  Let $h\in\RR[x_1,\ldots,x_n]$ be multiaffine. The following are equivalent:
  \begin{enumerate}
    \item  $h$ is stable,
    \item for all $1\leq i,j\leq n$, the \emph{Rayleigh difference}
    \[
      \Delta_{ij}(h):= \frac{\d{h}}{\d{x_{i}}}\cdot\frac{\d{h}}{\d{x_{j}}}
      -\frac{\d^{2}{h}}{\d{x_{i}}\d{x_{j}}}\cdot h
    \]
    is nonnegative on $\RR^{n}$.
  \end{enumerate}
\end{thm}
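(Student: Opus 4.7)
The plan is to prove the two implications separately. For the direction (1) $\Rightarrow$ (2) the argument reduces to the bivariate case. Fix arbitrary real values $a_k \in \RR$ for all indices $k \notin \{i, j\}$ and consider the restriction $\tilde h(x_i, x_j) := h(x)|_{x_k = a_k,\, k \neq i,j}$. Since every real number lies on the boundary of the open upper half-plane, a limit argument combined with Hurwitz's theorem shows that $\tilde h$ inherits stability from $h$. Writing $\tilde h = A x_i x_j + B x_i + C x_j + D$ with $A,B,C,D \in \RR$, a direct computation yields $\Delta_{ij}(\tilde h) = BC - AD$, which coincides with $\Delta_{ij}(h)$ evaluated at the chosen point. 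Solving $\tilde h = 0$ for $x_i$ gives $x_i = -(C x_j + D)/(A x_j + B)$; taking imaginary parts yields $\im(x_i) = -(BC - AD)\,\im(x_j)/|A x_j + B|^2$, so stability of $\tilde h$ forces $BC - AD \geq 0$.

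For the direction (2) $\Rightarrow$ (1) I would proceed by induction on $n$, the cases $n \leq 2$ being covered by the computation above. In the inductive step, decompose $h = h_0 + x_n h_1$ where $h_0 := h|_{x_n = 0}$ and $h_1 := \d h/\d x_n$ are both multiaffine in $x_1, \ldots, x_{n-1}$. The first task is to check that $h_0$ and $h_1$ each satisfy the Rayleigh condition in $x_1, \ldots, x_{n-1}$. Indeed, for $i, j < n$ one has $\Delta_{ij}(h_0) = \Delta_{ij}(h)|_{x_n = 0}$, while $\Delta_{ij}(h_1)$ is the coefficient of $x_n^2$ when $\Delta_{ij}(h)$ is viewed as a polynomial in $x_n$; both are nonnegative by hypothesis. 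By induction, $h_0$ and $h_1$ are stable. To conclude that $h = h_0 + x_n h_1$ is stable, invoke a Hermite--Biehler-type criterion for multivariate stability: it is enough to show that $h_0$ and $h_1$ are in \emph{proper position}, meaning $h_0 \overline{h_1} - h_1 \overline{h_0}$ has constant sign on the open upper polydisc. This proper position should be extracted from the remaining Rayleigh inequalities $\Delta_{kn}(h) \geq 0$ for $k < n$, which, after expansion, encode precisely the required Wronskian-type sign.

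The main obstacle is establishing this proper position, which requires passing from a real-variable inequality on $\RR^{n-1}$ to a zero-freeness statement on the complex upper polydisc. A clean route is via the Grace--Walsh--Szeg\H o coincidence theorem, which allows one to collapse several multiaffine variables into a single auxiliary variable $t$ and thus reduce the question to univariate stability, where the Hermite--Biehler criterion is literally the nonnegativity of the Rayleigh difference. Alternatively, one may argue by contradiction: if $h$ were not stable, a complex zero could be pushed to the real boundary by continuity, yielding a violation of one of the Rayleigh inequalities and contradicting the hypothesis. Either way, this passage from real Rayleigh inequalities to zero-freeness on the complex polydisc is the technical heart of Br\"and\'en's original argument, the rest being reduction to the bivariate case and bookkeeping of how the Rayleigh differences behave under the decomposition $h = h_0 + x_n h_1$.
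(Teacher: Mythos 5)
Your direction (1)$\Rightarrow$(2) is correct and complete (note, for the record, that the paper itself does not prove this theorem but cites Br\"and\'en for it): restricting the remaining variables to real values preserves stability up to the identically-zero case, the computation $\Delta_{ij}=BC-AD$ is right, and the imaginary-part formula for the solved root gives the sign. The reductions at the start of (2)$\Rightarrow$(1) are also fine: $\Delta_{ij}(h_0)=\Delta_{ij}(h)|_{x_n=0}$, and $\Delta_{ij}(h_1)$ is the $x_n^2$-coefficient of $\Delta_{ij}(h)$, which is nonnegative because a real quadratic in $x_n$ that is nonnegative on $\RR$ has nonnegative leading coefficient; so induction makes $h_0$ and $h_1$ stable (or zero, which are trivial cases).

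The genuine gap is the final step, which you explicitly defer: passing from the real inequalities $\Delta_{kn}(h)=h_1\,\frac{\partial h_0}{\partial x_k}-h_0\,\frac{\partial h_1}{\partial x_k}\geq 0$ on $\RR^{n-1}$ to the statement that $h_0,h_1$ are in proper position. By the same imaginary-part computation as in your part (1), proper position (i.e.\ $\im\bigl(h_0(x)\overline{h_1(x)}\bigr)$ having the right sign whenever all coordinates of $x$ lie in the open upper half-plane) is \emph{equivalent} to stability of $h_0+x_nh_1$, which is the polynomial $h$ you are trying to prove stable; so invoking a ``Hermite--Biehler-type criterion'' here is circular unless you actually prove the implication ``Wronskian inequalities at real points $+$ stability of $h_0,h_1$ $\Rightarrow$ proper position'', and that implication is the whole content of the hard direction (the Borcea--Br\"and\'en/Obreschkoff-type step). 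Neither of your proposed routes closes it as stated: Grace--Walsh--Szeg\H{o} applies to polynomials that are symmetric in the variables being collapsed, and you do not say which symmetrization reduces the present, non-symmetric situation to one variable; and the ``push a complex zero to the real boundary by continuity'' alternative fails outright, since stable polynomials may well vanish at real points, so a boundary zero contradicts no Rayleigh inequality. A correct completion needs a quantitative ingredient, e.g.\ the identity that the discriminant of $t\mapsto h(a+t(e_i+e_j))$ equals a square plus $4\Delta_{ij}(h)(a)$, which yields real-rootedness along the relevant directions and is one standard way to run the induction; mere continuity or an unspecified appeal to GWS does not suffice.
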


% \textbf{background on SOS Rayleigh...plays a role in computational section... saunderson article \cite{soshyperbolic}...}

\begin{Def}
 A multiaffine polynomial $h\in\RR[x_1,\ldots,x_n]$ is called \emph{SOS-Rayleigh} if for all $1\leq i,j\leq n$, the {Rayleigh difference}
    $\Delta_{ij}(h)$ is a sum of squares (of polynomials). A matroid is called \emph{SOS-Rayleigh} if its basis generating polynomial is SOS-Rayleigh.
\end{Def}

For us the notion of being SOS-Rayleigh is important for several reasons. It follows directly from the definition and \Cref{thm:rayleighbrand} that SOS-Rayleigh implies the half-plane property. However, it is easier to test whether a polynomial is a sum of squares than checking for global nonnegativity. Namely, the former can be done efficiently by running a semidefinite program \cite{2012-siamSDP}. Moreover, it was shown in \cite{Kummer-2013-hyperbolic-polynomials-interlacers} that being weakly determinantal implies SOS-Rayleigh. Via testing for the SOS-Rayleigh property we can thus find new examples of matroids with the half-plane property that are not weakly determinantal. Finally, polynomials with the half-plane property that are not SOS-Rayleigh are of interest in the context of Saunderson's approach to certifying the global nonnegativity of multivariate polynomials using \emph{hyperbolic programming} \cite{soshyperbolic}. We first show that in the multiaffine case being SOS-Rayleigh is preserved under taking derivatives.

\begin{thm}
 Let $f\in\RR[x_1,\ldots,x_n]$ be a multiaffine stable polynomial. If $f$ is SOS-Rayleigh, then $\frac{\partial f}{\partial x_k}$ and $f|_{x_k=0}$ are SOS-Rayleigh as well.
\end{thm}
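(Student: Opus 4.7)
The plan is to handle both $\partial f/\partial x_k$ and $f|_{x_k=0}$ simultaneously by exploiting the multiaffine decomposition
\[
f \;=\; x_k\, A + B, \qquad A := \partial f/\partial x_k, \quad B := f|_{x_k=0},
\]
in which $A$ and $B$ are both independent of $x_k$. For indices $i,j \in [n] \setminus \{k\}$, which are exactly the ones relevant for Rayleigh differences of $A$ and $B$, I would first express $\Delta_{ij}(f)$ in terms of $A$ and $B$.

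A short expansion using $\partial_i f = x_k\,\partial_i A + \partial_i B$ and the analogous formulas for $\partial_j f$ and $\partial_i\partial_j f$ yields
\[
\Delta_{ij}(f) \;=\; x_k^{\,2}\,\Delta_{ij}(A) \;+\; x_k\,R_{ij} \;+\; \Delta_{ij}(B)
\]
for some polynomial $R_{ij}$ that is independent of $x_k$. In particular $\Delta_{ij}(f)$ has degree at most $2$ in $x_k$, which is just a reflection of the multiaffinity of $f$.

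The main step is then the following general fact about sums of squares, which I expect to be the only point deserving a careful argument: if $p \in \RR[x_1,\ldots,x_n]$ is a sum of squares and has degree at most $2$ in the variable $x_k$, then the coefficient of $x_k^{\,2}$ and the coefficient of $x_k^{\,0}$ in $p$, regarded as polynomials in the remaining variables, are themselves sums of squares. The justification is to fix any SOS decomposition $p = \sum_m q_m^{\,2}$ and note that the leading $x_k$-coefficient of $\sum_m q_m^{\,2}$ equals $\sum_m(\text{leading $x_k$-coeff of }q_m)^2$, which cannot vanish unless every $q_m$ has $\deg_{x_k} q_m \le 1$; writing $q_m = b_m x_k + c_m$ with $b_m,c_m$ independent of $x_k$, one reads off that the coefficient of $x_k^{\,2}$ in $p$ equals $\sum_m b_m^{\,2}$ and the constant term of $p$ in $x_k$ equals $\sum_m c_m^{\,2}$.

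Applying this fact to $p = \Delta_{ij}(f)$, which is SOS by hypothesis, immediately gives that $\Delta_{ij}(A)$ and $\Delta_{ij}(B)$ are sums of squares for all $i,j \neq k$, and this is precisely the SOS-Rayleigh property for $\partial f/\partial x_k = A$ and $f|_{x_k=0} = B$. Everything beyond the coefficient-extraction lemma is a direct computation.
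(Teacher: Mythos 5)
Your proposal is correct and follows essentially the same route as the paper: expand $\Delta_{ij}(f)$ as a quadratic in $x_k$ with $x_k^2$-coefficient $\Delta_{ij}(\partial f/\partial x_k)$ and constant term $\Delta_{ij}(f|_{x_k=0})$, then note that in any SOS decomposition each square has $x_k$-degree at most $1$, so the two extreme coefficients are themselves sums of squares. Your coefficient-extraction lemma is just a slightly more explicit version of the paper's "comparing degrees" step, so there is nothing to add.
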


\begin{proof}
 Let $g=\frac{\partial f}{\partial x_k}$ and $h=f|_{x_k=0}$. One calculates $$\Delta_{ij}f=x_k^2\cdot\Delta_{ij}g+x_k\cdot p+\Delta_{ij}h$$for some $p\in\RR[x_1,\ldots,x_n]$ that does not depend on $x_k$. By assumption $\Delta_{ij}f=\sum_{l=1}^ms_l^2$ is a sum of squares. Comparing degrees on both sides we get that $s_l=a_lx_k+b_l$ for some polynomials $a_l,b_l$ that do not depend on $x_k$. It follows that $\Delta_{ij}g=\sum_{l=1}^ma_l^2$ and $\Delta_{ij}h=\sum_{l=1}^mb_l^2$.
\end{proof}

\begin{cor}
 The class of SOS-Rayleigh matroids is minor-closed.
\end{cor}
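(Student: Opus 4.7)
The plan is to deduce the corollary directly from the theorem just proved, by observing that every minor of $M$ has a basis generating polynomial that arises from $h_M$ by a sequence of the two operations handled in that theorem.

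More precisely, I would proceed as follows. Fix a matroid $M$ with $h_M$ SOS-Rayleigh, and let $N$ be a minor of $M$. By the standard one-element-at-a-time description of minors recalled just before \Cref{lem:polysofminors}, there is a sequence of matroids $M=M_0,M_1,\ldots,M_k=N$ such that each $M_{\ell+1}$ is obtained from $M_\ell$ by either a single-element deletion or a single-element contraction, and the corresponding $h_{M_{\ell+1}}$ is obtained from $h_{M_\ell}$ by one of the following operations applied to some variable $x_i$: (a) setting $x_i=0$, (b) taking $\partial/\partial x_i$, or (c) doing nothing (the latter occurring when $i$ is a loop upon contraction). Each operation preserves multiaffineness, and also preserves stability (setting a variable to a real number preserves stability, and for multiaffine polynomials the partial derivative is the same as specialization up to a stable factor; alternatively this is \cite[\S 4.1]{Choe_2004}).

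Thus by induction each intermediate $h_{M_\ell}$ is multiaffine and stable, so the theorem just proved applies and shows that each step preserves the SOS-Rayleigh property. Therefore $h_N=h_{M_k}$ is SOS-Rayleigh, proving that the class of SOS-Rayleigh matroids is closed under minors.

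There is no real obstacle here; the only subtlety is that the theorem requires its input to be multiaffine and stable, which must be verified along the sequence of intermediate polynomials, but both properties are inherited under deletion and contraction, so the induction goes through cleanly.
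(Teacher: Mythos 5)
Your argument is correct and is exactly the paper's intended route: the corollary is stated as an immediate consequence of the preceding theorem, using the one-element-at-a-time description of $h_{M\setminus\{i\}}$ and $h_{M/\{i\}}$ (substitution $x_i=0$, partial derivative, or no change) recalled in Section~\ref{C}, with multiaffineness automatic for basis generating polynomials and stability along the chain guaranteed since the half-plane property is minor-closed. Nothing essential differs from the paper's reasoning.
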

Let $h\in\RR[x_{1},\dots,x_{n}]$ be a homogeneous multiaffine polynomial. We can summarize the relations between the properties we defined so far as follows:
\begin{center}
\begin{tikzpicture}
  \node(a) at (0,0){$\begin{array}{c} h \text{ is weakly} \\ \text{determinantal}\\ \end{array}$};
  \node(b) at (1.8,0){$\implies$};
  \node(c) at (3.5,0){$h$ has HPP};
  \node(d) at (5,0){$\iff$};
  \node(e) at (7.5,0){$\begin{array}{c} h \text{ is hyperbolic} \\ \text{with resp. to all }e\in \RR^{n}_{\geq 0}\\ \end{array}$};

  \node[rotate=270](f) at (3.5,-1){$\iff$};
  \node[right](i) at (3.5,-1){\cite{Branden2007}};
  \node(g) at (3.5,-2){$\begin{array}{c} \Delta_{ij} h \geq 0 \\ \text{for all }i,j\in\left[ n \right]\\ \end{array}$};
  %\node[rotate=90](h) at (-0.5,-1){$\centernot\implies$};
  \node[rotate=270](i) at (0,-1){$\implies$};
  \node[right](i) at (0,-1){\cite{Kummer-2013-hyperbolic-polynomials-interlacers}};
  \node(j) at (-0.5,-2){$h$ is SOS-Rayleigh};
  \node[rotate=270](k) at (3.5,1){$\implies$};
  \node(l) at (3.5,2){$\begin{array}{c} h \text{ has a determinantal} \\ \text{representation}\\ \end{array}$};
  \node[rotate=230](m) at (1.2,1.2){$\implies$};
  \node(n) at (6,2){$\iff$};
   \node[above](n) at (6,2){\cite{Kummer-2013-hyperbolic-polynomials-interlacers}};
  \node(p) at (8,2){$\begin{array}{c} \Delta_{ij} h \text{ is a square }\\ \text{for all }i,j\in\left[ n \right]\\ \end{array}$};
  \node(r) at (1.5,-2){$\implies$};
  \end{tikzpicture}
\end{center}

\section{Hyperbolic polynomials and spectrahedra}\label{sec:hypspec}

\begin{Def}
  A polynomial $h$ is called \emph{hyperbolic} with respect to $e\in \RR^{n}$ if $h(e)\neq 0$ and  for all $v\in \RR^{n}$, the univariate polynomial $h(v-te)\in \RR[t]$ has only real roots.
 The hyperbolicity cone of $h$ at $e$ is
  \[
    C_{h}(e)=\left\{ v\in \RR^{n}: h(v-te)=0 \Rightarrow t\in \RR_{\geq 0} \right\}.
  \]
\end{Def}

Every homogeneous stable polynomial $f\in\RR[x_1,\ldots,x_n]$ is hyperbolic with respect to the all-ones vector and one can study the associated hyperbolicity cone. One can show that it always contains the positive orthant. Conversely, if  the hyperbolicity cone of a hyperbolic polynomial contains the postive orthant, then it is stable.
Moreover, if the non-zero coefficients of a homogeneous polynomial have no sign variation, then being hyperbolic with respect to the all-ones vector implies that the associated hyperbolicity cone contains the positive orthant.  
  In particular, the basis generating polynomials of a matroid is stable if and only if it is hyperbolic with respect to the all-ones vector.

\begin{Def}
  A \emph{spectrahedral cone} is a cone of the form
  \[
    C=\left\{v\in \RR^{n}: A(v)=v_{1}A_{1}+\ldots+ v_{n}A_{n}\succeq 0\right\}
  \]
  for some real symmetric matrices $A_{1},\ldots, A_{n}$ of size $d\times d$, where $A(v)\succeq 0$ means that $A(v)$ is positive semi-definite. We say that a matroid is \emph{spectrahedral} if it has the half-plane property and the hyperbolicity cone at the all-ones vector of its basis generating polynomial is a spectrahedral cone.
\end{Def}

Recall the following criterion for a hyperbolicity cone to be spectrahedral.

\begin{thm}[Thm.~2.1 in \cite{Vinnikov_2012}]\label{mariospec}
 Let $h\in \RR\left[x_{1},\dots,x_{n}\right]$ be a homogeneous stable polynomial and $e=(1,\ldots,1)$ the all-ones vector. The hyperbolicity cone $C_{h}(e)$ is spectrahedral if and only if there is another homogeneous stable polynomial $g\in \RR \left[x_{1},\dots,x_{n}\right]$, such that $C_{h}(e)\subseteq C_{g}(e)$ and $h\cdot g$ has a determinantal representation.
\end{thm}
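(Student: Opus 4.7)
The plan is to prove the two directions of the biconditional separately and to identify a single polynomial divisibility step as the main obstacle.

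For the $(\Leftarrow)$ direction, suppose $g$ is a homogeneous stable polynomial with $C_h(e)\subseteq C_g(e)$ and $h\cdot g = \det L(x)$, where $L(x) = \sum_{i=1}^n x_i A_i$ with $A_i\succeq 0$. Then $L(e) = \sum A_i\succeq 0$, and since $\det L(e) = h(e)g(e)\neq 0$ (both factors being nonvanishing at $e$ by hyperbolicity), we actually have $L(e)\succ 0$. The standard fact that the hyperbolicity cone at $e$ of $\det L$ with $L(e)\succ 0$ equals the spectrahedron $\{v: L(v)\succeq 0\}$, combined with the identity $C_{hg}(e) = C_h(e)\cap C_g(e)$ (roots of $hg$ along $v-te$ are the union of roots of $h$ and $g$) and the hypothesis $C_h(e)\subseteq C_g(e)$, then yields $C_h(e) = \{v: L(v)\succeq 0\}$, which is spectrahedral.

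For the $(\Rightarrow)$ direction, write $C_h(e) = \{v: L(v) = \sum v_iA_i\succeq 0\}$ with $A_i$ real symmetric. Since every homogeneous stable polynomial's hyperbolicity cone at the all-ones vector contains the positive orthant, each standard basis vector $e_i$ belongs to $C_h(e)$, so $A_i = L(e_i)\succeq 0$ for free; thus the positive semidefiniteness required by the definition of a determinantal representation comes automatically. Set $p := \det L$. Since $e$ lies in the interior of $C_h(e)$ one has $L(e)\succ 0$, hence $p$ is hyperbolic at $e$ with hyperbolicity cone precisely $\{v: L(v)\succeq 0\} = C_h(e)$. The crucial step is to show that $h$ divides $p$ in $\RR[x_1,\ldots,x_n]$. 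The idea is that on a generic line $t\mapsto v-te$ through $e$, both $h(v-te)$ and $p(v-te)$ are real-rooted in $t$, with zero sets parametrising crossings of $\partial C_h(e)$; since $h$ is essentially the minimal polynomial cutting out the real algebraic boundary of the common hyperbolicity cone $C_h(e)$, every real irreducible factor of $h$ must appear in $p$ with at least its multiplicity in $h$. This is precisely the content of Vinnikov's theorem and it requires genuine input from real algebraic geometry (e.g., a real Nullstellensatz-type argument or a careful real-root multiplicity count along generic lines).

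Once $p = h\cdot g$ with $g\in\RR[x_1,\ldots,x_n]$ homogeneous, the remaining steps are formal. Stability of $p$ follows directly from $A_i\succeq 0$: for any nonzero $y$, the linear form $y^\top L(x) y = \sum x_i(y^\top A_i y)$ has nonnegative coefficients not all zero, so has positive imaginary part whenever $\mathrm{Im}(x_i)>0$ for all $i$. Since factors of nonzero stable polynomials are stable, $g = p/h$ is stable as well. The identity $C_p(e) = C_h(e)\cap C_g(e)$ combined with $C_p(e) = C_h(e)$ yields $C_h(e)\subseteq C_g(e)$, completing the argument. The main obstacle is thus the divisibility claim $h\mid\det L$; everything else is bookkeeping about hyperbolicity cones and standard properties of stable polynomials.
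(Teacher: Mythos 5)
First, note that the paper itself does not prove this statement: it is quoted as Theorem~2.1 of Vinnikov's survey \cite{Vinnikov_2012}, so there is no internal proof to compare your argument with. Your $(\Leftarrow)$ direction is correct and is the standard argument: $L(e)\succ0$ from $A_i\succeq0$ and $h(e)g(e)\neq0$, the hyperbolicity cone of $\det L$ at $e$ is the spectrahedron $\{v:L(v)\succeq0\}$, and $C_{hg}(e)=C_h(e)\cap C_g(e)=C_h(e)$.

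The gap is in the $(\Rightarrow)$ direction. You reduce it to the claim that $h$ divides $p=\det L$ for the given definite pencil $L$ with $C_h(e)=\{v:L(v)\succeq0\}$, and then you defer precisely this claim to ``Vinnikov's theorem''; since producing the cofactor $g$ is the entire content of the forward implication, this is circular and the hard direction remains unproved. Moreover, the divisibility claim as you state it is false. Two counterexamples within the hypotheses: (i) $h=x_1^2\in\RR[x_1,x_2]$ is homogeneous and stable with $C_h(e)=\{v:v_1\geq0\}=\{v:(v_1)\succeq0\}$, but $x_1^2\nmid x_1$; (ii) $h=(x_1x_2+x_1x_3+x_2x_3)\,x_1x_2x_3$ is stable with $C_h(e)$ equal to the closed positive orthant, represented by $\operatorname{diag}(x_1,x_2,x_3)$, yet the irreducible quadratic factor divides no power of $x_1x_2x_3$: it vanishes on the boundary of the orthant only along the coordinate axes, i.e.\ in codimension two. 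So the heuristic that ``$h$ is essentially the minimal polynomial cutting out the real algebraic boundary of $C_h(e)$'' breaks down in exactly the two ways that matter: factors of $h$ can appear with the wrong multiplicity on $\partial C_h(e)$, and $h$ can have factors that are invisible in codimension one on $\partial C_h(e)$ altogether. Consequently no real-root multiplicity count along generic lines can extract $g$ from the given pencil by division; the proof must instead construct a new (generally larger) definite pencil, equivalently construct $g$ directly --- in example (ii), for instance, one may pass to $\operatorname{diag}(x_1,x_2,x_3)\oplus Q$ with $Q$ a $2\times2$ definite pencil whose determinant is the quadric. As it stands, your proposal establishes only the easy implication together with correct but routine bookkeeping (stability of factors, $C_p(e)=C_h(e)\cap C_g(e)$, $A_i=L(e_i)\succeq0$).
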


\begin{lem}\label{rootsfg}
  Let $f$, $g$ be two polynomials that are hyperbolic with respect to $e\in\RR^{n}$. For $v\in \RR^{n}$ let $g_{\mathrm{min}}(v)$ be the smallest root of $g(te-v)$ and $f_{\mathrm{min}}(v)$ be the smallest root of $f(te-v)$ respectively. Then we have
  $f_{\mathrm{min}}(v)\leq g_{\mathrm{min}}(v)$ for all $v\in\RR^n$ if and only if $C_{f}(e)\subseteq C_{g}(e)$.
\end{lem}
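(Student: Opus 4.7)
The plan is to prove the lemma by first establishing a single, more basic equivalence, from which both directions follow at once. The equivalence I want is: for any $h$ hyperbolic with respect to $e$, and any $v\in\RR^n$, $s\in\RR$,
\[
v - se \in C_h(e) \quad\Longleftrightarrow\quad s \leq h_{\mathrm{min}}(v).
\]
Here one should first reconcile the two sign conventions used in the excerpt: the hyperbolicity cone is defined via roots of $h(v-te)$, while $h_{\mathrm{min}}(v)$ is defined as the smallest root of $h(te-v)$. In the setting of interest $h$ is homogeneous, so $h(te-v) = (-1)^{\deg h}h(v-te)$ as polynomials in $t$, and the two root sets (with multiplicities) coincide. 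Thus $h_{\mathrm{min}}(v)$ can equivalently be read as the smallest root of $h(v-te)$.

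To prove the auxiliary equivalence, I would observe that the roots in $t$ of $h\bigl((v-se)-te\bigr) = h\bigl(v-(s+t)e\bigr)$ are precisely the translates $t'-s$, where $t'$ runs over the roots of $h(v-te)$. By the definition of $C_h(e)$, the point $v-se$ lies in the hyperbolicity cone exactly when every such $t'-s$ is nonnegative, i.e.\ when $s \leq t'$ for every root $t'$ of $h(v-te)$, i.e.\ when $s \leq h_{\mathrm{min}}(v)$.

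Given this, both directions of the lemma are immediate. For $(\Leftarrow)$, assume $C_f(e)\subseteq C_g(e)$, take an arbitrary $v\in\RR^n$ and set $s=f_{\mathrm{min}}(v)$; the equivalence applied to $f$ gives $v-se\in C_f(e)\subseteq C_g(e)$, and then the equivalence applied to $g$ yields $s \leq g_{\mathrm{min}}(v)$, i.e.\ $f_{\mathrm{min}}(v)\leq g_{\mathrm{min}}(v)$. For $(\Rightarrow)$, assume $f_{\mathrm{min}}\leq g_{\mathrm{min}}$ pointwise, take $v\in C_f(e)$, and use the equivalence with $s=0$: one gets $0\leq f_{\mathrm{min}}(v)\leq g_{\mathrm{min}}(v)$, whence $v\in C_g(e)$.

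There is no real obstacle; the only care required is bookkeeping with the two sign conventions in the definitions of $C_h(e)$ and $h_{\mathrm{min}}$, which is why establishing the auxiliary equivalence cleanly at the start is worthwhile.
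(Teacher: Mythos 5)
Your proof is correct and takes essentially the same route as the paper's: both arguments rest on the observation that $v-se\in C_h(e)$ exactly when $s\le h_{\mathrm{min}}(v)$ (obtained by shifting $v$ along the direction $e$), applied with $s=f_{\mathrm{min}}(v)$ for one direction and $s=0$ for the other. Your explicit handling of the sign convention between $h(te-v)$ and $h(v-te)$, which the paper passes over silently, is a minor but welcome refinement rather than a difference in approach.
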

  
\begin{proof}
  $\Leftarrow:$ Assume that $C_{f}(e)\subseteq C_{g}(e)$.
  Then for all $v\in\RR^{n}$ the smallest root of $$f((t+f_{\textrm{min}}(v)) e-v)$$ is zero. Thus $f(te-(-f_{\textrm{min}}(v)e+v))$ has only non-negative zeros. By definition of $C_f(e)$ this implies that $-f_{\textrm{min}}(v)e+v\in C_{f}(e)\subseteq C_{g}(e)$. 
  By the definition of $C_{g}(e)$ this shows that $g(te-(-f_{\textrm{min}}(v)e+v))=g((t+f_{\textrm{min}}(v))e-v)$ has only non-negative zeros. Thus $f_{\textrm{min}}(v)\leq g_{\textrm{min}}(v)$.
  
  $\Rightarrow:$  For $v\in C_{f}(e)$ we have by definition that $f_{\textrm{min}}(v)\geq 0$. If $f_{\textrm{min}}(v)\leq g_{\textrm{min}}(v)$, then $g_{\textrm{min}}(v)\geq0$ and thus $v\in C_{g}$.
\end{proof}

\begin{lem}\label{limitcont}
 Let $(h_{i})_{i\in\NN}, (g_{i})_{i\in\NN}$ two sequences of polynomials that are hyperbolic with respect to $e\in\RR^{n}$. Assume that we have $h=\lim_{i\rightarrow \infty}h_{i}$ and $g=\lim_{i\rightarrow \infty}g_{i}$ for polynomials $g,h$ that are hyperbolic with respect to $e$. If $C_{h_{i}}(e)\subseteq C_{g_{i}}(e)$ for all $i\in\NN$, then $C_{h}(e)\subseteq C_{g}(e)$.
\end{lem}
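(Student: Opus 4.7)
The plan is to translate both hypothesis and conclusion into statements about the smallest root of the univariate restrictions via Lemma \ref{rootsfg}, and then exploit continuity of roots. First, by Lemma \ref{rootsfg}, the containment $C_{h_i}(e)\subseteq C_{g_i}(e)$ is equivalent to the pointwise inequality $(h_i)_{\mathrm{min}}(v)\leq (g_i)_{\mathrm{min}}(v)$ for all $v\in\RR^n$, where for a polynomial $p$ hyperbolic with respect to $e$, $p_{\mathrm{min}}(v)$ denotes the smallest real root of $p(te-v)\in\RR[t]$. The goal $C_h(e)\subseteq C_g(e)$ is, by the same lemma, $h_{\mathrm{min}}(v)\leq g_{\mathrm{min}}(v)$ for all $v$, so it suffices to pass to the limit in the inequalities for $h_i$ and $g_i$.

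The key step is therefore: for each fixed $v\in\RR^n$, I would show $(h_i)_{\mathrm{min}}(v)\to h_{\mathrm{min}}(v)$ and analogously for $g$. Coefficientwise convergence $h_i\to h$ gives coefficientwise convergence $h_i(te-v)\to h(te-v)$ as polynomials in $t$. Because $h$ is hyperbolic with respect to $e$ one has $h(e)\neq 0$, and in the homogeneous setting of the paper $h(e)$ is exactly the coefficient of the top power of $t$ in $h(te-v)$; hence this leading coefficient is nonzero and is the limit of the analogous leading coefficients of $h_i(te-v)$, which are therefore nonzero for all sufficiently large $i$, with no drop in degree. Since each $h_i(te-v)$ has only real roots by hyperbolicity, the standard continuity of roots of univariate polynomials under coefficientwise convergence (roots are a continuous multiset-valued function when the leading coefficient is fixed away from zero) implies that the ordered multiset of real roots of $h_i(te-v)$ converges entrywise to that of $h(te-v)$. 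In particular $(h_i)_{\mathrm{min}}(v)\to h_{\mathrm{min}}(v)$, and the same argument applies to $g$.

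Having established continuity of the smallest root, passing $i\to\infty$ in $(h_i)_{\mathrm{min}}(v)\leq (g_i)_{\mathrm{min}}(v)$ yields $h_{\mathrm{min}}(v)\leq g_{\mathrm{min}}(v)$ for every $v\in\RR^n$. A second application of Lemma \ref{rootsfg}, now in the forward direction, gives $C_h(e)\subseteq C_g(e)$, which is the desired conclusion. The main obstacle is the continuity step: in principle, roots of a sequence of polynomials could escape to $\pm\infty$ if the degree drops in the limit, which would ruin the control of the smallest root. It is precisely the condition $h(e)\neq 0$ (together with the analogous condition for $g$) embedded in the hyperbolicity hypothesis on the limits that rules this out and makes the argument go through.
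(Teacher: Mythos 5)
Your proof is correct and is essentially the paper's own argument: both reduce the cone containments to the pointwise inequality of smallest roots via Lemma~\ref{rootsfg}, pass to the limit using continuity of the roots of a univariate polynomial in its coefficients, and then apply Lemma~\ref{rootsfg} again. The only difference is that you spell out the leading-coefficient/no-degree-drop justification (via $h(e)\neq0$ in the homogeneous setting) which the paper leaves implicit.
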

  
\begin{proof}
 For every $v\in\RR^n$ and $i\in\NN$ the smallest root of $h_i(te-v)$ is at most the smallest root of $g_i(te-v)$ by \Cref{rootsfg}. Since the roots of a univariate polynomial depend continuously on the coefficients, we have the same for $h$ and $g$. Thus, again by \Cref{rootsfg} $C_{h}(e)\subseteq C_{g}(e)$.
\end{proof} 

\begin{lem}\label{lem:lemyk}
 Let $h\in \RR\left[x_{1},\dots,x_{n}\right]$ be a homogeneous polynomial that does not depend on $x_{k}$. Let $h':=x_{k}^mh$ for a positive integer $m$, and assume that $h'$ (and thus $h$) is hyperbolic with respect to $e\in\RR^n$.
 Then $$C_{h}(e)=\{v\in\RR^n:\,  v+\lambda e_k\in C_{h'}(e) \text{ for some }\lambda\in\RR\}.$$
\end{lem}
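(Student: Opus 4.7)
The plan is to unwind both definitions directly by working with the univariate polynomials $h(te - w)$ and $h'(te - w)$ in $t$. From $h' = x_k^m h$ one immediately obtains the factorization
$$h'(te - w) = \bigl(t\,(e)_k - w_k\bigr)^m \cdot h(te - w),$$
where $(e)_k$ denotes the $k$-th coordinate of $e$. The assumption $0 \neq h'(e) = (e)_k^m\, h(e)$ forces $(e)_k \neq 0$, a fact used in the forward inclusion below.

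The next step is the key observation of translation invariance: because $h$ does not depend on $x_k$, the vectors $te - v$ and $te - v - \lambda e_k$ agree in every coordinate $j \neq k$, so $h(te - v - \lambda e_k) = h(te - v)$ as polynomials in $t$ for every $\lambda \in \RR$. Hence $C_h(e)$ is itself invariant under translation by multiples of $e_k$.

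For the inclusion ``$\supseteq$'', if $v + \lambda e_k \in C_{h'}(e)$ then every root of $h(te - v) = h(te - v - \lambda e_k)$ is also a root of $h'(te - v - \lambda e_k)$, hence nonnegative, so $v \in C_h(e)$. For the inclusion ``$\subseteq$'', given $v \in C_h(e)$ the polynomial $h(te - v)$ has only nonnegative roots; choosing $\lambda$ so that $(v_k + \lambda)/(e)_k \geq 0$ (possible because $(e)_k \neq 0$) makes both factors in
$$h'(te - v - \lambda e_k) = \bigl(t\,(e)_k - v_k - \lambda\bigr)^m \cdot h(te - v)$$
have only nonnegative roots, so $v + \lambda e_k \in C_{h'}(e)$.

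The argument is essentially a direct unwinding of the definitions, so there is no real obstacle; the one point requiring care is the deduction that $(e)_k \neq 0$ from $h'(e) \neq 0$, without which the linear factor would vanish identically and one could not adjust $\lambda$ in the forward inclusion.
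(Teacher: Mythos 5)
Your proof is correct and takes essentially the same approach as the paper: factor $h'(te-w)=(te_k-w_k)^m\,h(te-w)$, use that $h$ does not depend on $x_k$ to translate along $e_k$, and deduce $e_k\neq 0$ from $h'(e)\neq 0$. The only cosmetic difference is that the paper fixes $\lambda=-v_k$ (so the extra factor becomes $c\,t^m$) whereas you allow any $\lambda$ with $(v_k+\lambda)/e_k\geq 0$, which changes nothing essential.
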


\begin{proof}
 We have $C_{h'}(e)=C_h(e)\cap C_{x_k}(e)\subseteq C_{h}(e)$. For every $v\in C_{h}(e)$ we have $v+\lambda e_k\in C_{h}(e)$ since $h$ does not depend on $x_k$. This shows the inclusion $\supset$.

 On the other hand, if $v\in C_{h}(e)$, then for $\lambda=-v_k$, the polynomial $h'(te-(v+\lambda e_k))$ equals $c\cdot t^m\cdot h(te-v)$ for some non-zero constant $c\in\RR$ and thus has only nonnegative roots. This shows the other inclusion. 
\end{proof}

\begin{cor}\label{lemxk}
  Let $h_1,h_2\in \RR\left[x_{1},\dots,x_{n}\right]$ be homogeneous polynomials that do not depend on $x_{k}$. Let $h_i':=x_{k}^{m_i}h$ and assume that $h_i'$ (and thus $h_i$) is hyperbolic with respect to $e\in\RR^n$ for $i=1,2$. If $C_{h_1'}(e)\subseteq C_{h_2'}(e)$, then $C_{h_1}(e)\subseteq C_{h_2}(e)$.
\end{cor}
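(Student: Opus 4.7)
The plan is to deduce this corollary directly from the previous lemma (\Cref{lem:lemyk}), which characterizes the hyperbolicity cone of a polynomial independent of $x_k$ as the $e_k$-saturation of the hyperbolicity cone of the $x_k^m$-multiple. The containment $C_{h_1'}(e)\subseteq C_{h_2'}(e)$ should translate, under this saturation, to the containment $C_{h_1}(e)\subseteq C_{h_2}(e)$.

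Concretely, I would take an arbitrary $v\in C_{h_1}(e)$ and apply \Cref{lem:lemyk} to $h_1$: there exists $\lambda\in\RR$ with $v+\lambda e_k\in C_{h_1'}(e)$. By the hypothesis $C_{h_1'}(e)\subseteq C_{h_2'}(e)$, we conclude $v+\lambda e_k\in C_{h_2'}(e)$. Then applying \Cref{lem:lemyk} in the other direction to $h_2$ (which also does not depend on $x_k$), the existence of such a $\lambda$ shows $v\in C_{h_2}(e)$. This closes the argument.

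There is essentially no obstacle here; the corollary is a formal consequence of the saturation description in \Cref{lem:lemyk}. The only point to double-check is that the hypotheses of \Cref{lem:lemyk} are met for both $h_1$ and $h_2$, which is built into the corollary's assumptions (homogeneity, independence from $x_k$, and hyperbolicity of $h_i'$ with respect to $e$). No further tools beyond \Cref{lem:lemyk} itself are needed, so the write-up can be kept to a couple of lines.
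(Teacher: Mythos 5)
Your proposal is correct and is exactly the paper's argument: the paper proves this corollary in one line by invoking the description of $C_{h_i}(e)$ from \Cref{lem:lemyk}, and your write-up simply spells out that same deduction (pass from $v$ to $v+\lambda e_k$, use the hypothesis $C_{h_1'}(e)\subseteq C_{h_2'}(e)$, and return via \Cref{lem:lemyk} for $h_2$). No issues.
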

  
\begin{proof}
 This follows from the description of $C_{h_i}(e)$ given in \Cref{lem:lemyk}.
\end{proof}

\begin{cor}\label{cor:sharpcones}
 Let $h_1,h_2\in \RR\left[x_{1},\dots,x_{n}\right]$ be homogeneous stable polynomials and $e=(1,\ldots,1)$ the all-ones vector. Let $C_{h_1}(e)\subseteq C_{h_2}(e)$ and $h_i'=(h_i|_{x_k=1})_\#$ for $i=1,2$. Then $C_{h_1'}(e)\subseteq C_{h_2'}(e)$.
\end{cor}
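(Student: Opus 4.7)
My plan is to realize $x_k^{m_i} h_i'$ as the face polynomial of $h_i$ corresponding to the face of $\newt(h_i)$ maximizing the $k$-th coordinate, approach it as a limit of coordinate-rescaled copies of $h_i$, and then combine \Cref{prop:facialstable}, \Cref{limitcont} and \Cref{lemxk}.

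Unpacking the definition of $h_i'$ first: write $h_i = \sum_{j=0}^{m_i} x_k^j\, p_{i,j}$, where $p_{i,j}$ is a form in the variables $x_\ell$ with $\ell\neq k$, of degree $\deg h_i - j$, and $m_i$ is the largest power of $x_k$ that appears in $h_i$. Then $h_i|_{x_k=1}=\sum_j p_{i,j}$, whose lowest-degree summand is $p_{i,m_i}$, so $h_i'=p_{i,m_i}$. In particular $x_k^{m_i}h_i'$ is the sum of those terms of $h_i$ whose exponent vectors lie in the face of $\newt(h_i)$ that maximizes the $k$-th coordinate, so by \Cref{prop:facialstable} it is stable, hence hyperbolic with respect to $e$.

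Now I would imitate the scaling argument used in the proof of \Cref{prop:facialstable}. For $\epsilon>0$ set
$$h_i^\epsilon(x):=\epsilon^{m_i}h_i(x_1,\ldots,\epsilon^{-1}x_k,\ldots,x_n)=\sum_{j=0}^{m_i}\epsilon^{m_i-j}x_k^j p_{i,j}.$$
Because $\epsilon^{-1}>0$, substituting $\epsilon^{-1}x_k$ for $x_k$ preserves stability, as does scaling by a positive constant, so each $h_i^\epsilon$ is stable, and clearly $h_i^\epsilon\to x_k^{m_i}h_i'$ coefficient-wise as $\epsilon\to 0$. Writing $\phi_\epsilon(x)=(x_1,\ldots,\epsilon^{-1}x_k,\ldots,x_n)$, one checks directly from the definition of hyperbolicity cones that
$$C_{h_i^\epsilon}(e)=\phi_\epsilon^{-1}\bigl(C_{h_i}(\phi_\epsilon(e))\bigr).$$
Since $\phi_\epsilon(e)=(1,\ldots,\epsilon^{-1},\ldots,1)$ lies in the open positive orthant, which in turn lies in $C_{h_i}(e)$ by stability, the cone $C_{h_i}(\phi_\epsilon(e))$ coincides with $C_{h_i}(e)$; hence the hypothesis $C_{h_1}(e)\subseteq C_{h_2}(e)$ transports to $C_{h_1^\epsilon}(e)\subseteq C_{h_2^\epsilon}(e)$ for every $\epsilon>0$.

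Letting $\epsilon\to 0$ along a sequence and invoking \Cref{limitcont} yields $C_{x_k^{m_1}h_1'}(e)\subseteq C_{x_k^{m_2}h_2'}(e)$. Since neither $h_1'$ nor $h_2'$ involves $x_k$, \Cref{lemxk} applied with $h_i'$ playing the role of its $h_i$ and $x_k^{m_i}h_i'$ playing the role of its $h_i'$ strips off the monomial factors and gives the desired inclusion $C_{h_1'}(e)\subseteq C_{h_2'}(e)$. Aside from bookkeeping, the only subtle point is the identity $C_{h_i^\epsilon}(e)=\phi_\epsilon^{-1}(C_{h_i}(e))$, which I expect to be the main place to argue carefully; it hinges on the fact that the positive dilation $\phi_\epsilon$ keeps $e$ inside the hyperbolicity chamber of $h_i$, which is immediate here.
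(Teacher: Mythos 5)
Your proof is correct and follows essentially the same route as the paper: up to the positive factor $\epsilon^{m_i}$ and homogeneity, your rescaling of $x_k$ by $\epsilon^{-1}$ is exactly the paper's map $T_\gamma$ (which scales all the other variables by $\gamma$ and normalizes by $\gamma^{-r_i}$), your identification $C_{h_i^\epsilon}(e)=\phi_\epsilon^{-1}\bigl(C_{h_i}(\phi_\epsilon(e))\bigr)$ together with the invariance of the hyperbolicity cone when the direction moves inside the cone is the same use of G\r{a}rding's theorem the paper cites, and the finish via \Cref{limitcont} and \Cref{lemxk} is identical. The only cosmetic caveat is to note explicitly (as the paper does) that the coincidence $C_{h_i}(\phi_\epsilon(e))=C_{h_i}(e)$ is G\r{a}rding's theorem applied to the interior point $\phi_\epsilon(e)$ of the cone.
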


\begin{proof}
  Let $d_i$ be the degree of $h_i$ and $r_i$ be the smallest degree of a monomial of $h_i|_{x_{k}=1}$. For $\gamma>0$ let $T_\gamma$ the linear map defined by
  \[
   T_\gamma(x_1,\ldots,x_n)=(\gamma x_{1},\ldots,\gamma x_{k-1},x_{k},\gamma x_{k+1},\ldots, \gamma x_{n}).
  \]
For $i=1,2$ the polynomial $h_{i,\gamma}=\gamma^{-r_i}h_i(T_\gamma x)$ is stable. Since $T_\gamma^{-1}e$ is in the positive orthant and thus in the interior of $C_{h_i,\gamma}(e)$, we have 
\[
 C_{h_{i,\gamma}}(e)=C_{h_{i,\gamma}}(T^{-1}_\gamma e)=\{v\in\RR^n: h_i(T_\gamma v-te)=0\Rightarrow t\geq0\}=T_\gamma^{-1}(C_{h_i}(e)),
\]
where the first equality is \cite[Theorem~2]{gar}.
In particular $C_{h_1'}(e)\subseteq C_{h_2'}(e)$ implies $T_\gamma^{-1}(C_{h_1'}(e))\subseteq T_\gamma^{-1}(C_{h_2'}(e))$ and therefore 
   $C_{h_{1,\gamma}}(e)\subseteq C_{h_{2,\gamma}}(e)$. Since $h_i$ is homogeneous, the  limit $\lim_{\gamma\rightarrow 0}h_{i,\gamma}$ kills all the monomials, except those that are divisible by $x_{k}^{d_i-r_i}$. Thus, we have    \[
      \lim_{\gamma\rightarrow 0}h_{i,\gamma}   = x_{k}^{d_i-r_i}\left(h_i|_{x_{k}=1}\right)_{\#}.
    \]
 Now the claim follows from \Cref{limitcont} and \Cref{lemxk}.
\end{proof}

\begin{thm}\label{derspec}
  Let $h\in \RR\left[x_{1},\dots,x_{n}\right]$ be a homogeneous, multiaffine and stable polynomial and $e=(1,\ldots,1)$ the all-ones vector. If $\frac{\partial h}{\partial x_{k}}\neq0$ and the hyperbolicity cone $C_{h}(e)$ is spectrahedral, then the hyperbolicity cone of $\frac{\partial h}{\partial x_{k}}$ is also spectrahedral.
\end{thm}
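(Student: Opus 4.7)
The plan is to lift a spectrahedrality certificate for $C_h(e)$ to one for $C_{\partial h/\partial x_k}(e)$ by setting $x_k = 1$ and passing to initial forms. By \Cref{mariospec} there exists a homogeneous stable $g$ with $C_h(e) \subseteq C_g(e)$ and a determinantal representation $h\cdot g = \det\bigl(\sum_{i=1}^n x_i A_i\bigr)$ with $A_i \succeq 0$. I would define $g' := (g|_{x_k=1})_{\#}$; by \Cref{prop:facialstable} this is stable, and it does not depend on $x_k$. Since $h$ is homogeneous and multiaffine, $h|_{x_k=1}$ has terms of total degrees $\deg h - 1$ and $\deg h$ only, the former coming precisely from $\partial h/\partial x_k$, so $(h|_{x_k=1})_{\#} = \partial h/\partial x_k$. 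Applying \Cref{cor:sharpcones} to $h$ and $g$ then yields $C_{\partial h/\partial x_k}(e) \subseteq C_{g'}(e)$.

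The central task is to produce a determinantal representation of $\partial h/\partial x_k \cdot g'$, after which a second application of \Cref{mariospec} finishes the proof. Since the initial form is multiplicative on products,
\[
  \frac{\partial h}{\partial x_k} \cdot g' \;=\; (h|_{x_k=1})_{\#} \cdot (g|_{x_k=1})_{\#} \;=\; \bigl((h\cdot g)|_{x_k=1}\bigr)_{\#}.
\]
I would compute the right-hand side from the given determinantal representation. A congruence transformation $A_i \mapsto P^\top A_i P$ (which preserves the PSD property of each $A_i$) brings $A_k$ into block form $\begin{pmatrix} D & 0 \\ 0 & 0 \end{pmatrix}$ with $D$ positive definite of size $r := \rk(A_k)$; partition each $A_i$ accordingly, noting that the lower-right principal block $(A_i)_{22}$ is again PSD. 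A Schur complement expansion then shows that the coefficient of $x_k^r$ in $\det\bigl(\sum x_i A_i\bigr)$ equals $\det(D) \cdot \det\bigl(\sum_{i\neq k} x_i (A_i)_{22}\bigr)$. Since $h$ is multiaffine in $x_k$, the maximum $x_k$-degree of $h\cdot g$ is exactly $r$, so this coefficient coincides with the desired initial form. Absorbing the positive scalar $\det(D)$ into one of the PSD matrices by scaling yields the required determinantal representation of $\partial h/\partial x_k \cdot g'$.

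The main technical obstacle is verifying that the coefficient identified above is nonzero as a polynomial, i.e., that $\det\bigl(\sum_{i\neq k} x_i (A_i)_{22}\bigr) \not\equiv 0$. If it vanished identically, then evaluating at the all-ones vector would force the PSD matrices $(A_i)_{22}$, $i\neq k$, to share a nontrivial common kernel vector $v$; its extension by zeros in the upper block lies in the kernel of the transformed $A_k$ as well, hence is a common kernel vector of all the $A_i$. This would force $\det\bigl(\sum x_i A_i\bigr) \equiv 0$ and so $h\cdot g = 0$, contradicting the determinantal representation.
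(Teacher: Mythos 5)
Your proof is correct, and it follows the paper's overall skeleton — invoke \Cref{mariospec} to get $g$ with $C_h(e)\subseteq C_g(e)$ and a determinantal representation of $h\cdot g$, use multiaffineness to identify $(h|_{x_k=1})_\#=\partial h/\partial x_k$, get the cone containment $C_{\partial h/\partial x_k}(e)\subseteq C_{g'}(e)$ from \Cref{cor:sharpcones}, and close with \Cref{mariospec} again — but it replaces the paper's key technical step by a different argument. The paper produces the determinantal representation of $\bigl((hg)|_{x_k=1}\bigr)_\#$ by splitting each $A_i$ into rank-one summands, passing to the auxiliary multiaffine determinant $\tilde p$ in $\sum_i r_i$ variables, applying the cited \Cref{prm2} to take the iterated derivative in the split $x_{1j}$-variables, and then re-identifying variables. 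You instead normalize $A_k$ by a congruence, extract the coefficient of $x_k^{\rk A_k}$ by row multilinearity/Schur complement as $\det(D)\cdot\det\bigl(\sum_{i\neq k}x_i(A_i)_{22}\bigr)$, and rule out its identical vanishing by the common-kernel argument for PSD matrices; this is a self-contained, elementary substitute for \Cref{prm2} plus the splitting trick, and in effect reproves the classical fact that the leading coefficient of a PSD-determinantal polynomial in a coordinate direction is again PSD-determinantal. One phrasing should be repaired: multiaffineness of $h$ does \emph{not} by itself give $\deg_{x_k}(h\cdot g)=r$; what you actually use is the standard bound $\deg_{x_k}\det\bigl(\sum_i x_iA_i\bigr)\le\rk(A_k)$ together with the nonvanishing of the $x_k^r$-coefficient, which your final paragraph does establish — so the identification of that coefficient with $\bigl((hg)|_{x_k=1}\bigr)_\#=\frac{\partial h}{\partial x_k}\cdot g'$ stands, but attribute it to that nonvanishing rather than to multiaffineness. (Trivially, the positive constants $\det(P)^2$ and $\det(D)$ arising from the congruence and the block $D\succ0$ are absorbed by rescaling one PSD matrix, as you indicate.)
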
  

\begin{proof}
    Let $h\in \RR \left[ x_{1},\dots,x_{n}\right]$ be hyperbolic with respect to $e\in\RR^{n}$ and assume that $C_{h}(e)$ is spectrahedral. Then, by \Cref{mariospec}, there exists a stable polynomial $g\in \RR \left[ x_{1},\dots,x_{n}\right]$ such that $C_{h}(e)\subseteq C_{g}(e)$ and $p:=h\cdot g$ has a determinantal representation. We can assume that $k=1$ and that
    \[
      0\neq p=\det(x_{1}A_{1}+\ldots+x_{n}A_{n})
    \]
    where $A_{i}$ is positive semidefinite of rank $r_{i}\geq1$. Please note that $p$ does not need to be multiaffine. Let us write each $A_{i}$ as the sum of rank $1$ matrices:
    \[
      A_{i}=A_{i1}+\ldots+ A_{ir_{i}},
    \]
    where $A_{ij}$ are positive semidefinite matrices of rank $1$,
    $i\in \left[ n \right],\, j\in\left[r_{i}\right]$. Let us define a new polynomial
    \[
      \tilde{p}=
      \det(x_{11}A_{11}+x_{12}A_{12}+\dots+x_{1r_{1}}A_{1r_{1}}+\dots+x_{nr_{n}}A_{nr_{n}})
    \]
    in $\sum_{i=1}^{n}r_{i}$ variables. The degree of each variable $x_{i{j}}$ in $\tilde{p}$ is the rank of $A_{i{j}}$ so that $\tilde{p}$ is multiaffine. Moreover, since the rank of $A_i$ is $r_i$, there are monomials of $\tilde{p}$ that are divisible by $x_{i1}\cdots x_{ir_i}$, see e.g. \cite[p.~1206]{Branden2011}. By \Cref{prm2} the polynomial $$\frac{\partial^{r_1}\tilde{p}}{\partial x_{11}\cdots\partial x_{1r_1}}=(\tilde{p}|_{x_{1j}=1 \textrm{ for }j\in[r_1]})_\#$$ has a determinantal representation. The same is thus true for the polynomial $\hat{p}$ that we obtain from $\frac{\partial^{r_1}\tilde{p}}{\partial x_{11}\cdots\partial x_{1r_1}}$ by setting $x_{ir_{1}}=\cdots=x_{ir_{k}}=x_{i}$ for all $i\in \left[ n \right]$. By construction we have that $$\hat{p}=(p|_{x_1=1})_\#=(h|_{x_1=1}\cdot g|_{x_1=1})_\#=(h|_{x_1=1})_\#\cdot (g|_{x_1=1})_\#.$$Since $h$ is multiaffine, we have $(h|_{x_1=1})_\#=\frac{\partial h}{\partial x_{1}}$. Thus in order to prove the claim it remains to show that the hyperbolicity cone of $(h|_{x_1=1})_\#$ is contained in the hyperbolicity cone of $(g|_{x_1=1})_\#$. This is the content of \Cref{cor:sharpcones}.
\end{proof}

\begin{rem}
 The proof of \Cref{derspec} crucially relies on the assumption that $h$ is multiaffine and it is not known whether one can drop this assumption. A related result was proved in \cite{kummer2020spectral}: If $h$ (not necessarily multiaffine) has a determinantal representation, then the hyperbolicity cone of any iterated derivative in direction $e$ of $h$ is spectrahedral.
\end{rem}

\begin{lem}\label{lem:deletspec}
 Let $h\in \RR\left[x_{1},\dots,x_{n}\right]$ be a homogeneous, multiaffine and stable polynomial, and let $e=(1,\ldots,1)$ the all-ones vector. If $h|_{x_{k}=0}\neq0$ and the hyperbolicity cone $C_{h}(e)$ is spectrahedral, then the hyperbolicity cone of $h|_{x_{k}=0}$ is also spectrahedral.
\end{lem}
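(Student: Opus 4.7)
My plan is to mirror the proof of \Cref{derspec}, replacing the derivative with the deletion specialization $x_1\mapsto 0$. By \Cref{mariospec} I would first choose a homogeneous stable $g\in\RR[x_1,\ldots,x_n]$ with $C_h(e)\subseteq C_g(e)$ such that $p:=h\cdot g$ has a determinantal representation $p=\det(x_1A_1+\cdots+x_nA_n)$ with each $A_i$ positive semidefinite of rank $r_i$; WLOG $k=1$. Decomposing each $A_i$ into rank-one PSD summands $A_{ij}$ as in the proof of \Cref{derspec} yields the multiaffine stable polynomial $\tilde p=\det(\sum_{i,j}x_{ij}A_{ij})$, which admits a determinantal representation. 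Iteratively applying \Cref{prm2} to set $x_{1j}=0$ for each $j\in[r_1]$ preserves multiaffine stability and the existence of a determinantal representation, so $\tilde p|_{x_{11}=\cdots=x_{1r_1}=0}$ admits one; specializing $x_{ij}=x_i$ for $i\geq 2$ then produces a determinantal representation of $p|_{x_1=0}=h|_{x_1=0}\cdot g|_{x_1=0}$.

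For this to be useful I must first verify that $g|_{x_1=0}\neq 0$. Were $g$ divisible by $x_1$, then $C_g(e)\subseteq C_{x_1}(e)=\{v:v_1\geq 0\}$, forcing $C_h(e)\subseteq\{v:v_1\geq 0\}$. But with $h_0:=h|_{x_1=0}\neq 0$ and $h_1:=\partial h/\partial x_1$, the point $v=-\delta e_1+(e-e_1)$ yields
\[
h(te-v)=(t-1)^{d-1}\bigl[(a+b)t+\delta a-b\bigr],\qquad a=h_1(\mathbf{1}),\ b=h_0(\mathbf{1}),
\]
so both roots $t=1$ and $t=(b-\delta a)/(a+b)$ are nonnegative for sufficiently small $\delta>0$ whenever $a$, $b$, $a+b$ share a sign (in particular in the matroid case of positive coefficients). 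This exhibits $v\in C_h(e)$ with $v_1<0$, contradicting $C_h(e)\subseteq\{v:v_1\geq 0\}$. Hence $g|_{x_1=0}\neq 0$, and both $h|_{x_1=0}$ and $g|_{x_1=0}$ are stable by \Cref{prop:facialstable}.

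To conclude via \Cref{mariospec} it remains to show $C_{h|_{x_1=0}}(e)\subseteq C_{g|_{x_1=0}}(e)$, which is the deletion analog of \Cref{cor:sharpcones} and I would prove by a parallel scaling argument. For $\gamma>0$ set $T_\gamma(x)=(\gamma x_1,x_2,\ldots,x_n)$ and let $h_\gamma:=h\circ T_\gamma$, $g_\gamma:=g\circ T_\gamma$. Both are stable, and since $T_\gamma e$ lies in the positive orthant and hence in the interiors of $C_h(e)$ and $C_g(e)$, \cite[Theorem~2]{gar} yields $C_{h_\gamma}(e)=T_\gamma^{-1}C_h(e)$ and similarly for $g$. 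Thus $C_{h_\gamma}(e)\subseteq C_{g_\gamma}(e)$ for every $\gamma>0$, and because $\lim_{\gamma\to 0}h_\gamma=h|_{x_1=0}$ and $\lim_{\gamma\to 0}g_\gamma=g|_{x_1=0}$, \Cref{limitcont} gives $C_{h|_{x_1=0}}(e)\subseteq C_{g|_{x_1=0}}(e)$.

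The main obstacle is the verification $g|_{x_1=0}\neq 0$: without it the construction collapses to the trivial identity $0=0$. The sign computation above suffices in the matroid case, which is our primary interest, but a fully general homogeneous multiaffine stable $h$ with possibly mixed-sign coefficients may require a more delicate analysis to exhibit a point of $C_h(e)$ with negative first coordinate.
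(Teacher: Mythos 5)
Your route is genuinely different from the paper's, and much heavier. The paper's proof of this lemma is a one-liner: since $h|_{x_k=0}\neq 0$, the hyperbolicity cone of $h|_{x_k=0}$ is exactly the slice $C_h(e)\cap\{x_k=0\}$, and intersecting a spectrahedral cone with a hyperplane again gives a spectrahedral cone (substitute the linear equation into the defining pencil). No appeal to \Cref{mariospec} is needed at all. Your imitation of \Cref{derspec} does work in principle: the deletion analogue of \Cref{cor:sharpcones} via the scaling $T_\gamma$, G\r{a}rding's theorem and \Cref{limitcont} is carried out correctly. But note that the rank-one decomposition and \Cref{prm2} are superfluous for deletion: setting $x_1=0$ in $\det(x_1A_1+\cdots+x_nA_n)$ immediately yields a determinantal representation of $p|_{x_1=0}=h|_{x_1=0}\cdot g|_{x_1=0}$ of the correct size, provided this polynomial is nonzero.

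That proviso is the one real gap, and you flag it yourself: you must rule out $x_1\mid g$, and your argument for $g|_{x_1=0}\neq 0$ only goes through when $a=h_1(\mathbf{1})$, $b=h_0(\mathbf{1})$ and $a+b$ share a sign, so as written the proof does not cover the lemma in its stated generality (arbitrary homogeneous multiaffine stable $h$), whereas the paper's slicing argument does. The gap is closable: by the phase theorem of Choe--Oxley--Sokal--Wagner \cite{Choe_2004}, all nonzero coefficients of a homogeneous multiaffine polynomial with the half-plane property have the same phase, so for real stable $h$ you may assume all coefficients are positive; then $b>0$ and $a\geq 0$, and your computation exhibits a point of $C_h(e)$ with negative first coordinate, while the degenerate case $a=0$ (i.e.\ $h$ independent of $x_1$) is immediate since then $(-N,1,\ldots,1)\in C_h(e)$ for every $N>0$, because $h(te-v)=(t-1)^d h_0(\mathbf{1})$ has only the root $t=1$. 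With that supplement your proof is complete, but it remains a considerably longer detour than the intersection-with-a-hyperplane argument the paper uses.
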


\begin{proof}
 The hyperbolicity cone of $h|_{x_{k}=0}$ is the intersection of $C_{h}(e)$ with the hyperplane $x_k=0$ and thus spectrahedral.
\end{proof}

\begin{cor}
 The class of spectrahedral matroids is minor-closed.
\end{cor}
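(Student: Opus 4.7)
The plan is to show that spectrahedrality is preserved under a single deletion and under a single contraction; iterating then yields minor-closedness. Fix a spectrahedral matroid $M$ on ground set $E$ and an element $i \in E$. Recall the standard formulas stated in Section 2: if $i$ is not a coloop, $h_{M\setminus i} = h_M|_{x_i=0}$, and if $i$ is not a loop, $h_{M/i} = \partial h_M/\partial x_i$.

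In the main case, where $i$ is neither a loop nor a coloop, both $h_M|_{x_i=0}$ and $\partial h_M/\partial x_i$ are nonzero, homogeneous, multiaffine, and stable. \Cref{lem:deletspec} and \Cref{derspec} apply directly and show that the hyperbolicity cones of these two polynomials, viewed inside $\RR^n$, are spectrahedral. To pass from the cone in $\RR^n$ to the hyperbolicity cone in $\RR^{n-1}$ that is relevant for the minor, note that neither polynomial depends on $x_i$, so its cone in $\RR^n$ has the product form $\tilde{C}\times\RR$ with $\tilde{C}\subseteq\RR^{n-1}$; then $\tilde{C}$ is recovered by intersecting with the spectrahedral hyperplane $\{x_i=0\}$, which preserves spectrahedrality.

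The remaining cases are trivial bookkeeping. If $i$ is a loop, $h_M$ does not involve $x_i$ at all and $h_{M\setminus i}=h_{M/i}=h_M$ in the remaining variables. If $i$ is a coloop, $h_M=x_i\cdot g$ with $g$ independent of $x_i$, and $h_{M\setminus i}=h_{M/i}=g$. In both situations the target hyperbolicity cone in $\RR^{n-1}$ is obtained from $C_{h_M}(e)$ by intersecting with $\{x_i=0\}$ (and then forgetting the $x_i$-coordinate), which again preserves spectrahedrality.

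No substantial obstacle arises: all the real work is already contained in \Cref{derspec} and \Cref{lem:deletspec}, and the corollary reduces to combining these with the deletion/contraction formulas and the trivial handling of loops and coloops.
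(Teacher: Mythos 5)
Your proposal is correct and follows essentially the same route as the paper: contraction is handled by \Cref{derspec} and deletion by \Cref{lem:deletspec}, exactly as in the paper's one-line proof. The extra bookkeeping you supply (loops, coloops, and identifying the cone in $\RR^{n-1}$ with a coordinate-hyperplane section of the cone in $\RR^n$) is sound and merely makes explicit details the paper leaves implicit.
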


\begin{proof}
 If a matroid is spectrahedral, then the same is true for its contractions by \Cref{derspec} and for its deletions by \Cref{lem:deletspec}. 
\end{proof}

\begin{cor}\label{cor:weaklyclosed}
 Let $h\in \RR\left[x_{1},\dots,x_{n}\right]$ be  multiaffine, stable and weakly determinantal. Then $\frac{\partial h}{\partial x_{k}}$ and $h|_{x_k=0}$ are also weakly determinantal.
\end{cor}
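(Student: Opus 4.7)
The plan is to mimic the proof of \Cref{derspec}, applied to the polynomial $p := h^r$ in place of the $p := h\cdot g$ used there. Since $h$ is weakly determinantal, there exist $r\in\NN$ and positive semidefinite matrices $A_1,\ldots,A_n$ of size $rd\times rd$, where $d=\deg h$, such that $h^r = \det(x_1 A_1 + \ldots + x_n A_n)$.

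The statement for $h|_{x_k=0}$ is the easy half: setting $x_k=0$ in this determinantal representation of $h^r$ directly produces a determinantal representation $\det\bigl(\sum_{i\neq k} x_i A_i\bigr)$ of $(h|_{x_k=0})^r$, so $h|_{x_k=0}$ is weakly determinantal.

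For $\frac{\partial h}{\partial x_k}$ the main obstacle is that $h^r$ fails to be multiaffine, so \Cref{prm2} cannot be applied to it directly. I would circumvent this exactly as in the proof of \Cref{derspec}: assume WLOG $k=1$, let $r_i=\rank(A_i)$, decompose each $A_i = A_{i1}+\ldots+A_{ir_i}$ into rank-$1$ positive semidefinite summands, introduce new variables $x_{ij}$, and form the multiaffine stable polynomial
\[
\tilde{p} \;=\; \det\!\Bigl(\sum_{i=1}^{n}\sum_{j=1}^{r_i} x_{ij} A_{ij}\Bigr).
\]
Iterated application of \Cref{prm2} in the variables $x_{11},\ldots,x_{1r_1}$ would show that $(\tilde{p}|_{x_{1j}=1})_\#$ has a determinantal representation. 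Substituting $x_{ij}=x_i$ for $i\geq 2$ preserves this property, since the matrices $\sum_j A_{ij}$ are still positive semidefinite, and I would identify the resulting polynomial as
\[
(h^r|_{x_1=1})_\# \;=\; \bigl((h|_{x_1=1})_\#\bigr)^r \;=\; \Bigl(\tfrac{\partial h}{\partial x_1}\Bigr)^{\!r},
\]
where the first equality uses that the substitution $x_{ij}\mapsto x_i$ preserves total degree and therefore commutes with taking initial forms, and the last uses that $h$ is homogeneous and multiaffine.

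The key obstacle is exactly the non-multiaffinity of $h^r$, and the rank-$1$ expansion is the device that resolves it by lifting to a multiaffine polynomial where \Cref{prm2} applies, and then specializing back to recover an $r$-th power of $\partial h/\partial x_k$. A technical point I would spell out carefully is the commutation of taking initial forms with the substitution $x_{ij}\mapsto x_i$, which follows from the fact that this substitution sends each monomial to a monomial of the same total degree; once this is in place, the argument reduces to invoking \Cref{prm2} exactly as in \Cref{derspec}.
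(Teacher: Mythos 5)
Your proof is correct and is essentially the paper's own argument: the paper establishes the derivative case by applying the proof of \Cref{derspec} with $g=h^r$ (so that the polynomial $p$ there is a power of $h$ admitting a determinantal representation) and observes that the claim for $h|_{x_k=0}$ follows directly from the definition by restricting the representation, exactly as you do. One cosmetic caveat: commuting the substitution $x_{ij}\mapsto x_i$ with taking initial forms requires not only degree preservation but also absence of cancellation, which holds here because all coefficients of $\tilde{p}$ are nonnegative -- a point the paper's proof of \Cref{derspec} also passes over silently.
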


\begin{proof}
 For $\frac{\partial h}{\partial x_{k}}$ we can apply the proof of \Cref{derspec} to the case $g=h^r$ for some $r\geq0$. The claim for $h|_{x_k=0}$ follows directly from the definition.
\end{proof}

\begin{cor}
 The class of weakly determinantal matroids is minor-closed.
\end{cor}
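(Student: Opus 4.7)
The plan is to reduce the statement to Corollary \ref{cor:weaklyclosed} by way of the explicit formulas for basis generating polynomials of single-element minors recorded earlier in the paper. Since every minor of $M$ is obtained by iterating single-element deletions and contractions, and being weakly determinantal is a property of a single polynomial, it suffices to show that if $M$ is weakly determinantal then so are $M \setminus \{e\}$ and $M / \{e\}$ for every $e \in E$.

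Concretely, I would first recall the formulas from the paragraph preceding \Cref{lem:polysofminors}: for any $e \in E$, the polynomial $h_{M\setminus\{e\}}$ equals either $h_M|_{x_e=0}$ (when $e$ is not a coloop) or $\partial h_M / \partial x_e$ (when $e$ is a coloop), and $h_{M/\{e\}}$ equals either $\partial h_M / \partial x_e$ (when $e$ is not a loop) or $h_M$ itself (when $e$ is a loop). In every case, the resulting polynomial lies in the list $\{h_M,\ h_M|_{x_e=0},\ \partial h_M / \partial x_e\}$.

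Now assume $M$ is weakly determinantal, so $h_M$ is multiaffine, stable, and weakly determinantal. Trivially $h_M$ itself is weakly determinantal, and by \Cref{cor:weaklyclosed} both $\partial h_M / \partial x_e$ and $h_M|_{x_e=0}$ are weakly determinantal as well. Hence $h_{M\setminus\{e\}}$ and $h_{M/\{e\}}$ are weakly determinantal, which by definition means $M \setminus \{e\}$ and $M / \{e\}$ are weakly determinantal matroids. Iterating proves the corollary.

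There is essentially no obstacle here beyond bookkeeping: the real work was done in \Cref{cor:weaklyclosed}, whose proof in turn piggybacks on \Cref{derspec} applied to $g = h^r$. The only minor point to double-check is that deletion/contraction preserves multiaffineness and stability at every step so that \Cref{cor:weaklyclosed} can be reapplied along the iteration; both are immediate, as basis generating polynomials of matroids are multiaffine and the half-plane property is already known to be minor-closed.
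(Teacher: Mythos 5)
Your argument is correct and matches the paper's (implicit) proof: the corollary is stated without proof precisely because it follows immediately from \Cref{cor:weaklyclosed} together with the deletion/contraction formulas $h_{M\setminus\{e\}}\in\{h_M|_{x_e=0},\,\partial h_M/\partial x_e\}$ and $h_{M/\{e\}}\in\{\partial h_M/\partial x_e,\,h_M\}$, exactly as you spell out. Your closing remark that multiaffineness and stability persist along the iteration is the right bookkeeping check, and nothing more is needed.
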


\section{Matroid Polytopes} \label{sec1}

\begin{Def}
  The \emph{matroid polytope} $P(M)$ of a matroid $M$ is defined as
  \begin{align*}
    P(M)&=\conv\left\{\sum_{i\in B}e_i: B\in \mathcal{B}\right\}
  \end{align*}  where $e_i\in\RR^n$ denotes the $i$th unit vector.
\end{Def}

\begin{rem}
 The matroid polytope of a matroid is the Newton polytope of its basis generating polynomial.
\end{rem}

\begin{rem}\label{tp}
 It follows for example from \cite[\S4.4]{murota} that there is the following description of $P(M)$ by means of inequalities:
 \begin{align*}
    P(M)      &=\left\{x\in r\cdot\Delta_{n}: \sum_{i\in S} x_i \leq \rank(S) \text{ for all flats } S\subseteq E \right\}
  \end{align*}  where $r\cdot\Delta_{n}\subseteq\RR^n$ denotes $r$-fold dilation of the standard $n$-simplex $\Delta_n\subseteq\RR^n$. It follows from \cite[Thm.~4.1]{Gelfand_1987} that any face $F$ of $P(M)$ is the matroid polytope of a suitable uniquely determined matroid $M_F$ itself. For a face $F\neq\emptyset$ of $P(M)$ of codimension $1$ there are two possibilities. Either there is a flat $S$ of $M$ such that $F$ consists of all points $x\in r\cdot\Delta_{n}$ that satisfy $\sum_{i\in S} x_i = \rank(S)$. In this case we have $M_F=M|_S\oplus M\slash S=M\setminus \left(E\setminus S\right) \oplus M\slash S$ by \cite[Thm.~2]{Gelfand2}.
  Otherwise $F$ is the intersection of $r\cdot\Delta_{n}$ with a coordinate hyperplane $x_i=0$. Letting $S=E\setminus\{i\}$ we have $\rank(S)=r$ since $F\neq\emptyset$ and $F$ consists of all points $x\in r\cdot\Delta_{n}$ that satisfy $\sum_{i\in S} x_i = \rank(S)$. Again we have $M_F=M|_S=M|_S\oplus M/S$ although $S$ is not a flat.
\end{rem}

\begin{cor}\label{TT}
 Let $M$ be a matroid and $F$ a face of its matroid polytope. Let $P$ be a property that is minor-closed and preserved under taking the direct sum of matroids. If $M$ has property $P$, then $M_F$ has property $P$.
 
 This applies to the half-plane property, being weakly determinantal, spectrahedral and SOS-Rayleigh.
\end{cor}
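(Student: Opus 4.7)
The plan is a straightforward induction on the codimension of $F$ in $P(M)$, leveraging the structural description of codimension-$1$ faces from \Cref{tp}. In the base case (codimension $0$), $F = P(M)$ and $M_F = M$, so the conclusion is trivial. For the inductive step, any face $F$ of codimension $k \geq 1$ lies in some face $F'$ of codimension $k - 1$, and $F$ is then a codimension-$1$ face of $F' = P(M_{F'})$. By the inductive hypothesis $M_{F'}$ has property $P$, so it suffices to treat the codimension-$1$ case. There \Cref{tp} supplies a subset $S \subseteq E$ with $M_F = M|_S \oplus M/S$. Since $M|_S = M \setminus (E \setminus S)$ and $M/S$ are both minors of $M$, each has $P$ by minor-closedness, and closure under direct sums gives $P$ for $M_F$.

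For the four listed applications, one needs to verify that each property is both minor-closed and closed under direct sums. Minor-closedness is on record: for the half-plane property in \cite{Choe_2004}, and for weakly determinantal, spectrahedral and SOS-Rayleigh in the earlier corollaries of \Cref{sec:hypspec}. Direct-sum closure in each case follows from the product formula $h_{M_1 \oplus M_2} = h_{M_1} \cdot h_{M_2}$: products of stable polynomials are stable; block-diagonal constructions give determinantal, weakly determinantal and spectrahedral representations of the product from representations of the factors; and for SOS-Rayleigh, a direct computation shows that $\Delta_{ij}(h_1 \cdot h_2)$ equals $h_2^2 \cdot \Delta_{ij}(h_1)$, $h_1^2 \cdot \Delta_{ij}(h_2)$, or $0$ depending on whether $i,j$ both index variables of $h_1$, of $h_2$, or are mixed.

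The only small subtlety in the induction is checking that the matroid attached to $F$ as a face of $P(M)$ agrees with the matroid attached to $F$ viewed as a face of $P(M_{F'})$. This is immediate from the uniqueness assertion in \cite[Thm.~4.1]{Gelfand_1987} cited in \Cref{tp}: both matroids have matroid polytope equal to $F$, hence coincide. No step of the argument is genuinely hard; the real work has already been done in the earlier sections establishing minor-closedness for the three less standard properties.
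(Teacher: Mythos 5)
Your proposal is correct and follows essentially the same route as the paper: reduce to codimension-one faces, use the decomposition $M_F = M|_S \oplus M/S$ from \Cref{tp}, and then invoke minor-closedness plus closure under direct sums (block matrices for the determinantal-type properties, the identity $\Delta_{ij}(fg)=g^2\Delta_{ij}f+f^2\Delta_{ij}g$ for SOS-Rayleigh). The only cosmetic difference is that the paper handles the half-plane property directly via \Cref{prop:facialstable} rather than via minor- and direct-sum-closure, and it leaves the facet-induction and the uniqueness point implicit, both of which you spell out correctly.
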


\begin{proof}
 It suffices to show the claim for facets $F$ of the matroid polytope. Then it follows from the description $M_F=M|_S\oplus M\slash S$ from \Cref{tp}.
 
 The second claim is true for the half-plane property by \Cref{prop:facialstable}. The other three properties are minor-closed and it remains to show that these are preserved under taking the direct sums. For being weakly determinantal and spectrahedral this follows from taking suitable block matrices. For being SOS-Rayleigh it follows from the identity $\Delta_{ij}(f\cdot g)=f^2\Delta_{ij}g+g^2\Delta_{ij}f$.
\end{proof}

\begin{cor}\label{T1}
  Let $M$ be a matroid and $F$ be a facet of $P(M)$. There is a subset $S\subseteq E$ and a constant $c$ such that 
  \[
  h_{M_F}=c\cdot(h_M|_{x_i=1\textrm{ for }i\in E\setminus S})_{\#} \cdot (h_M|_{x_i=1\textrm{ for }i\in S})^{\#}.
  \]
\end{cor}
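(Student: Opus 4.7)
The plan is to combine the structural description of facets of the matroid polytope from \Cref{tp} with the computation of basis generating polynomials of minors as initial/leading forms in \Cref{lem:polysofminors}. By \Cref{tp}, every facet $F$ of $P(M)$ comes with a subset $T\subseteq E$ such that $M_F=M|_T\oplus M/T$; here $T$ is either a flat of $M$, or $T=E\setminus\{i\}$ in the coordinate-hyperplane case. I would simply \emph{choose} $S:=E\setminus T$; the claim is that this $S$ works, and the constant $c$ will be the product of the two constants appearing in \Cref{lem:polysofminors}.

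The first step is to write $h_{M_F}=h_{M|_T}\cdot h_{M/T}$, which is valid since basis generating polynomials are multiplicative over direct sums (as recalled in the remark following the definition of stability). The second step is to identify each factor as an initial/leading form of a specialization of $h_M$: for the contracted matroid, part~$(1)$ of \Cref{lem:polysofminors} applied to the subset $T=E\setminus S$ gives
\[
 h_{M/T}=c_{1}\cdot\bigl(h_M|_{x_i=1\text{ for }i\in E\setminus S}\bigr)_{\#};
\]
for the restricted matroid, I would use $M|_T=M\setminus(E\setminus T)=M\setminus S$ and apply part~$(2)$ of the same lemma to the subset $S$, obtaining
\[
 h_{M|_T}=c_{2}\cdot\bigl(h_M|_{x_i=1\text{ for }i\in S}\bigr)^{\#}.
\]
Multiplying these two identities together yields the claimed formula with $c=c_{1}c_{2}$.

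The only bookkeeping point to check is the second (degenerate) case of \Cref{tp}, where $F$ is a coordinate-hyperplane facet $\{x_i=0\}\cap r\cdot\Delta_n$ and $T=E\setminus\{i\}$, hence $S=\{i\}$. Here $M/T$ consists of a single loop, so $h_{M/T}=1$, and indeed $(h_M|_{x_j=1\text{ for }j\neq i})_{\#}$ is a constant since every monomial of the specialized polynomial has the same $x_i$-degree as the lowest power of $x_i$ appearing. The other factor $(h_M|_{x_i=1})^{\#}$ is proportional to $h_{M\setminus\{i\}}=h_{M|_T}=h_{M_F}$, again matching the formula. So no separate argument is needed for this case.

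I do not anticipate a genuine obstacle: the result is essentially a compilation of two previously stated facts, and the main care is in keeping straight which of $S$ and $E\setminus S$ corresponds to the contracted piece (it is $E\setminus S$) and which to the restricted piece (it is $S$), so that the initial form pairs with the contraction and the leading form with the restriction, consistent with \Cref{lem:polysofminors}.
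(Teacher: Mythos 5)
Your proof is correct and follows essentially the same route as the paper, which likewise obtains the identity by combining the facet description $M_F=M|_S\oplus M/S$ from \Cref{tp} with both parts of \Cref{lem:polysofminors} and the multiplicativity of basis generating polynomials over direct sums. Your explicit choice $S:=E\setminus T$ just makes precise the complement bookkeeping that the paper leaves implicit, and your separate check of the coordinate-hyperplane facet is harmless but unnecessary, since \Cref{tp} and \Cref{lem:polysofminors} already cover that case uniformly.
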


\begin{proof}
 By \Cref{tp} we have $M_F=M|_S\oplus M/S$ and thus the claim follows from \Cref{lem:polysofminors}.
\end{proof}

\begin{ex}
  Let $M$ be the matroid with basis generating polynomial
  \[
    h_{M}=x_{1}x_{2}+x_{2}x_{3}+x_{1}x_{4}+x_{2}x_{4}+x_{3}x_{4}.
  \]
  Its matroid polytope $P(M)$ is depicted in \Cref{pm}.
  \begin{figure}
    \begin{center}
      \begin{subfigure}[b]{0.4\linewidth}  
        \begin{tikzpicture}[x=0.6cm, y=0.6cm,every node/.style={node font=\footnotesize}]
          \node[below, color=gray] (e4) at (0,1) {};
          \node [below, color=gray ] (e2) at (6,0.5) {};
          \node [above, color=gray] (e3) at (1.8,5) {};
          \node [right,  color=gray](e1) at (4.4,5.7)  {};
          \node [below, color=gray] (4a) at (e4) {$\left(0,0,0,2\right)$};
          \node [below, color=gray] (2a) at (e2) {$\left(0,2,0,0\right)$};
          \node [above, color=gray] (3a) at (e3) {$\left(0,0,2,0\right)$};
          \node [above, color=gray] (1a) at (e1) {$\left(2,0,0,0\right)$};
          
          \coordinate[below left, color=gray] (ce4) at (0,1) {};
          \coordinate [below right ] (ce2) at (6,0.5) {};
          \coordinate[above left] (ce3) at (1.8,5) {};
          \coordinate [below right](ce1) at (4.4,5.7)  {};
          
          \draw [ color=gray] (ce4) -- (ce3) -- (ce2) -- cycle;
          \draw [ color=gray]  (ce2) -- (ce1);
          \draw [ color=gray]  (ce3) -- (ce1);
          \draw [ color=gray] (ce4) -- (ce1);
          
          \coordinate[] (c24) at ($(ce2)!0.5!(ce4)$){};   
          \coordinate [ ] (c23) at ($(ce2)!0.5!(ce3)$) {};  
          \coordinate [ ] (c14) at ($(ce1)!0.5!(ce4)$) {}; 
          \coordinate [] (c12) at ($(ce1)!0.5!(ce2)$) {};
          \coordinate [left] (c34) at ($(ce3)!0.5!(ce4)$) {};
          
          \draw [fill=cyan, opacity=0.5] (c34) -- (c14) -- (c12) -- (c23) -- (c34);
          \draw [fill=cyan, opacity=0.5] (c34) -- (c14) -- (c24) -- (c34);
          \draw [fill=cyan, opacity=0.5, dotted] (c14) -- (c12) -- (c24) -- (c14);
          \draw [fill=cyan, opacity=0.5] (c12) -- (c23) -- (c24) -- (c12);
          \draw [fill=cyan, opacity=0.5] (c34) -- (c23) -- (c24) -- (c34);
          
          \node [below] (24) at ($(ce2)!0.5!(ce4)$) {$\mathbf{v_{42}}$};
          \node [below, color=violet] (24a) at (24) {$\left(0,1,0,1\right)$};
          \node [below] (23) at ($(ce2)!0.5!(ce3)$) {$\mathbf{v_{23}} $};
          \node [below, color=violet] (23a) at (23) {$\left(0,1,1,0\right)$};
          \node [above] (14) at ($(ce1)!0.5!(ce4)$) {$\mathbf{v_{14}}$};
          \node [below, color=violet] (14a) at (14) {$\left(1,0,0,1\right)$};
          \node [right] (12) at ($(ce1)!0.5!(ce2)$) {$\mathbf{v_{12}}$};
          \node [below, color=violet] (12a) at (12) {$\left(1,1,0,0\right)$};
          \node [left] (34) at ($(ce3)!0.5!(ce4)$) {$\mathbf{v_{34}}$};
          \node [below, color=violet] (34a) at (34) {$\left(0,0,1,1\right)$};
        \end{tikzpicture}
        \caption{$P(M)$}
        \label{pm}
      \end{subfigure}
      \begin{subfigure}[b]{0.4\linewidth}
        \begin{tikzpicture}[x=0.6cm, y=0.6cm,every node/.style={node font=\footnotesize}]
          \node[below, color=gray] (e4) at (0,1) {};
          \node [below, color=gray ] (e2) at (6,0.5) {};
          \node [above, color=gray] (e3) at (1.8,5) {};
          \node [right,  color=gray](e1) at (4.4,5.7)  {};
          \node [below, color=gray] (4a) at (e4) {$\left(0,0,0,2\right)$};
          \node [below, color=gray] (2a) at (e2) {$\left(0,2,0,0\right)$};
          \node [above, color=gray] (3a) at (e3) {$\left(0,0,2,0\right)$};
          \node [above, color=gray] (1a) at (e1) {$\left(2,0,0,0\right)$};
          
          \coordinate[below left, color=gray] (ce4) at (0,1) {};
          \coordinate [below right ] (ce2) at (6,0.5) {};
          \coordinate[above left] (ce3) at (1.8,5) {};
          \coordinate [below right](ce1) at (4.4,5.7)  {};
          
          \draw [ color=gray] (ce4) -- (ce3) -- (ce2) -- cycle;
          \draw [ color=gray]  (ce2) -- (ce1);
          \draw [ color=gray]  (ce3) -- (ce1);
          \draw [ color=gray] (ce4) -- (ce1);
          
          \coordinate[] (c24) at ($(ce2)!0.5!(ce4)$){};   
          \coordinate [ ] (c23) at ($(ce2)!0.5!(ce3)$) {};  
          \coordinate [ ] (c14) at ($(ce1)!0.5!(ce4)$) {}; 
          \coordinate [] (c12) at ($(ce1)!0.5!(ce2)$) {};
          \coordinate [left] (c34) at ($(ce3)!0.5!(ce4)$) {};
          
          \draw [fill=magenta, opacity=0.8] (c34) -- (c14) -- (c12) -- (c23) -- (c34);
          \draw [fill=cyan, opacity=0.2] (c34) -- (c14) -- (c24) -- (c34);
          \draw [fill=cyan, opacity=0.2, dotted] (c14) -- (c12) -- (c24) -- (c14);
          \draw [fill=cyan, opacity=0.2] (c12) -- (c23) -- (c24) -- (c12);
          \draw [fill=cyan, opacity=0.2] (c34) -- (c23) -- (c24) -- (c34);
          
          \node [below, color=gray] (24) at ($(ce2)!0.5!(ce4)$) {$\mathbf{v_{42}}$};
          \node [below, color=gray] (24a) at (24) {$\left(0,1,0,1\right)$};
          \node [below] (23) at ($(ce2)!0.5!(ce3)$) {$\mathbf{v_{23}} $};
          \node [below, color=violet] (23a) at (23) {$\left(0,1,1,0\right)$};
          \node [above] (14) at ($(ce1)!0.5!(ce4)$) {$\mathbf{v_{14}}$};
          \node [below, color=violet] (14a) at (14) {$\left(1,0,0,1\right)$};
          \node [right] (12) at ($(ce1)!0.5!(ce2)$) {$\mathbf{v_{12}}$};
          \node [below, color=violet] (12a) at (12) {$\left(1,1,0,0\right)$};
          \node [left] (34) at ($(ce3)!0.5!(ce4)$) {$\mathbf{v_{34}}$};
          \node [below, color=violet] (34a) at (34) {$\left(0,0,1,1\right)$};
        \end{tikzpicture}
        \caption{$P(M_{F})$}
        \label{pmf}
      \end{subfigure}
    \end{center}
    \caption{Matroid polytopes $P(M)$ and $P(M_{F})$}
    \label{matroidpolytopes}
  \end{figure}
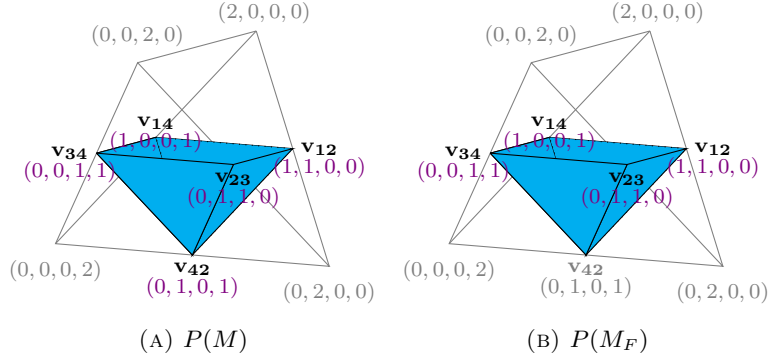
  The nonempty proper flats of $M$ are $S_{1}=\left\{ 2 \right\}$, $S_{2}=\left\{ 4 \right\}$ and $S_{3}=\left\{ 1,3 \right\}$. Thus $P(M)$ has the following representation in terms of inequalities:
  \[
    P(M)=\left\{ x\in 2\Delta_{4}: \left( \begin{array}{cccc} 0&1&0&0\\0&0&0&1 \\1&0&1&0\\1&1&1&1\end{array} \right) \begin{pmatrix}
                    x_1\\ x_2 \\ x_3 \\ x_4                                                                                                                  \end{pmatrix}
\leq \left( \begin{array}{c} 1\\1\\1\\2\end{array}\right) \right\}. 
  \]
  Consider for instance the flat $\left\{ 1,3 \right\}$, the hyperplane $H=\left\{x\in \RR^{4}:x_{1}+x_{3}=1  \right\}$ and the face $F=P(M)\cap H$. The matroid $M_{F}$ is the matroid whose bases are encoded by the vertices of the face $F$ that is illustrated in Figure~\ref{pmf}.
  Its basis generating polynomial is
  \[
    h_{M_{F}}=x_{1}x_{2}+x_{2}x_{3}+x_{1}x_{4}+x_{3}x_{4}=\left( x_{1}+x_{3} \right)\left(x_{2}+x_{4}\right).
  \]
  which is consistent with \Cref{T1}.
\end{ex}  

For the rest of the section we will extend these results to arbitrary stable polynomials. In \Cref{prop:facialstable} we have already seen that given a stable polynomial $f$ and a face $F$ of its Newton polytope, the polynomial $f_F$ is also stable.

\begin{lem}\label{lem:gotofacemuaff}
 Let $f\in\RR[x_1,\ldots,x_n]$ be homogeneous and multiaffine. Let $F$ be a face of $\newt(f)$. If $f$ has a determinantal representation, then $f_F$ has a determinantal representation as well.
\end{lem}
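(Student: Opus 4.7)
My plan is to upgrade the limit argument from the proof of \Cref{prop:facialstable} to the level of determinantal representations, using a Cholesky normalization to gain the compactness needed to extract a limiting representation.

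Writing $f=\det(x_{1}A_{1}+\cdots+x_{n}A_{n})$ with PSD matrices $A_{i}$ of size $d=\deg f$, I pick $a\in\RR^{n}$ and $c\in\RR$ with $\langle a,\alpha\rangle\geq c$ on $\newt(f)$ and equality exactly on $F$. Following the proof of \Cref{prop:facialstable}, set $f_{\epsilon}:=\epsilon^{-c}f(\epsilon^{a_{1}}x_{1},\ldots,\epsilon^{a_{n}}x_{n})$, so that $f_{\epsilon}\to f_{F}$ coefficient-wise as $\epsilon\to 0^{+}$. Homogeneity of the determinant absorbs the factor $\epsilon^{-c}$ into the matrices, giving a PSD determinantal representation $f_{\epsilon}=\det(\sum_{i}x_{i}A_{i}^{\epsilon})$ with $A_{i}^{\epsilon}:=\epsilon^{a_{i}-c/d}A_{i}$ for every $\epsilon>0$.

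The decisive step is the normalization. Because $f$ is stable and $(\epsilon^{a_{i}-c/d})_{i}$ lies in the positive orthant, the matrix $S_{\epsilon}:=\sum_{i}A_{i}^{\epsilon}$ is positive definite, so Cholesky gives $S_{\epsilon}=L_{\epsilon}L_{\epsilon}^{T}$ with $L_{\epsilon}$ invertible. The congruently transformed matrices $\tilde A_{i}^{\epsilon}:=L_{\epsilon}^{-1}A_{i}^{\epsilon}L_{\epsilon}^{-T}$ remain PSD, satisfy $\sum_{i}\tilde A_{i}^{\epsilon}=I$, and
\[
  f_{\epsilon}=\det(L_{\epsilon})^{2}\cdot\det\bigl(\textstyle\sum_{i}x_{i}\tilde A_{i}^{\epsilon}\bigr)=f_{\epsilon}(\mathbf{1})\cdot\det\bigl(\textstyle\sum_{i}x_{i}\tilde A_{i}^{\epsilon}\bigr).
\]
The family $\{(\tilde A_{i}^{\epsilon})_{i}\}$ now lives in the compact set $\{(B_{i}):B_{i}\succeq 0,\,\sum_{i}B_{i}=I\}$, so a subsequence converges to a PSD tuple $(\tilde A_{i}^{*})_{i}$. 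Since the coefficients of $f$ are non-negative (mixed-discriminant expansion of a PSD determinantal representation) and $F$ contains a vertex of $\newt(f)$, the same holds for $f_{F}$ and $f_{F}(\mathbf{1})>0$. Passing to the limit and absorbing $f_{F}(\mathbf{1})^{1/d}$ back into the matrices via $A_{i}^{*}:=f_{F}(\mathbf{1})^{1/d}\tilde A_{i}^{*}$ yields the desired PSD determinantal representation $f_{F}=\det(\sum_{i}x_{i}A_{i}^{*})$.

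The main obstacle is that the original scaled matrices $A_{i}^{\epsilon}$ themselves need not converge — the exponents $a_{i}-c/d$ can be negative and cause blow-up. The Cholesky normalization is precisely what tames these asymptotics by moving the representation into a compact family, and the positivity $f_{F}(\mathbf{1})>0$ is what allows the limiting scaled representation to be rearranged into a genuine determinantal representation of $f_{F}$ rather than one only up to an indeterminate positive multiplier.
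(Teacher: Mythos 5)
Your proof is correct, but it takes a genuinely different route from the paper's. The paper leans on multiaffinity: since $f$ is multiaffine it admits a pencil with rank-one PSD matrices $A_i=a_ia_i^t$, so $\newt(f)$ is the matroid polytope of the matroid represented by the $a_i$; the facet $F$ is then cut out by $\sum_{i\in S}x_i=\rank(S)$ for some subset $S$ (via \Cref{tp}), and after a congruence putting the span of $\{a_i:i\in S\}$ into the first $\rank(S)$ coordinates, a Laplace expansion exhibits $f_F$ as the determinant of a block-diagonal PSD pencil read off from the original one. Your argument never uses multiaffinity: you scale the pencil along the supporting direction of $F$, renormalize by congruence with the Cholesky factor of $S_\epsilon=\sum_i A_i^\epsilon$ (positive definite since it is a sum of PSD matrices with $\det S_\epsilon=f_\epsilon(\mathbf{1})>0$, using that the PSD pencil forces nonnegative coefficients, or stability), extract a limit tuple from the compact set $\{B_i\succeq0,\ \sum_iB_i=I\}$, and conclude $f_F=f_F(\mathbf{1})\cdot\det(\sum_ix_i\tilde A_i^{*})$ with $f_F(\mathbf{1})>0$ allowing the final rescaling; the matrix size stays $d\times d$ with $d=\deg f=\deg f_F$, so this is a determinantal representation in the paper's sense. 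Two consequences of the difference: because multiaffinity plays no role, your argument proves \Cref{lem:gotofacedetrep} directly, making the paper's reduction of the general case to the multiaffine one (splitting each PSD matrix into rank-one summands as in the proof of \Cref{derspec}) unnecessary for that purpose; on the other hand, the paper's proof is constructive and combinatorial, giving the representation of $f_F$ explicitly from the block structure of the original pencil and tying the face to matroid data, whereas yours is a soft limit/compactness argument that produces the representation non-explicitly. Both are complete; the only points worth stating carefully in a final write-up are that stability (or nonnegativity of coefficients) of $f$ follows from the PSD representation, and that $f_F\neq0$ because $F$ contains a vertex of $\newt(f)$.
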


\begin{proof}
 Without loss of generality, we can assume that $F$ is a facet of $P:=\newt(f)$. Since $f$ is multiaffine, we can write $$f=\det(x_1A_1+\ldots+x_nA_n)$$ for some positive semidefinite matrices of rank at most $1$, i.e., we have $A_i=a_i\cdot a_i^t$ for some $a_i\in\RR^d$ \cite[\S 3]{Branden2011}. Let $M$ be the matroid represented by the vectors $a_1,\ldots,a_n$. Then we have $P=P(M)$. %If $F$ is the intersection of $P$ with a coordinate hyperplane, then $f_F$ is obtained from $f$ by setting the corresponding variable to zero. In particular, it has a determinantal representation. Otherwise, 
 By \Cref{tp} there is a subset $S$ of the ground set of $M$ such that $F$ consists of all $p\in d\cdot\Delta_{n}$ that satisfy $\sum_{i\in S} p_i = \rank(S)=:r$. After replacing the $A_i$ by $N^tA_iN$ for some invertible matrix $N$, we can assume that the span of all $a_i$ with $i\in S$ is spanned by the first $r$ standard unit vectors. This means that in the matrix $A:=x_1A_1+\ldots+x_nA_n$ the variables $x_i$ for $i\in S$ only show up in the upper left $r\times r$ block $R_1$ of $A$. Let $R_2$ be the lower right $(d-r)\times(d-r)$ block. By applying Laplace expansion to the determinant of $A$ we see that $f_F$, which is the sum of terms of $f$ that have maximal degree in the $x_i$, $i\in S$, is exactly the determinant of the block matrix $$\begin{pmatrix} R_1|_{x_j=0,\, j\not\in S} &0\\ 0& R_2\end{pmatrix}.$$This shows that $f_F$ has a determinantal representation.
\end{proof}

\begin{lem}\label{lem:gotofacedetrep}
 Let $f\in\RR[x_1,\ldots,x_n]$ be homogeneous. Let $F$ be a face of $\newt(f)$. If $f$ has a determinantal representation, then $f_F$ has a determinantal representation as well.
\end{lem}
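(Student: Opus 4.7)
The plan is to reduce to the multiaffine case (\Cref{lem:gotofacemuaff}) by splitting the matrices in a determinantal representation of $f$ into rank-one pieces, exactly as in the proof of \Cref{derspec}. Concretely, write $f=\det(x_1A_1+\ldots+x_nA_n)$ with each $A_i$ positive semidefinite of rank $r_i$, and decompose $A_i=A_{i1}+\cdots+A_{ir_i}$ into rank-one positive semidefinite summands. Let
\[
 \tilde{f}=\det\Bigl(\sum_{i=1}^n\sum_{j=1}^{r_i}x_{ij}A_{ij}\Bigr)\in\RR[x_{ij}:i\in[n],\,j\in[r_i]]
\]
in $\sum_i r_i$ new variables. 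By construction $\tilde f$ is multiaffine, homogeneous, and has a determinantal representation; moreover $f$ is recovered from $\tilde f$ by the specialization $x_{ij}\mapsto x_i$.

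Next I would transfer the face $F$ of $\newt(f)$ to a face of $\newt(\tilde f)$. Let $\pi\colon\RR^{\sum r_i}\to\RR^n$ be the linear map sending the standard basis vector $e_{ij}$ to $e_i$; under the specialization $x_{ij}\mapsto x_i$, a monomial with exponent $\alpha$ in $\tilde f$ contributes to the monomial with exponent $\pi(\alpha)$ in $f$. A key point is that the coefficients of $\tilde f$ are nonnegative (by Cauchy--Binet, each coefficient is $\det([a_{ij}:(i,j)\in S])^2$ where $A_{ij}=a_{ij}a_{ij}^t$), so no cancellation occurs under specialization. It follows that $\newt(f)=\pi(\newt(\tilde f))$ and that the support of $f$ is the image of the support of $\tilde f$. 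Now if $F$ is cut out from $\newt(f)$ by the inequality $\langle a,y\rangle\geq c$ with equality on $F$, then setting $\tilde a_{ij}:=a_i$ gives $\langle \tilde a,\alpha\rangle=\langle a,\pi(\alpha)\rangle$ for every $\alpha\in\ZZ_{\geq0}^{\sum r_i}$; hence $\tilde F:=\{\alpha\in\newt(\tilde f):\langle\tilde a,\alpha\rangle=c\}$ is a face of $\newt(\tilde f)$, and $\tilde f_{\tilde F}|_{x_{ij}=x_i}=f_F$.

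Finally I would apply \Cref{lem:gotofacemuaff} to the multiaffine polynomial $\tilde f$ with the face $\tilde F$ to obtain a determinantal representation of $\tilde f_{\tilde F}$. Specializing $x_{ij}\mapsto x_i$ in this representation simply groups the rank-one positive semidefinite matrices labelled by the pairs $(i,j)$ into a single positive semidefinite matrix in front of $x_i$, producing a determinantal representation of $f_F$.

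I expect the main technical issue to be verifying the correspondence of faces and monomials in the second step, in particular the claim that no cancellation happens when specializing $x_{ij}\mapsto x_i$ and that $\tilde F$ is indeed a face whose restriction matches $f_F$. Once this bookkeeping is in place, the reduction to the already-proved multiaffine case is immediate. One could alternatively try a direct degeneration argument in the spirit of \Cref{prop:facialstable} (rescaling variables by $\epsilon^{a_i}$ inside the determinant and taking $\epsilon\to 0$), but the normalization factor $\epsilon^{-c}$ does not in general absorb into a row/column scaling of the matrix pencil, so this direct approach does not obviously preserve the determinantal form—hence the preference for the splitting route above.
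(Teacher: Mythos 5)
Your proposal is correct and follows essentially the same route as the paper: split the matrices into rank-one summands as in the proof of \Cref{derspec} to obtain a multiaffine polynomial $\tilde f$ with $f=\tilde f|_{x_{ij}=x_i}$, transfer $F$ to a face of $\newt(\tilde f)$, and apply \Cref{lem:gotofacemuaff} before specializing back. Your Cauchy--Binet argument that the coefficients of $\tilde f$ are nonnegative, so that no cancellation occurs and the face correspondence $f_F=(\tilde f_{\tilde F})|_{x_{ij}=x_i}$ holds, is exactly the bookkeeping the paper leaves implicit.
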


\begin{proof}
 As in the proof of \Cref{derspec} we can find a multiaffine homogeneous polynomial $p\in\RR[x_{11},\ldots,x_{1r_1},\ldots,x_{nr_n}]$ which has a determinantal representation such that $f=p|_{x_{ij}=x_i}$. In particular, the polytope $\newt(f)$ is the image of $\newt(p)$ under a linear map. There is a face $F'$ of $\newt(p)$ such that $f_F=(p_{F'})|_{x_{ij}=x_i}$. Thus the claim follows from \Cref{lem:gotofacemuaff}.
\end{proof}

\begin{cor}
 Let $f\in\RR[x_1,\ldots,x_n]$ be homogeneous and stable. Let $F$ be a face of $\newt(f)$. If $f$ is weakly determinantal or spectrahedral, then $f_F$ is weakly determinantal resp. spectrahedral as well.
\end{cor}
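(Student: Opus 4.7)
The plan is to handle the two cases separately, with the weakly determinantal case following essentially immediately from \Cref{lem:gotofacedetrep} and the spectrahedral case requiring a limiting argument in the spirit of \Cref{cor:sharpcones}.

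For the weakly determinantal case, suppose $f^r$ has a determinantal representation for some positive integer $r$. The key observation is that $\newt(f^r)=r\cdot\newt(f)$, so the face corresponding to $F$ is $rF$, and $(f^r)_{rF}=(f_F)^r$. Applying \Cref{lem:gotofacedetrep} to $f^r$ and the face $rF$ yields that $(f_F)^r$ has a determinantal representation, so $f_F$ is weakly determinantal.

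For the spectrahedral case, by \Cref{mariospec} choose a homogeneous stable polynomial $g$ with $C_f(e)\subseteq C_g(e)$ such that $f\cdot g$ has a determinantal representation. Pick a linear functional $\langle a,\cdot\rangle$ on $\RR^n$ that is minimized on $\newt(f)$ exactly along $F$, and let $F'$ denote the face of $\newt(g)$ on which $\langle a,\cdot\rangle$ is minimized. Since $\newt(f\cdot g)=\newt(f)+\newt(g)$, the corresponding face of $\newt(fg)$ is $F+F'$ and we have the factorization $(f\cdot g)_{F+F'}=f_F\cdot g_{F'}$. By \Cref{lem:gotofacedetrep} applied to $f\cdot g$, this product has a determinantal representation, and by \Cref{prop:facialstable} both $f_F$ and $g_{F'}$ are stable. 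In view of \Cref{mariospec} it thus suffices to verify that $C_{f_F}(e)\subseteq C_{g_{F'}}(e)$.

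To establish this containment I would mimic the proof of \Cref{cor:sharpcones}, replacing the scaling $T_\gamma$ used there by the weight-induced scaling $T_\gamma(x_1,\ldots,x_n)=(\gamma^{a_1}x_1,\ldots,\gamma^{a_n}x_n)$. If $c_f=\min_{\alpha\in\newt(f)}\langle a,\alpha\rangle$ and $c_g=\min_{\alpha\in\newt(g)}\langle a,\alpha\rangle$, then the rescaled polynomials $f_\gamma:=\gamma^{-c_f}f(T_\gamma x)$ and $g_\gamma:=\gamma^{-c_g}g(T_\gamma x)$ are stable for $\gamma>0$, and (as in \Cref{cor:sharpcones}) one checks that $C_{f_\gamma}(e)=T_\gamma^{-1}(C_f(e))$ and $C_{g_\gamma}(e)=T_\gamma^{-1}(C_g(e))$, so that $C_f(e)\subseteq C_g(e)$ yields $C_{f_\gamma}(e)\subseteq C_{g_\gamma}(e)$. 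As $\gamma\to 0$ these rescalings converge to $f_F$ and $g_{F'}$ respectively, so \Cref{limitcont} gives $C_{f_F}(e)\subseteq C_{g_{F'}}(e)$, completing the argument.

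The main obstacle is the last step: verifying that the hyperbolicity cone transforms as $C_{f_\gamma}(e)=T_\gamma^{-1}(C_f(e))$ despite the rescaling mixing $e$ with the direction $T_\gamma e$, and that \Cref{limitcont} legitimately applies since the rescaled polynomials and their limits are all hyperbolic with respect to $e$. Both points go through essentially as in the proof of \Cref{cor:sharpcones}, using that $T_\gamma e$ lies in the positive orthant and hence in the interior of each hyperbolicity cone involved.
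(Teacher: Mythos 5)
Your proposal is correct and follows essentially the same route as the paper: factor $(f\cdot g)_{F''}=f_F\cdot g_{F'}$, apply the facial determinantal-representation lemma, and obtain the cone containment $C_{f_F}(e)\subseteq C_{g_{F'}}(e)$ by the same weighted-scaling limit argument (as in \Cref{prop:facialstable} and \Cref{cor:sharpcones}) together with \Cref{limitcont} and \Cref{mariospec}. Your direct treatment of the weakly determinantal case via $\newt(f^r)=r\cdot\newt(f)$ is just a slight shortcut of the paper's remark that one may take $g=f^r$ and proceed identically.
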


\begin{proof}
 Let $a\in\RR^n$ and $c\in\RR$ such that $\langle a,p\rangle\geq c$ for all $p\in\newt(f)$ with equality exactly when $p\in F$. If $f$ is spectrahedral, then by \Cref{mariospec} there is a stable polynomial $g$ whose hyperbolicity cone contains the one of $f$ such that $f\cdot g$ has a determinantal representation. Let $F'$ and $F''$ be the faces of $\newt(g)$ and $\newt(f\cdot g)$ where $\langle a, -\rangle$ attains its minimum. Then we have $(f\cdot g)_{F''}=f_F\cdot g_{F'}$ and this polynomial has a determinantal representation by \Cref{lem:gotofacedetrep}. Writing $f_F$ and $g_{F'}$ as limits as in the proof of \Cref{prop:facialstable}, it follows from \Cref{limitcont} that the hyperbolicity cone of $g_{F'}$ contains the one of $f_F$. Thus we conclude that $f_F$ is spectrahedral. If $f$ is weakly determinantal, then we can proceed in the same way by letting $g=f^r$ for some $r\geq0$.
\end{proof}

\section{Classification of Matroids on Eight Elements with Respect to the Half-plane Property}\label{sec:classi}
In this section we classify all matroids on a ground set $E$ with $|E|\leq 8$ that have the half-plane property. Matroids with the half-plane property on at most seven elements were classified in \cite{Choe_2004}, so it remains to consider ground sets of exactly eight elements.

Let $M$ be a matroid on the ground set $\left[ n \right]$ with basis generating polynomial $h_{M}$. 
Recall from \Cref{thm:rayleighbrand} that $M$ has the half-plane property if and only if $\Delta_{ij}h_M(x)\geq0$ for all $x\in\RR^n$ and $1\leq i,j\leq n$. We will make use of this criterion. Wagner and Wei proved that when all proper minors of $M$ have the half-plane property, it suffices to find \emph{one} Rayleigh difference which is nonnegative.

\begin{thm}[Thm.~3 in \cite{Wagner_2009}]\label{thm:wwcrit}
 Let $M$ be a matroid all of whose proper minors have the half-plane property. If there exist distinct indices $i,j\in[n]$ such that $\Delta_{ij}h_M(x)\geq0$ for all $x\in\RR^n$, then $M$ has the half-plane property.
\end{thm}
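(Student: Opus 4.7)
The plan is to deduce $\Delta_{kl}(h_M)\ge 0$ on $\mathbb{R}^n$ for \emph{every} pair of distinct indices $k,l$ from the single hypothesis $\Delta_{ij}(h_M)\ge 0$ together with the stability of all proper minors; once this is in hand, \Cref{thm:rayleighbrand} immediately gives stability of $h_M$, i.e., the half-plane property.

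For a pair $(k,l)\neq (i,j)$, I would first exploit the standard multiaffine bilinear decomposition. Writing $h_M=Px_kx_l+Qx_k+Rx_l+S$ with $P,Q,R,S$ not involving $x_k,x_l$, a routine computation yields
\[
\Delta_{kl}(h_M)=QR-PS,
\]
where $P,Q,R,S$ are (up to convention) the basis generating polynomials of the proper minors $M/\{k,l\}$, $(M/k)\setminus l$, $(M/l)\setminus k$ and $M\setminus\{k,l\}$ respectively; by hypothesis all four are stable. Next, for an auxiliary index $m\notin\{k,l\}$, split $h_M=x_mA+B$ with $A=h_{M/m}$ and $B=h_{M\setminus m}$ both stable. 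Expanding in powers of $x_m$ one obtains
\[
\Delta_{kl}(h_M)=x_m^{2}\,\Delta_{kl}(A)+x_m\,T_m+\Delta_{kl}(B),
\]
whose leading and constant coefficients are non-negative on $\mathbb{R}^{n-1}$ by \Cref{thm:rayleighbrand} applied to $A$ and $B$. Thus $\Delta_{kl}(h_M)\ge 0$ on $\mathbb{R}^n$ is equivalent to the discriminant-style bound $T_m^{2}\le 4\,\Delta_{kl}(A)\,\Delta_{kl}(B)$ on $\mathbb{R}^{n-1}$.

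The main obstacle, and the technical heart of the theorem, is to extract this discriminant bound from the lone hypothesis $\Delta_{ij}(h_M)\ge 0$. My plan is to choose $m\in\{i,j\}$ (say $m=i$), so that the hypothesis enters the analysis directly: the analogous bilinear decomposition of $\Delta_{ij}(h_M)$ in the variable $x_k$ (or $x_l$) produces the same cross coefficient $T_m$ in disguise, and non-negativity of $\Delta_{ij}(h_M)$ translates, via \Cref{thm:rayleighbrand} applied to the now-stable minors, to exactly the required bound on $T_m$. Equivalently, one may argue by contradiction using Hurwitz's theorem: if $\Delta_{kl}(h_M)$ were negative at some real point, then a suitable 2-variable complex restriction of $h_M$ would acquire a zero in the open upper half-plane, and invoking stability of the minors to deform such a zero, one could reach a configuration contradicting the 2-variable stability characterization $BC-AD\ge 0$ for bilinear polynomials with positive coefficients, which itself encodes the assumed $\Delta_{ij}(h_M)\ge 0$. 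In either approach, the delicate step is the matroid-theoretic identity linking different Rayleigh differences through the minor structure; this is precisely where the "one suffices'' flavor of the theorem emerges.
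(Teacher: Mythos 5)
The paper does not prove this statement at all: it is imported verbatim as Theorem~3 of \cite{Wagner_2009}, so the only question is whether your outline itself constitutes a proof, and it does not. Your reductions are correct and standard: $\Delta_{kl}(h_M)=QR-PS$ with $P,Q,R,S$ basis generating polynomials of proper minors, and the expansion $\Delta_{kl}(h_M)=x_m^2\,\Delta_{kl}(A)+x_m T_m+\Delta_{kl}(B)$ with nonnegative extreme coefficients, so that the theorem is equivalent to the pointwise bound $T_m^2\le 4\,\Delta_{kl}(A)\,\Delta_{kl}(B)$. But the step that would establish this bound --- which you yourself label the technical heart --- is only asserted. The claim that expanding $\Delta_{ij}(h_M)$ in $x_k$ ``produces the same cross coefficient $T_m$ in disguise'' is unsubstantiated: that expansion has its own mixed coefficient, and you state and prove no identity linking it to $T_m$. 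To the extent that quadratic identities among Rayleigh differences are available in the literature, they bring in \emph{other} Rayleigh differences (such as $\Delta_{km}$ and $\Delta_{lm}$), whose nonnegativity is exactly what you do not yet know; so a naive identity chase does not convert the single hypothesis $\Delta_{ij}\ge 0$ into the bound for an arbitrary pair $(k,l)$.

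The alternative contradiction route is also flawed as written. The bilinear criterion ``$BC-AD\ge 0$'' characterizes stability of $A+Bx_i+Cx_j+Dx_ix_j$ only when the coefficients are real (it is just \Cref{thm:rayleighbrand} in two variables), whereas to produce a zero of $h_M$ with all coordinates in the open upper half-plane you must specialize the remaining variables to non-real values, after which $A,B,C,D$ are complex and the hypothesis $\Delta_{ij}(h_M)\ge 0$ on $\RR^n$ says nothing directly about them. Passing from real nonnegativity of Rayleigh differences to non-vanishing on the upper half-plane is precisely the content of Br\"and\'en's theorem \cite{Branden2007}, and that theorem needs \emph{all} pairs; making a single pair suffice is the entire point of Wagner--Wei's result, and it requires genuine additional input (their published argument does not follow from merely re-expanding the hypothesis, and in particular uses the structure of the stable ``minors'' and of the matroid support). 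As it stands, your proposal is a correct framing of what must be shown, with a gap exactly at the decisive step.
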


\Cref{thm:wwcrit} will be the main criterion that we will use to prove that a certain matroid has the half-plane property. But first we narrow down the set of matroids that have to be considered. By \cite[Prop.~4.2]{Choe_2004} having the half-plane property is closed under taking the dual of the matroid. Thus it is enough to consider matroids on eight elements with rank at most four.
Further the half-plane property is preserved by direct sums, by adjoining loops or parallel elements \cite[\S 4]{Choe_2004}.
Therefore, we can restrict attention to simple connected matroids on eight elements of rank at most four. Finally, all rank-1 and rank-2 matroids have the half-plane property by \cite[Cor.~5.5]{Choe_2004}. Thus we can restrict attention to matroids on eight elements of rank three and four.

We will use the Macaulay2 \cite{M2} package ``Matroids'' by Chen \cite{MatroidsSource, MatroidsArticle} where a list of all matroids on at most eight elements is implemented: The command \texttt{allMatroids(8)} yields a list of all matroids on $8$ elements. In the following, we will denote by $\mathcal{M}_i$ the $i$th element of this list (counting starts from zero).
We obtain that the number of non-isomorphic matroids on $8$ elements with rank $3$ or $4$ is $1265$. While $685$ of them are simple, $659$ of them are simple and connected. In \cite{Choe_2004} it was shown that all matroids on at most seven elements have the half-plane property, except for the Fano matroid $F_7$, Non-Fano matroid $ F_7^{-}$, $F_7^{--}$, $F_7^{-3}$, $M(K_{4})+e$ and their duals.
Since the half-plane property is minor-closed, we can exclude all matroids that have one of these matroids as a minor. Then we are left with $309$ simple and connected matroids all of whose proper minors have the half-plane property. By \Cref{thm:wwcrit}, for each of these $309$ matroids, if we can prove that one Rayleigh difference is nonnegative, the matroid in question has the half-plane property.

For doing this, we run a sums of squares test using the Macaulay2 package ``SumsOfSquares'' by Cifuentes, Kahle, Parrilo, and Peyrl \cite{SumsOfSquaresSource,SumsOfSquaresArticle} on all Rayleigh differences of the basis generating polynomials of all these $309$ matroids. %to check for which $M$ the Rayleigh difference $\Delta_{ij}(h_{M})$ is a sum of squares for some indices $i,j$. 
We obtain that for $287$ matroids from this list there exists some indices $i,j$ such that $\Delta_{ij}(h_{M})$ is a sum of squares.
Among them $256$ matroids are SOS-Rayleigh, i.e., for those matroids \emph{all} Rayleigh differences are a sum of squares. Moreover, for each of the $287$ matroids, there were some indices $i,j$ for which the SDP solver could find a Gram matrix with \emph{rational} entries. Thus we even obtain a \emph{symbolic} certificate for being a sums of squares, and thus for the corresponding matroid to have the half-plane property.
For more information on the source code of the tests and symbolic certificates, please see the ancillary files \texttt{Appendix.pdf, M2Code.m2} and \texttt{SageCode.sage}. For more information on SDP solvers and the ``SumsOfSquares'' package in Macaulay2, we refer to \cite{2012-siamSDP, SumsOfSquaresSource, SumsOfSquaresArticle}.

We thus note that only $22$ matroids are left for which we do not know whether they have the half-plane property: the fact that for those $22$ matroids there are no $i,j$ for which the Rayleigh difference is a sum of squares does not imply that these matroids do not have the half-plane property as there are nonnegative polynomials that are not a sum of squares.  All these $22$ matroids have rank $4$. This implies that among the matroids on eight elements with rank $3$ there are no new forbidden minors for the half-plane property.

In order to investigate the half-plane property of those remaining $22$ matroids, recall that the homogeneous polynomial $h_{M}$ has the half-plane property if and only if it is hyperbolic with respect to $e=(1,\ldots,1)$ and $\RR^{n}_{\geq 0}\subseteq C_{h_{M}}(e)$. Since the roots of a polynomial depend continuously on their coefficients, finding points $e,v\in\mathbb{R}_{\ge0}^n$ for which $h_M(et-v)$ has not only real zeros proves that $h_M$ does not have the half-plane property. Thus, testing whether $h_{M}(et-v)$ has only real roots for $e, v \in \left\{ 0,1 \right\}^{8}$ provides a quick way to eliminate some more matroids. We obtain that $15$ of the $22$ matroids do not pass this test, and the remaining $7$ matroids require more tests to prove or disprove that they have the half-plane property. For the list of these $15$ matroids whose basis generating polynomials fail this test and for the corresponding points $e,v$, we refer to Table~\ref{directions}.

\begin{table}\tiny{
  $  \begin{array}{|c|c|c|}
    \hline
    
      & \textbf{Number of pairs }\mathbf{(e,v)} \textbf{ for which}& \\
    \textbf{ Matroids }& \mathbf{h_{M}(et-v)}&\textbf{One of the pairs }\mathbf{ (e,v)}\\
      &\textbf{has non-real roots}& \\
    
    \hline
    
    \mathcal{M}_{435}&  4& \left(\left( 1, 1, 1, 1, 1, 1, 0, 1 \right), \left(1, 1, 0, 0, 0, 0, 1, 1\right)\right)\\
    
    \hline
    
    \mathcal{M}_{437}&  8&\left( \left( 0, 0, 0, 0, 1, 1, 1, 1 \right),\left( 1, 1, 1, 1, 1, 1, 0, 1 \right) \right)\\
    
    \hline
    
    \mathcal{M}_{439}&  4&\left( \left(1, 0, 1, 1, 1, 1, 1, 1  \right),\left(1, 1, 1, 1, 0, 0, 0, 0  \right) \right)\\
    
    \hline
    
    \mathcal{M}_{443}&  2&\left( \left(1, 1, 1, 1, 0, 1, 1, 1  \right),\left(0, 0, 0, 0, 1, 1, 1, 1  \right) \right)\\
    
    \hline
    
    \mathcal{M}_{450}&  4&\left( \left(0, 1, 1, 1, 1, 1, 1, 1  \right),\left(1, 1, 0, 0, 1, 1, 0, 0  \right) \right)\\
    
    \hline
    
    \mathcal{M}_{455}&  2&\left( \left(1, 1, 0, 1, 0, 1, 0, 1  \right),\left(0, 1, 1, 0, 1, 0, 1, 1  \right) \right)\\
    
    \hline
    
    \mathcal{M}_{460}&  2&\left( \left(0, 0, 1, 1, 1, 1, 1, 1  \right),\left(1, 1, 0, 0, 0, 0, 1, 1  \right) \right)\\
    
    \hline
    
    \mathcal{M}_{461}&  12&\left( \left(1, 1, 1, 1, 0, 1, 1, 1  \right),\left( 0, 0, 1, 1, 1, 1, 0, 0 \right) \right)\\
    
    \hline
    
    \mathcal{M}_{465}&  10&\left( \left(0,1, 1, 1, 1, 1, 1, 1   \right),\left(1, 1, 0, 0, 0, 0, 1, 1  \right) \right)\\
    
    \hline
    
    \mathcal{M}_{466}&  38&\left( \left(0, 0, 1, 1, 1, 1, 0, 1  \right),\left(1, 1, 0, 0, 0, 0, 1, 1  \right) \right)\\
    
    \hline
    
    \mathcal{M}_{467}&  62&\left( \left(1, 1, 0, 1, 1, 0, 1, 1  \right),\left( 0, 0, 1, 1, 1, 1, 0, 0 \right) \right)\\
    
    \hline
    
    \mathcal{M}_{548}&  6&\left( \left(1, 1, 1, 1, 0, 0, 0, 1  \right),\left( 0, 0, 0, 0, 1, 1, 1, 1 \right) \right)\\
    
    \hline
    
    \mathcal{M}_{549}&  4&\left( \left(1, 1, 1, 1, 0, 0, 1, 1  \right),\left( 1, 1, 1, 1, 0, 0, 1, 1 \right) \right)\\
    
    \hline
    
    \mathcal{M}_{570}&  78&\left( \left(1, 1, 0, 0, 1, 1, 1, 1  \right),\left( 0, 0, 1, 1, 1, 1, 0, 0 \right) \right)\\
    
    \hline
    
    \mathcal{M}_{575}&  158&\left( \left(1, 1, 1, 1, 1, 1, 0, 1  \right),\left( 0, 0, 0, 1, 1, 0, 1, 1 \right) \right)\\
    
    \hline
  \end{array}$
  \caption{$15$ matroids and a sample of directions for which they fail the hyperbolicity test.}
  \label{directions}}
\end{table}

For each of the remaining $7$ matroids $M$,  using the Julia package ``HomotopyContinuation.jl'' by Breiding and Timme \cite{inbook}, we found a point $x\in \RR^{6}$ for which $\Delta_{67}(h_{M})(x)< 0$ which confirms that they do not have the half-plane property as well. More precisely, we computed all critical points of the Rayleigh difference and plugged in nearby rational points. For the list of points where the Rayleigh difference is negative, see Table~\ref{negativepts}.
This gives us the following main result of this section.

\begin{table}{
  $ \begin{array}{|c|c|}
    \hline
    
    \textbf{ Matroids}& \mathbf{x\in\RR^{6} }\textbf{ s.t. }\mathbf{ \Delta_{6,7}(h_{M})(x)<0}\\
    
    \hline
    \mathcal{M}_{424} & \left(4,30,1,7,-32,-4\right)\\
    \hline
    \mathcal{M}_{430} & \left(80,19,-31,-31,-17,-4\right)\\
    \hline
    \mathcal{M}_{431} & \left(60,27,-90,-22,27,5\right)\\
    \hline
    \mathcal{M}_{436} & \left(40,309,-40,-306,9,73\right)\\
    \hline
    \mathcal{M}_{462} & \left(20,55,-11,-4,-52,-19\right)\\
    \hline
    \mathcal{M}_{463} & \left(30,399,-111,-10,-368,-28\right)\\
    \hline
    \mathcal{M}_{550} & \left(50,-25,94,-45,-142,66\right)\\
    \hline
  \end{array}$
  \caption{Seven matroids and  points $x\in \RR^{6}$ at which one of their Rayleigh differences is negative.}
  \label{negativepts}}
\end{table}
\begin{thm}\label{thm:forbmin}
 There are exactly $22$ matroids on $8$ elements all of whose minors have the half-plane property but which do not have the half-plane property themselves. These $22$ matroids are the matroids $\mathcal{M}_{k}$ for $k$ in the following list:
\begin{align*}
    & \left\{ 424, 430, 431, 435, 436, 437, 439, 443, 450, 455, 460, 461, 462, 463, 465,\right.\\
    &\left. 466, 467, 548, 549, 550, 570, 575\right\}.
\end{align*}
All of them are sparse paving matroids of rank $4$. Furthermore, all of these matroids are self-dual except for the following dual pairs: $\mathcal{M}_{424}=\mathcal{M}_{436}^*$, $\mathcal{M}_{430}=\mathcal{M}_{548}^*$, $\mathcal{M}_{450}=\mathcal{M}_{549}^*$ and $\mathcal{M}_{455}=\mathcal{M}_{550}^*$.
\end{thm}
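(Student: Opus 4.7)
The plan is to carry out the reduction and computation in essentially the order already laid out in the body of the section, using the criteria recalled from \Cref{thm:rayleighbrand} and \Cref{thm:wwcrit} together with the enumeration of matroids on $8$ elements provided by the Macaulay2 package ``Matroids''. First, I would reduce the problem: since HPP is closed under duality, direct sums, and the adjunction of loops or parallel elements, and since every rank $\leq 2$ matroid has HPP, it suffices to look at simple connected matroids on $8$ elements of rank $3$ or $4$. Enumerating \texttt{allMatroids(8)} and filtering gives $659$ such matroids. HPP is also minor-closed, so I would discard every matroid having one of the known $7$-element forbidden minors ($F_7$, its three relaxations, $M(K_4)+e$, and their duals) as a minor; this leaves the $309$ candidates mentioned in the text, each of whose proper minors satisfies HPP.

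Next, to establish HPP for as many of these $309$ candidates as possible, I would invoke \Cref{thm:wwcrit}: it is enough to exhibit a single pair of indices $i\neq j$ with $\Delta_{ij}(h_M)\geq 0$ on $\RR^n$. For each candidate and each pair $(i,j)$ I would run the \texttt{SumsOfSquares} package in Macaulay2 on $\Delta_{ij}(h_M)$; whenever the SDP solver returns a Gram matrix that can be rounded to a rational positive semidefinite matrix, one obtains a symbolic SOS certificate, hence a rigorous proof of HPP. This step is expected to handle $287$ of the $309$ candidates, and incidentally shows that $256$ of them are SOS-Rayleigh. (Also note that no rank $3$ candidates remain after this step, so all new forbidden minors will have rank $4$.)

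For the $22$ remaining matroids I would split into two subcases. For matroids where HPP already fails along a rational direction, I would test numerically whether $h_M(te-v)\in\RR[t]$ has only real roots for all $(e,v)\in\{0,1\}^8\times\{0,1\}^8$: since roots depend continuously on coefficients, a single $(e,v)$ producing a non-real root is a rigorous obstruction to hyperbolicity, hence to HPP. This is expected to eliminate $15$ of the $22$ matroids, with explicit witnessing pairs $(e,v)$ as in Table~\ref{directions}. For the remaining $7$ matroids, HPP cannot fail along a $0/1$ direction, so I would instead search directly for a point $x\in\RR^n$ where some Rayleigh difference $\Delta_{ij}(h_M)$ is negative; by the equivalence (1)$\Leftrightarrow$(2) of \Cref{thm:rayleighbrand}, this contradicts HPP. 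Concretely, using \texttt{HomotopyContinuation.jl} I would compute the critical points of $\Delta_{67}(h_M)$ numerically, perturb to nearby rational points, and then verify symbolically that $\Delta_{67}(h_M)$ is negative there; the rational witnesses displayed in Table~\ref{negativepts} serve as the certificates.

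The combinatorial postscript of the theorem—that all $22$ forbidden minors are sparse paving of rank $4$, and that the asserted duality pairs $\mathcal{M}_{424}=\mathcal{M}_{436}^{*}$, $\mathcal{M}_{430}=\mathcal{M}_{548}^{*}$, $\mathcal{M}_{450}=\mathcal{M}_{549}^{*}$, $\mathcal{M}_{455}=\mathcal{M}_{550}^{*}$ hold while the remaining ones are self-dual—is a direct computation from the explicit matroid data, which can be read off in Macaulay2 via the \texttt{Matroids} package. The main obstacle, and where genuine care is required, is the middle step: producing \emph{rational} SOS Gram matrices that validate the $287$ positive cases (a purely numerical SDP certificate would not suffice to make the classification rigorous), and producing \emph{exact} rational points refuting HPP for the last $7$ matroids, since the SOS test cannot separate ``not SOS'' from ``not nonnegative''. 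Both obstacles are overcome by the ancillary files that record the certificates, so the proof reduces to verifying finitely many explicit algebraic identities and inequalities.
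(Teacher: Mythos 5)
Your proposal follows the paper's own argument essentially step for step: the same reduction to $309$ simple connected candidates of rank $3$ or $4$, the same use of \Cref{thm:wwcrit} with rational SOS Gram certificates for $287$ matroids, the same hyperbolicity test along $\{0,1\}^8$ directions for $15$ of the remaining $22$, and the same homotopy-continuation search for rational points with negative Rayleigh difference for the last $7$. The approach and the emphasis on exact (rational) certificates match the paper, so no further comparison is needed.
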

We now take a closer look at some of the $22$ matroids from \Cref{thm:forbmin}. In the following, we will mostly stick to the catalog of \cite{oxley} regarding special matroids. Further $\Ext(M)$ of a matroid $M$ refers to the free extension of $M$ by an element $e$, see \cite[page 270]{oxley}. The co-extension $\CoExt(M)$ of $M$ refers to the dual of the extension of $M^{*}$, such that $\CoExt/e=M$. Note that $\Ext(M)$ and $\CoExt(M)$ can be obtained by using the SageMath commands \texttt{M.extension()} and \texttt{M.coextension()} respectively.

\begin{ex}
 The matroid $\mathcal{M}_{430}$ is the co-extension $\CoExt(P_7)$ of the ternary 3-spike $P_7$. Both are depicted in \Cref{fig:p7endcoext}. Further minors of $\mathcal{M}_{430}$ include $P_{6}$, $Q_{6}$, $R_{6}$, $\CoExt(R_{6})$ and $\Ext(Q_{6})$ which all have the half-plane property. Its dual is the matroid $\mathcal{M}_{548}$.
\end{ex}

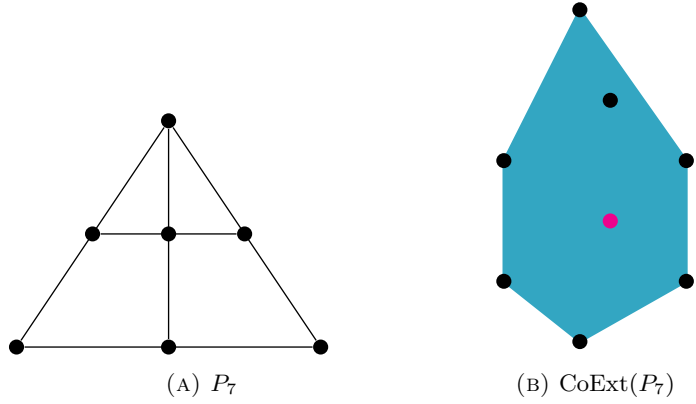
\begin{figure}
  \begin{center}
    \begin{subfigure}[b]{0.4\linewidth}
      \begin{tikzpicture}[every node/.style={fill=black, inner sep=2pt, circle}]
        \node(a) at (0,0){};
        \node(b) at (2,3){};
        \node(c) at (4,0){};
        \node (ab) at ($(a)!0.5!(b)$) {};
        \node (bc) at  ($(b)!0.5!(c)$){};
        \node (ac) at  ($(a)!0.5!(c)$){};
        \node (abbc) at  ($(ab)!0.5!(bc)$){};
        \draw [line width=0.5pt](a)--(b)--(c)--(a);
        \draw [line width=0.5pt](ab)--(bc);
        \draw[line width=0.5pt] (b)--(ac);
      \end{tikzpicture}
      \caption{$P_{7}$}
    \end{subfigure}
    \begin{subfigure}[b]{0.4\linewidth}
      \definecolor{qqqqff}{rgb}{0.20,0.65,0.76}
      \definecolor{ffvvqq}{rgb}{0.17,0.70,0.76}
      \begin{center}
         \begin{tikzpicture}[x=0.4cm, y=0.4cm, line width=1pt, every node/.style={fill=black, inner sep=2pt, circle}]
         \coordinate (ca) at (0,0){};
         \coordinate(cb) at (2.5,-2){};
         \coordinate (cc) at (6,0){};
         \coordinate[color=magenta](cd) at (3.5,2){};
         \coordinate(ce) at (6,4){};
         \coordinate (cf) at (3.5,6){};
         \coordinate(cg) at (0,4){};
         \coordinate (ch) at (2.5,9){};
         
        \draw[fill=qqqqff, color=qqqqff, opacity=0.3] (ca)--(cb)--(cc)--(cd)--(ca);
        \draw[fill=qqqqff, color=qqqqff, opacity=0.3](cc)--(cd)--(cf)--(ce)--(cc);
        \draw[fill=qqqqff, color=qqqqff, opacity=0.3](cb)--(ch)--(cf)--(cd)--(cb);
        \draw[fill=qqqqff, color=qqqqff, opacity=0.3](ca)--(cd)--(cf)--(cg)--(ca);
        \draw[ fill=qqqqff, color=qqqqff, opacity=0.3](cg)--(cd)--(ce)--(ch)--(cg);
        
       \node (a) at (0,0){};
        \node (b) at (2.5,-2){};
        \node (c) at (6,0){};
        \node[color=magenta](d) at (3.5,2){};
        \node(e) at (6,4){};
        \node (f) at (3.5,6){};
        \node(g) at (0,4){};
        \node (h) at (2.5,9){};
       \end{tikzpicture}
       \end{center}
      \caption{CoExt($P_{7}$)}
    \end{subfigure}
  \end{center}
  \caption{The matroids $P_{7}$ and $\mathcal{M}_{430}=\CoExt(P_7)$.}\label{fig:p7endcoext}
\end{figure}

\begin{ex}\label{ex:p8ppp}
 The rank-$4$ matroid $P_8$ is represented over $\QQ$ by the matrix:
\[
 \begin{pmatrix}
  1&0&0&0&0&1&1&2 \\
  0&1&0&0&1&0&1&1 \\
  0&0&1&0&1&1&0&1 \\
  0&0&0&1&2&1&1&0 \\
 \end{pmatrix}
\]
 The circuit hyperplanes of $P_8$ are $\{1,2,3,8\}$, $\{1,2,4,7\}$, $\{1,3,4,6\}$, $\{2,3,4,5\}$, $\{1,4,5,8\}$, $\{2,3,6,7\}$, $\{1,5,6,7\}$, $\{2,5,6,8\}$, $\{3,5,7,8\}$ and $\{4,6,7,8\}$.
 We denote by $P_8'$, $P_8''$ and $P_8'''$ the matroids obtained by relaxing the first, the first and the second, resp. all three of the following circuit hyperplanes: $\{1,4,5,8\}$, $\{2,3,6,7\}$ and $\{4,6,7,8\}$. It was shown in \cite[Ex.~11.8]{Choe_2004} that $P_8$, $P_8'$ and $P_8''$ do not have the half-plane property. In our list, these are the matroids $\mathcal{M}_{575}$, $\mathcal{M}_{570}$ and $\mathcal{M}_{467}$. Furthermore, the matroid $\mathcal{M}_{466}$ is the relaxation ${P_{8}^{\prime\prime\prime}}$ which does not have the half-plane property as well. In \cite[Prop.~4]{matroids9} the matroid ${P_{8}^{\prime\prime\prime}}$ is denoted by $P_3$ and it is shown that it is not representable over any field. Further note that in \cite{oxley} the matroid $P_8''$ is denoted by $P_8^=$. It is an excluded minor for being representable over the field of $4$ elements.
\end{ex}

\begin{ex}
 The matroid $\mathcal{M}_{431}$ has only four circuit hyperplanes: $\{1, 2, 3, 4\}$, $\{1, 2, 5, 6\}$, $\{1, 3, 5, 7\}$ and $\{2, 3, 5, 8\}$.
\end{ex}

Any matroid having one of our $22$ matroids as a minor does not have the half-plane property.

\begin{ex} 
 The \emph{Extended Ternary Golay code} is the matroid represented by the following generating matrix of the Extended Ternary Golay code over the field $\mathbb{F}_{3}$
   \[\left(\begin{array}{rrrrrrrrrrrr}
       1 & 0 & 0 & 0 & 0 & 0 & 1 & 1 & 1 & 2 & 2 & 0 \\
       0 & 1 & 0 & 0 & 0 & 0 & 1 & 1 & 2 & 1 & 0 & 2 \\
       0 & 0 & 1 & 0 & 0 & 0 & 1 & 2 & 1 & 0 & 1 & 2 \\
       0 & 0 & 0 & 1 & 0 & 0 & 1 & 2 & 0 & 1 & 2 & 1 \\
       0 & 0 & 0 & 0 & 1 & 0 & 1 & 0 & 2 & 2 & 1 & 1 \\
       0 & 0 & 0 & 0 & 0 & 1 & 0 & 1 & 1 & 1 & 1 & 1
     \end{array}\right).
  \]
 The supports of the codewords form a Steiner System $S(5,6,12)$, i.e., a $12$ element set with a collection $\mathcal{D}$ of $6$ element subsets (called blocks) such that every $5$ element subset of $S$ is contained in exactly one block. Here, a ground set $E$ with $12$ elements with the collection $\mathcal{D}$ of hyperplanes define a matroid. It does not have the half-plane property since it has $P_8$ as a minor.
\end{ex}

Further matroids having one of our $22$ matroids as a minor include the co-extensions $\CoExt(P_{8})$, $\CoExt(J)$, $\CoExt(T_{8})$, $\CoExt(S_{8})$, $\CoExt(N1)$, $\CoExt(N2)$, $\CoExt(AG(2,3))$, the extensions  $\Ext(P_{8})$, $\Ext(T_{8})$, $\Ext(J)$, $\Ext(N1)$ and the matroid $AG(3,3)$. These all do not have the half-plane property.

\begin{ex}
 The Pappus and the non-Pappus matroid are of rank $3$ on $9$ elements \cite[Ex.~1.5.15]{oxley}. They do not have the half-plane property by \cite[Ex.~11.9]{Choe_2004}. Their only two non-isomorphic deletions were proven to have the half-plane property in \cite{Wagner_2009}. Thus these two matroids are minor-minimal with respect to the half-plane property. In \cite[Ex.~11.10]{Choe_2004} it is shown that the matroid $(\textnormal{non-Pappus}\setminus 9)+e$ obtained by adding a new element freely to the contraction of the non-Pappus matroid by an element contained in two circuits does not have the half-plane property either. The authors of \cite{Choe_2004} suspect that this matroid is also minor-minimal with respect to the half-plane property. This is indeed the case: By \Cref{thm:forbmin} there are no minor-minimal matroids of rank $3$ on $8$ elements. Thus minor-minimality follows from the fact that none of the five matroids of rank $3$ on $7$ elements that do not have the half-plane property is a minor of $(\textnormal{non-Pappus}\setminus 9)+e$. In Section~\ref{9}, we confirm that they are the only minimal forbidden minors for the half-plane property of rank $3$ on $9$ elements.
\end{ex}

\begin{rem}
 A binary matroid has the half-plane property if and only if it is a regular matroid and, similarly, a ternary matroid has the half-plane property if and only if it is a $\sqrt[6]{1}$-matroid \cite[\S15.8]{oxley}. In particular, the list of excluded minors for the classes of binary and ternary matroids with the half-plane property is finite. In this context Oxley poses the problem of finding all excluded minors for the class of quaternary matroids with the half-plane property \cite[Prob.~15.8.8]{oxley}. In addition to the excluded minors for quaternary matroids, see \cite[Thm.~6.5.9]{oxley}, this list contains the Fano matroid $F_7$, its relaxation $F_7^{--}$, their duals and the matroid $
 P_8'$ from \Cref{ex:p8ppp}. Any quaternary matroid on at most $8$ elements that does not have the half-plane property has one of these $5$ matroids as a minor. Further excluded minors for the class of quaternary matroids with the half-plane property are the Pappus matroid and its dual.
\end{rem}

\begin{rem}
  A matroid $M$ with ground set $E$ is called \emph{Rayleigh}, if $\Delta_{ij}(h_{M})(x)\geq0$ for all $i,j\in E$ and all $x$ in the \emph{nonnegative orthant}. It is clear from \Cref{thm:rayleighbrand} that the half-plane property implies Rayleigh. The converse is not true. For instance every matroid of rank $3$ is Rayleigh \cite{MR2134193}. When searching for critical points of Rayleigh differences using the Julia package ``HomotopyContinuation.jl'', we found that the matroids $\mathcal{M}_k$ for $k$ from the following list are not Rayleigh: $$\{768,816,821,825,878,879,882,883,891,894,895,896,910,911,912\}.$$Note that the matroid $\mathcal{M}_{912}$ is the matroid $S_8$ \cite[p.~648]{oxley}.
  
 Furthermore, for fixed indices $i,j$, we can express $h_{M}$ as $h_{M}=ax_{i}x_{j}+bx_{i}+cx_{j}+d$ where $a,b,c,d\in\RR[x_{1},\dots,x_{n}]$ are polynomials that do not depend on $x_{i}$ and $x_{j}$. Then we have $\Delta_{i,j}h_{M}=ad-bc$, and the supremum of the ratio $\frac{bc}{ad}(x)$ over all $x$ in the nonnegative orthant and all $i,j$ is called the \emph{correlation constant} of the matroid $M$. If $M$ is not Rayleigh, then the correlation constant is larger than $1$. It was conjectured in \cite{Huh-21} that the correlation constant cannot exceed $\frac{8}{7}$. Our computations support their conjecture, see also the ancillary file \texttt{Appendix.pdf}.
\end{rem}

\subsection{SOS-Rayleigh and weakly determinantal matroids}\label{sosdet}%\vspace{1em}
In a celebrated article Br\"and\'en \cite{Branden2011} disproved a predecessor of the generalized Lax Conjecture by showing that the V\'amos matroid $V_8$ is not weakly determinantal although it has the half-plane property \cite{Wagner_2009}. Further such matroids were constructed in \cite{Amini2018,burton2014real}. In this section we will add some further examples to this list. The examples from the literature rely on the following criterion:

\begin{thm}[Br\"and\'en \cite{Branden2011}]
 Let $M$ be a weakly determinantal matroid on the ground set $[n]$. Then its rank function $\rank$ satisfies the \emph{Ingleton inequalities}: \begin{align*}
    & \rank(P_{1}\cup P_{2})+ \rank(P_{1} \cup P_{3})+ \rank(P_{1}\cup P_{4})+\rank(P_{2}\cup P_{3})+\rank(P_{2}\cup P_{4})\\
  \geq & \rank(P_{1})+ \rank(P_{2})+ \rank(P_{1}\cup P_{2}\cup P_{3})+ \rank(P_{1}\cup P_{2} \cup P_{4})+\rank(P_{3} \cup P_{4})
\end{align*} 
 for all $P_1,P_2,P_3,P_4\subseteq[n]$.
\end{thm}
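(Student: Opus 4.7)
The plan is to pass from a weakly determinantal representation of $h_M^r$ to a linear representation of an ``expanded'' matroid $\tilde{M}$, and then to transfer the Ingleton inequalities from $\tilde{M}$ (where they hold because $\tilde{M}$ is representable over $\RR$) back to $M$ via a rank correspondence.

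First, using that each $A_i$ is positive semidefinite, I would factor $A_i = V_i V_i^t$ with $V_i$ of size $d \times r_i$, where $d$ is the common size of the $A_i$ and $r_i = \rk(A_i)$. Denote by $v_{ij}$, $j \in [r_i]$, the columns of $V_i$, and let $\tilde{M}$ be the linear matroid on $\tilde{E} = \bigsqcup_{i=1}^n [r_i]$ represented by these vectors. Since $\tilde{M}$ is linearly representable, its rank function satisfies the Ingleton inequalities by the classical linear-algebra argument of Ingleton. The heart of the proof is then a rank correspondence: for every $S \subseteq [n]$, writing $\tilde{S} := \bigsqcup_{i \in S}[r_i]$, one has $\rk_{\tilde{M}}(\tilde{S}) = r \cdot \rk_M(S)$.

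To prove the correspondence I would concatenate the $V_i$ into a single matrix $V = [V_1 \mid \cdots \mid V_n]$ and write $\sum_i x_i A_i = V D V^t$, where $D$ is the diagonal matrix whose entries equal $x_i$ in each of the $r_i$ positions corresponding to columns of $V_i$. Applying the Cauchy--Binet formula to $\det(V D V^t) = h_M(x)^r$ yields a sum over $d$-subsets $I$ of $\tilde{E}$ of $\det(V_I)^2 \prod_{(i,j) \in I} x_i$, whose support is exactly the collection of bases of $\tilde{M}$. I would then substitute $x_i = t$ for $i \in S$ and $x_i = 1$ otherwise: the left-hand side is a univariate polynomial of degree $r \cdot \max\{|B \cap S| : B \text{ basis of } M\} = r \cdot \rk_M(S)$, while the right-hand side has degree $\max\{|I \cap \tilde{S}| : I \text{ basis of } \tilde{M}\} = \rk_{\tilde{M}}(\tilde{S})$, yielding the claim.

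The final step is routine. Given $P_1, P_2, P_3, P_4 \subseteq [n]$, I apply the Ingleton inequality for $\tilde{M}$ to the sets $\tilde{P}_k := \bigsqcup_{i \in P_k}[r_i]$, observing that $\tilde{P}_i \cup \tilde{P}_j = \bigsqcup_{k \in P_i \cup P_j}[r_k]$ (and similarly for triple unions). The correspondence rewrites every $\rk_{\tilde{M}}$ term as $r$ times the corresponding $\rk_M$ of a union of $P_k$'s, and dividing through by $r$ yields the Ingleton inequality for $M$. The main obstacle I expect is the rank correspondence; once it is in hand, the rest reduces to the standard matroid identity $\rk(T) = \max\{|B \cap T| : B \text{ basis}\}$ together with the classical Ingleton inequality for linearly representable matroids.
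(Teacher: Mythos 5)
Correct. The paper itself gives no proof of this statement (it is quoted from Br\"and\'en's article), and your argument is essentially Br\"and\'en's original one: factor each positive semidefinite $A_i$ into rank-one summands, pass to the real-representable column matroid $\tilde M$, establish the rank correspondence $\rk_{\tilde M}(\tilde S)=r\cdot\rk_M(S)$, and divide Ingleton's inequality for $\tilde M$ by $r$. Your degree-comparison proof of the correspondence is sound because both sides of the Cauchy--Binet identity have only nonnegative contributions (coefficients $1$ on the left, $\det(V_I)^2\geq 0$ on the right), so no cancellation can lower the degree in $t$, and $\rk(T)=\max\{|B\cap T|\colon B\text{ a basis}\}$ applies on both sides; the only slight imprecisions are the claim that the support is ``exactly'' the set of bases of $\tilde M$ (distinct bases may yield the same monomial, which is harmless here) and the implicit assumption $r\geq 1$, which is what the definition of weakly determinantal intends.
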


For the V\'amos matroid there exist disjoint subsets $P_1,P_2,P_3,P_4\subseteq[8]$ of size $2$ that violate this inequality. This motivates the following definition.

\begin{Def}
  A sparse paving matroid $M$ on a ground set $E$ of rank $r$ is called \emph{V\'amos-like} if there exist pairwise disjoint sets $P_{1},P_{2}, P_{3},P_{4},K$ where $|P_{1}|=|P_{2}|=|P_{3}|=|P_{4}|=2$ and $|K|=r-4$ such that $K\cup P_{i}\cup P_{j}$ for $i<j, (i,j)\neq (3,4)$ are non-bases, while $K\cup P_{3} \cup P_{4}$ is a basis.  
\end{Def}

In \cite{ingleton} it was shown that V\'amos-like matroids violate the Ingleton inequalities. Mayhew and Royle show that among the matroids on eight elements \cite{matroids9}, there are $44$ non-representable matroids and $39$ of them are V\'amos-like. The remaining $5$ matroids are four relaxations of $P_{8}$ and a relaxation of $L_{8}$. Our tests concluded that out of these $44$ matroids, which are not representable over any field, only the V\'amos matroid itself has the half-plane property. Furthermore, the minor-minimal matroids from \Cref{thm:forbmin} all satisfy Ingleton's inequalities.

For finding further instances of matroids with the half-plane property that are not weakly determinantal, we use the following criterion.

\begin{thm}[Cor.~4.3 in \cite{Kummer-2013-hyperbolic-polynomials-interlacers}]\label{sosR-weak}
  If a homogeneous, multiaffine polynomial is weakly-determinantal, then it is SOS-Rayleigh.
\end{thm}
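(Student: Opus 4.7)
The plan is to first handle the special case where $h$ itself admits a determinantal representation, deriving an explicit sum-of-squares identity for $\Delta_{ij}(h)$, and then reduce the weakly determinantal case to this special case via the rank-one multilinearization used in the proof of \Cref{derspec}.

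\textbf{Step 1 (the determinantal case).} Suppose $h=\det(L)$ with $L=\sum_{k=1}^{n}x_{k}A_{k}$ and $A_{k}\succeq 0$. Multiaffineness of $h$ forces $\rk(A_{k})\le 1$, so $A_{k}=a_{k}a_{k}^{T}$, giving $L=A\,\operatorname{diag}(x)\,A^{T}$ for the $d\times n$ matrix $A$ with columns $a_{k}$. The Cauchy--Binet formula then yields
\[
 h=\sum_{|S|=d}\det(A_{S})^{2}\,\textstyle\prod_{k\in S}x_{k}.
\]
Writing $h=\alpha x_{i}x_{j}+\beta x_{i}+\gamma x_{j}+\delta$ with $\alpha,\beta,\gamma,\delta$ independent of $x_{i},x_{j}$, a direct computation gives $\Delta_{ij}(h)=\beta\gamma-\alpha\delta$. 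I would then expand the double sum $\beta\gamma-\alpha\delta$ over ordered pairs of bases $S,T$ and regroup using the Desnanot--Jacobi (Dodgson condensation) identity, equivalently the Plücker relations on the Grassmannian. This exhibits $\beta\gamma-\alpha\delta$ as a sum of squared $2\times 2$ minor-combinations of the form $\det(A_{I\cup\{i\}})\det(A_{J\cup\{j\}})-\det(A_{I\cup\{j\}})\det(A_{J\cup\{i\}})$, each multiplied by a squarefree monomial in the remaining variables, giving an explicit SOS certificate for $\Delta_{ij}(h)$.

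\textbf{Step 2 (reduction).} Now suppose only that $h^{r}=\det(L)$ with $L=\sum_{k}x_{k}A_{k}$, $A_{k}\succeq 0$ of rank $r_{k}\ge 1$. Following the proof of \Cref{derspec}, decompose $A_{k}=\sum_{j=1}^{r_{k}}a_{kj}a_{kj}^{T}$ and introduce fresh variables $x_{kj}$ to form the multiaffine polynomial
\[
 \tilde p=\det\Bigl(\textstyle\sum_{k,j}x_{kj}\,a_{kj}a_{kj}^{T}\Bigr),
\]
satisfying $\tilde p\bigl|_{x_{kj}=x_{k}}=h^{r}$. By Step 1, every Rayleigh difference of $\tilde p$ in the variables $x_{kj}$ is SOS.

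\textbf{Step 3 (descent, the main obstacle).} It remains to transfer the SOS certificates for $\tilde p$ into one for $\Delta_{ij}(h)$. A naïve substitution $x_{kj}\mapsto x_{k}$ combined with the identity $\Delta_{ij}(h^{r})=r\,h^{2r-2}\,\Delta_{ij}(h)$ only yields an SOS representation of $r\,h^{2r-2}\,\Delta_{ij}(h)$, from which one cannot ``divide out'' the square $h^{2r-2}$ while staying inside the SOS cone. The clean resolution, which I expect to be the technical heart of the argument in \cite{Kummer-2013-hyperbolic-polynomials-interlacers}, is to apply \Cref{prm2} iteratively: differentiating $\tilde p$ in all but one of the duplicate variables $x_{kj}$ for each $k$ produces, along the way, multiaffine polynomials that retain genuine determinantal representations, whose Desnanot--Jacobi certificates from Step 1 can be tracked through the successive substitutions to yield—after a final specialization—a sum-of-squares decomposition of $\Delta_{ij}(h)$ itself rather than of $\Delta_{ij}(h^{r})$. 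The careful bookkeeping of minors through this iterated descent is the main obstacle of the proof.
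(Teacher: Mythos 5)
First, note that the paper does not prove this statement at all: it is quoted verbatim as Corollary~4.3 of \cite{Kummer-2013-hyperbolic-polynomials-interlacers}, so the comparison has to be with the proof in that reference rather than with anything in the present text. Your Steps 1 and 2 are fine and standard: multiaffineness does force the $A_k$ to have rank at most one (given $h\neq 0$), Cauchy--Binet gives the basis expansion, and for a rank-one pencil one even has the single square $\Delta_{ij}(h)=\bigl(a_i^{T}\operatorname{adj}(M(x))\,a_j\bigr)^2$, which is exactly the equivalence ``determinantal representation $\iff$ $\Delta_{ij}$ is a square'' recorded in the diagram at the end of Section~2. The computation $\Delta_{ij}(h^r)=r\,h^{2r-2}\Delta_{ij}(h)$ is also correct.

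The genuine gap is Step 3, which you yourself flag as the heart of the argument but do not carry out, and the mechanism you propose does not plausibly close it. Iterating \Cref{prm2} on the duplicated variables of $\tilde p$ and then specializing only ever produces (constant multiples of) partial derivatives and restrictions of $h^{r}$, and every such polynomial still carries the parasitic factor $h^{r-1}$ (e.g.\ $\partial_k(h^r)=r\,h^{r-1}\partial_k h$); no amount of bookkeeping of minors through that descent isolates $\Delta_{ij}(h)$ from $h^{2r-2}\Delta_{ij}(h)$. The missing idea is a division argument that uses stability: since $h$ is multiaffine it is square-free, each irreducible factor of a stable polynomial is again stable and hence has nonsingular real zeros, so its real variety is Zariski dense in its complex hypersurface. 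Consequently, in any representation $\sum_\ell s_\ell^2=r\,h^{2r-2}\Delta_{ij}(h)$ every $s_\ell$ vanishes on the real zero set of each irreducible factor $p$ of $h$, hence is divisible by $p$; dividing repeatedly (the exponent of $p$ on the right stays positive until $h^{2r-2}$ is exhausted) yields an SOS representation of $\Delta_{ij}(h)$ itself. This is the step your general worry about ``dividing in the SOS cone'' is resolved by, and it is where hyperbolicity enters; for arbitrary nonnegative cofactors the division would indeed fail. Incidentally, with this lemma in hand the rank-one multilinearization of Steps 1--2 is unnecessary: for any PSD pencil $M(x)=\sum_k x_kA_k$ with $A_i=\sum_s b_sb_s^{T}$, $A_j=\sum_t c_tc_t^{T}$, one has the identity $\Delta_{ij}(\det M)=\tr\bigl(\operatorname{adj}(M)A_i\operatorname{adj}(M)A_j\bigr)=\sum_{s,t}\bigl(b_s^{T}\operatorname{adj}(M)\,c_t\bigr)^2$, so $\Delta_{ij}(h^r)$ is manifestly a sum of squares without any detour through multiaffine polynomials.
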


Indeed, we found $14$ matroids, including the V\'amos matroid $\mathcal{M}_{502}$, for which some Rayleigh differences are a sum of squares but not all. This shows that these have the half-plane property but are not weakly determinantal.

\begin{thm}
 The matroids $\mathcal{M}_{k}$ for $k$ in
\[
  \left\{ 409, 413, 414, 415, 417, 418, 419, 438, 440, 445, 498, 500, 501, 502 \right\}
\] have the half-plane property but are not SOS-Rayleigh. In particular, these matroids are not weakly determinantal. 
\end{thm}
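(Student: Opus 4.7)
The proof will split into three parts: verifying HPP, certifying the failure of SOS-Rayleigh, and deducing the failure of weak determinantality.

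First, I would note that the fourteen indices listed form a subset of the $287$ matroids identified earlier in the section for which the SumsOfSquares computation produced a pair of indices $(i,j)$ with $\Delta_{ij}(h_{M})$ a sum of squares, certified by a Gram matrix with rational entries. Every proper minor of such an $M$ has the half-plane property: this is inherited from the classification of matroids on at most $7$ elements and from \Cref{thm:forbmin}, which tells us exactly which matroids on $8$ elements are minor-minimal without HPP. Hence \Cref{thm:wwcrit} of Wagner--Wei applies and each of these fourteen matroids has the half-plane property, with the corresponding rational Gram matrix serving as a symbolic certificate.

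Second, to rule out the SOS-Rayleigh property I would exhibit, for each matroid $M$ in the list, a distinct pair of indices $(i',j')$ together with a \emph{symbolic} certificate that $\Delta_{i'j'}(h_{M})$ is not a sum of squares. The natural certificate is dual: produce a symmetric rational matrix $Y\succeq0$, indexed by a fixed monomial basis, such that pairing $Y$ with the coefficient vector of $\Delta_{i'j'}(h_{M})$ yields a strictly negative value. Any SOS decomposition $\sum_{\ell}s_{\ell}^{2}$ would give $\sum_{\ell}s_{\ell}^{\top}Ys_{\ell}\geq 0$, contradicting the separator, so $\Delta_{i'j'}(h_{M})$ lies outside the SOS cone. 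Such a $Y$ is obtained by running the SDP feasibility program for the Gram matrix, detecting infeasibility, and extracting the dual slack matrix.

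Third, the failure of weak determinantality is then immediate from the contrapositive of \Cref{sosR-weak}: every weakly determinantal multiaffine polynomial is SOS-Rayleigh.

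The main obstacle is the second step: turning numerical SDP output into an \emph{exact} rational moment matrix that both is positive semidefinite and strictly separates $\Delta_{i'j'}(h_{M})$ from the cone of squares. In the SOS direction the rounding is forgiving, because a strictly positive definite Gram matrix stays so under small rational perturbations; in the dual direction, the separator wants to sit on the boundary of the PSD cone, and naive rounding can either break $Y\succeq0$ or destroy the strict inequality $\langle Y,\Delta_{i'j'}(h_{M})\rangle<0$. A workable remedy is to first identify the active face of the PSD cone from the numerical solution, then solve an exact linear-algebra problem on that face to certify both positivity and separation; the verified certificates can then be supplied in the ancillary files \texttt{Appendix.pdf}, \texttt{M2Code.m2}, and \texttt{SageCode.sage} that accompany the paper.
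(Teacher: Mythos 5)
Your overall architecture matches the paper's: (1) each of the fourteen matroids admits one pair $(i,j)$ with a rational Gram certificate that $\Delta_{ij}(h_M)$ is a sum of squares, and since every proper minor lives on at most $7$ elements and avoids the known forbidden minors, \Cref{thm:wwcrit} gives the half-plane property (note that \Cref{thm:forbmin} is not actually needed here — proper minors of an $8$-element matroid have at most $7$ elements); (2) a symbolic dual certificate shows some other $\Delta_{i'j'}(h_M)$ is not a sum of squares; (3) the contrapositive of \Cref{sosR-weak} rules out weak determinantality. The only genuine divergence is the shape of the certificate in step (2). You propose the standard strict separator: a rational $Y\succeq 0$ with $\langle Y,\Delta_{i'j'}(h_M)\rangle<0$, rationalized by facial reduction because the separator tends to sit on the boundary of the PSD cone. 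Two remarks. First, for your contradiction $\sum_\ell s_\ell^{\top}Ys_\ell\geq 0$ to apply, $Y$ must respect the moment structure (equivalently, be orthogonal to the Gram-kernel matrices $G_i$, $i\geq 1$), since otherwise $\langle Y,G\rangle$ depends on the chosen Gram representative of $\Delta_{i'j'}(h_M)$; you should state this. Second, the paper sidesteps your rounding obstacle with \Cref{Gram}: it exhibits a rational \emph{positive definite} $A$ with $\tr(AG_i)=0$ for \emph{all} $i=0,\dots,k$, i.e.\ a functional vanishing exactly on $\Delta_{i'j'}(h_M)$ rather than being negative on it. Such a certificate exists here precisely because the Rayleigh difference is nonnegative (the matroid has HPP), so one may separate with equality; the strictness is then carried by positive definiteness, an open condition, while the constraints $\tr(AG_i)=0$ are linear with rational data, so exact rational certificates are obtained by straightforward projection and rounding, without any facial-reduction step. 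Both certificates are valid; the paper's is simply the more robust one to make exact, which is the point of \Cref{Gram}.
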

For each of the matroids $M$ in the list, we produce symbolic certificates for the existence of a pair $i,j$ such that $\Delta_{i,j}h_{M}$ is not an sum of squares by using the following argument, and Lemma~\ref{Gram}. Code for producing the certificates can be found in the ancillary files \texttt{M2Code.m2} and \texttt{Appendix.pdf}.

A  homogeneous polynomial $h\in\RR[x_{1},\dots,x_{n}]$ of degree $2d-2$ is a sum of squares if there exists a symmetric positive semidefinite matrix $G\in\RR^{\binom{n}{d-1}\times\binom{n}{d-1}}$ (Gram matrix) such that $h=m^{t}Gm$, where $m\in\RR^{\binom{n}{d-1}} $ is the vector of all multiaffine monomials of degree $d-1$. Therefore, in order to prove that $h$ is not a sum of squares, one needs to show that, the Gram spectrahedron
\[S_{G}:=\left\{ \lambda\in\RR^{k}: G_{0}+\lambda_{1}G_{1}+\dots \lambda_{k}G_{k}\succeq 0\right\}
\]
is empty.
\begin{lem} \label{Gram} Let $h\in\RR[x_{1},\dots x_{n}]$ be a non-zero homogeneous polynomial of degree $2d-2$, and $S_{G}$ be its Gram spectrahedron defined by the pencil
  \[
    G_{0}+\lambda_{1}G_{1}+\dots \lambda_{k}G_{k}\succeq 0
  \] for some real symmetric matrices $G_{1},\dots,G_{k}\in\RR^{\binom{n}{d-1}\times \binom{n}{d-1}}$.
If there exists a positive definite matrix $A\in\RR^{\binom{n}{d-1}}$ such that $\tr(AG_{i})=0$ for all $i=0,\dots,k$, then $S_{G}$ is empty. 
 \end{lem}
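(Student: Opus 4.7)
The plan is to argue by contradiction using the standard self-duality of the PSD cone under the trace pairing. Suppose $S_G$ is non-empty, and pick $\lambda\in S_G$. Set $P:=G_{0}+\lambda_{1}G_{1}+\dots+\lambda_{k}G_{k}\succeq 0$. By the very definition of the Gram spectrahedron, $P$ is a PSD Gram matrix representing $h$, i.e.\ $h=m^{t}Pm$, so in particular $P\neq 0$ since $h\neq 0$.

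Next I would compute the trace pairing of $A$ with $P$:
\[
 \tr(AP)=\tr(AG_{0})+\sum_{i=1}^{k}\lambda_{i}\tr(AG_{i})=0
\]
by the hypothesis that $\tr(AG_{i})=0$ for every $i=0,1,\dots,k$. So the plan reduces to showing that $\tr(AP)=0$ together with $A\succ 0$ and $P\succeq 0$ forces $P=0$, which will contradict $P\neq 0$.

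The key step is this trace-positivity lemma, which I would prove as follows. Since $A\succ 0$, its symmetric square root $A^{1/2}$ is positive definite and in particular invertible. Cyclicity of the trace gives
\[
 0=\tr(AP)=\tr(A^{1/2}PA^{1/2}).
\]
The matrix $A^{1/2}PA^{1/2}$ is symmetric PSD (as a congruence of the PSD matrix $P$), so its eigenvalues are non-negative and their sum is its trace. Thus all its eigenvalues vanish, forcing $A^{1/2}PA^{1/2}=0$, and then $P=0$ by invertibility of $A^{1/2}$. This contradiction finishes the argument.

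There is no real obstacle here beyond bookkeeping; the only mild point is to articulate carefully that the assumption on $A$ (positive definite, not merely PSD) is exactly what is needed in the last step, and that $P\neq 0$ uses $h\neq 0$ through the Gram identity $h=m^{t}Pm$. Geometrically, the argument is just producing a separating hyperplane for the convex cone of PSD matrices from the affine subspace $\{G_{0}+\sum\lambda_{i}G_{i}\}$ via the pairing $X\mapsto \tr(AX)$; $A$ lies in the relative interior of the dual PSD cone, so the usual boundary case $P=0$ is the only way equality can occur.
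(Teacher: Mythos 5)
Your proof is correct and follows essentially the same route as the paper: assume a point of $S_G$ exists, note the resulting Gram matrix is PSD and nonzero (since $h\neq0$), and derive a contradiction from $\tr(AG)=0$ by factoring the positive definite $A$ (the paper writes $A=BB^{t}$ where you use $A^{1/2}$) and using that the trace of a nonzero PSD matrix is positive.
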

 \begin{proof}
   Assume that $S_{G}\neq \emptyset$. Then there exists $\lambda\in\RR^{k}$ such that $$G=G_{0}+\lambda_{1}G_{1}+\dots \lambda_{k}G_{k}$$ is positive semidefinite with $\tr(AG)=0$. Since $A$ is positive definite,
   there exist an invertible matrix $B\in\RR^{\binom{n}{d-1}\times \binom{n}{d-1}}$ such that
   $BB^{t}=A$. Since $G$ is positive semidefinite and non-zero, 
   \[0=\tr(AG)=\tr(BB^{t}G)=\tr(B^{t}GB)>0\]
   gives a contradiction.
 \end{proof}

\begin{ex}
 The circuit hyperplanes of the sparse paving matroid $\mathcal{M}_{417}$ are given by the quadrilateral faces of the heptahedron depicted in \Cref{fig:hepta}.
\end{ex}

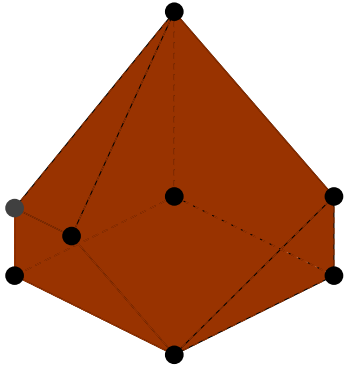
\begin{figure}
 \begin{center}
\definecolor{zzttqq}{rgb}{0.6,0.2,0}
\definecolor{uququq}{rgb}{0.25,0.25,0.25}
\begin{tikzpicture}[line cap=round,line join=round,>=triangle 45,x=0.7cm,y=0.7cm]
\clip(-2,2) rectangle (5,9.5);
\fill[dash pattern=on 2pt off 2pt,color=zzttqq,fill=zzttqq,fill opacity=0.1] (1.5,9) -- (-0.43,4.75) -- (1.5,2.5) -- (4.5,5.5) -- cycle;
\fill[color=zzttqq,fill=zzttqq,fill opacity=0.1] (-1.5,5.28) -- (-0.43,4.75) -- (1.5,2.5) -- (-1.5,4) -- cycle;
\fill[dash pattern=on 2pt off 2pt,color=zzttqq,fill=zzttqq,fill opacity=0.1] (1.5,2.5) -- (4.5,4) -- (1.5,5.5) -- (-1.5,4) -- cycle;
\fill[dash pattern=on 2pt off 2pt,color=zzttqq,fill=zzttqq,fill opacity=0.1] (-1.5,4) -- (-1.5,5.28) -- (1.5,9) -- (1.5,5.5) -- cycle;
\fill[dash pattern=on 2pt off 2pt,color=zzttqq,fill=zzttqq,fill opacity=0.1] (1.5,9) -- (4.5,5.5) -- (4.5,4) -- (1.5,5.5) -- cycle;
\draw (1.5,2.5)-- (4.5,4);
\draw (1.5,2.5)-- (-1.5,4);
\draw [dash pattern=on 2pt off 2pt] (1.5,5.5)-- (-1.5,4);
\draw [dash pattern=on 2pt off 2pt] (1.5,5.5)-- (4.5,4);
\draw [dash pattern=on 2pt off 2pt] (1.5,5.5)-- (1.5,9);
\draw (1.5,2.5)-- (4.5,5.5);
\draw (4.5,5.5)-- (1.5,9);
\draw (4.5,5.5)-- (4.5,4);
\draw (-0.43,4.75)-- (1.5,2.5);
\draw (-0.43,4.75)-- (1.5,9);
\draw (1.5,9)-- (-1.5,5.28);
\draw (-1.5,5.28)-- (-0.43,4.75);
\draw (-1.5,5.28)-- (-1.5,4);
\draw [dash pattern=on 2pt off 2pt,color=zzttqq] (1.5,9)-- (-0.43,4.75);
\draw [dash pattern=on 2pt off 2pt,color=zzttqq] (-0.43,4.75)-- (1.5,2.5);
\draw [dash pattern=on 2pt off 2pt,color=zzttqq] (1.5,2.5)-- (4.5,5.5);
\draw [dash pattern=on 2pt off 2pt,color=zzttqq] (4.5,5.5)-- (1.5,9);
\draw [color=zzttqq] (-1.5,5.28)-- (-0.43,4.75);
\draw [color=zzttqq] (-0.43,4.75)-- (1.5,2.5);
\draw [color=zzttqq] (1.5,2.5)-- (-1.5,4);
\draw [color=zzttqq] (-1.5,4)-- (-1.5,5.28);
\draw [dash pattern=on 2pt off 2pt,color=zzttqq] (1.5,2.5)-- (4.5,4);
\draw [dash pattern=on 2pt off 2pt,color=zzttqq] (4.5,4)-- (1.5,5.5);
\draw [dash pattern=on 2pt off 2pt,color=zzttqq] (1.5,5.5)-- (-1.5,4);
\draw [dash pattern=on 2pt off 2pt,color=zzttqq] (-1.5,4)-- (1.5,2.5);
\draw [dash pattern=on 2pt off 2pt,color=zzttqq] (-1.5,4)-- (-1.5,5.28);
\draw [dash pattern=on 2pt off 2pt,color=zzttqq] (-1.5,5.28)-- (1.5,9);
\draw [dash pattern=on 2pt off 2pt,color=zzttqq] (1.5,9)-- (1.5,5.5);
\draw [dash pattern=on 2pt off 2pt,color=zzttqq] (1.5,5.5)-- (-1.5,4);
\draw [dash pattern=on 2pt off 2pt,color=zzttqq] (1.5,9)-- (4.5,5.5);
\draw [dash pattern=on 2pt off 2pt,color=zzttqq] (4.5,5.5)-- (4.5,4);
\draw [dash pattern=on 2pt off 2pt,color=zzttqq] (4.5,4)-- (1.5,5.5);
\draw [dash pattern=on 2pt off 2pt,color=zzttqq] (1.5,5.5)-- (1.5,9);
\begin{scriptsize}
\fill [color=black] (1.5,2.5) circle (3.5pt);
\fill [color=black] (4.5,4) circle (3.5pt);
\fill [color=black] (-1.5,4) circle (3.5pt);
\fill [color=black] (1.5,5.5) circle (3.5pt);
\fill [color=black] (1.5,9) circle (3.5pt);
\fill [color=black] (4.5,5.5) circle (3.5pt);
\fill [color=black] (-0.43,4.75) circle (3.5pt);
\fill [color=uququq] (-1.5,5.28) circle (3.5pt);
\end{scriptsize}
\end{tikzpicture}

 \end{center}
\caption{The dependent sets of size $4$ of the matroid $\mathcal{M}_{417}$ are the five quadrilateral facets of the depicted heptahedron.}\label{fig:hepta}
\end{figure}

\begin{rem}
 For the matroids $\mathcal{M}_{k}$ where $k$ is in
\[
  \left\{ 393, 395, 397, 399, 400, 401, 403, 404, 405, 410, 411, 412, 421, 423, 433, 664, 717 \right\}
\]there are some indices  $i,j$ such that the SDP solver cannot find positive semidefinite rational Gram matrices during the SOS test on $\Delta_{i,j}$ but only positive semidefinite Gram matrices having floating points as entries (i.e, they are approximate). These numerical results indicate that these matroids might be SOS-Rayleigh but we do not have a symbolic certificate for that. However, there is always another pair of indices for which we can find a rational positive semidefinite Gram matrix, so this issue does not affect the proof of the half-plane property.
\end{rem}

\begin{table}
  $\begin{array}{|c|c|}
    \hline
    \textbf{Properties}&\textbf{Count}\\
    \hline
    \text{Half-plane property}& 287\\
    \hline
    \text{Certified SOS-Rayleigh} & 256 \\
   \hline
    \text{Half-plane property but not SOS-Rayleigh } & 14\\
    \hline
    \text{Not having the HPP}& 22\\
    \hline
\end{array}$
  \caption{Summary of the classification of simple connected matroids on $8$ elements and rank $3$ or $4$ all of whose proper minors have the half-plane property.}
\end{table}

  We provide the list of simple matroids on $8$ elements of rank $3$ and $4$ that are HPP, that are not HPP, some matroids that are not SOS-Rayleigh, and matroids for which we could not detect whether they are SOS-Rayleigh at  \url{https://zenodo.org/record/6108027} .

\section{{Experiments on Matroids on Nine Elements}}\label{9}
In this section, we implement the methods used in the previous section on simple and connected matroids on $9$ elements of rank $3$ and $4$. We use the list of all simple matroids on 9 elements provided by  Matsumoto, Moriyama, Imai and Bremner in \cite{Matsumoto-2011}. In this case the number of matroids is very high and we provide our results at \url{https://zenodo.org/record/6108027}.

After excluding forbidden minors, we conduct an SOS test on the Rayleigh differences of simple connected matroids on $9$ elements of rank $3$. Our tests show that the only minimal forbidden minors for the half-plane property on $9$ elements of rank $3$ are the Pappus, non-Pappus and (non-Pappus$\backslash 9$)$+e$ matroids. The remaining $116$ matroids have the half-plane property. For ten of these matroids there are some indices $(i,j) $ such that for $\Delta_{i,j}$ the SDP solver can only find positive semidefinite Gram matrices with floating point entries (i.e., they are approximate). On the other hand, we could not certify that they are not SOS-Rayleigh. The numerical data from the SDP outputs suggests that such $\Delta_{i,j}$ are sums of squares with non-rational coefficients only.

  \begin{ex}
The matroid $M$ shown in Figure~\ref{r3nonsos} on ground set $E=[9]$ with non-bases $\{1,2,3\} $, $\left\{1,4,5  \right\} $, $\left\{ 2,4,6 \right\}, \left\{ 3,7,8 \right\} $ and
$ \left\{ 5,7,9 \right\}$ is one of the ten matroids of rank $3$ with the HPP for which our algorithm cannot prove or disprove that they are SOS-Rayleigh.
\end{ex}

\begin{figure}
  \begin{center}
    \begin{subfigure}{0.4\linewidth}
      \begin{center}
    \begin{tikzpicture}[every node/.style={fill=black, inner sep=2pt, circle},x=0.5cm,y=0.5cm]
        \node(a) at (0,0){};
        \node(b) at (1.5,0){};
        \node(c) at (3,0){};
        \node(d) at (3,2.5){};
        \node(e) at (3,5){};
        \node(f) at (0,5){};
        \node(g) at (1.5,5){};
        \node(h) at (0,2.5){}; 
        \node (bh) at ($(b)!0.5!(h)$){};
        \draw [line width=0.5pt](a)--(b)--(c);
        \draw [line width=0.5pt](c)--(d)--(e)--(g)--(f);
          \draw [line width=0.5pt](a)--(h)--(f);
        \draw [line width=0.5pt](b)--(bh)--(h);
      \end{tikzpicture}
      \end{center}
      \caption{A matroid with the HPP for which it is unknown whether it is SOS-Rayleigh}\label{r3nonsos}
    \end{subfigure}
    \begin{subfigure}{0.4\linewidth}
      \begin{center}
      \begin{tikzpicture}[every node/.style={fill=black, inner sep=2pt, circle},x=0.5cm,y=0.5cm]
        \node(a) at (0,0){};
        \node(b) at (2.5,0){};
        \node(c) at (5,0){};
        \node(d) at (0,5){};
        \node(e) at (2.5,5){};
        \node(f) at (5,5){};
        \node[fill=magenta](g) at (4,2.5){};
        
        \node (ae) at ($(a)!0.5!(e)$){};
        \node (af) at ($(a)!0.5!(f)$){};
        \draw [line width=0.5pt](a)--(b)--(c);
        \draw [line width=0.5pt](d)--(e)--(f);
        \draw [line width=0.5pt](a)--(ae)--(e);
        \draw [line width=0.5pt](a)--(af)--(f);
        \draw [line width=0.5pt](d)--(ae)--(b);
         \draw [line width=0.5pt](d)--(af)--(c);
    
       \end{tikzpicture}
       \end{center}
       \caption{(Non-Pappus$\backslash 9$) $+e$}
    \end{subfigure}
  \end{center}
  \caption{}
\end{figure}

There are $185982$ simple matroids on $9$ elements of rank $4$. We first extend the list of excluded minors (by adding those of rank $3$ on $9$ elements, and of rank $4$ on $8$ elements), and eliminate matroids having one of the 35 forbidden minors as a minor. By applying a sums of squares test on the Rayleigh differences of the remaining connected matroids, we found that  $4125$ matroids have the half-plane property. For each of them, there is a pair $(i,j)$ such that the SDP solver could find positive semidefinite rational Gram matrices during the SOS test on $\Delta_{i,j}$. In addition, there are $819$ matroids for which the SDP solvers could only find positive semidefinite Gram matrices with floating point entries (i.e., they are approximate). The numerical results suggest they might have the half-plane property, but our computations do not provide a proof. We list those matroids under the category ``Candidates for having the half-plane property''.

  We found $1218$ matroids that do not have the half-plane property all of whose proper minors have the half-plane property. By using the Julia package ``Homotopy Continuation'',  for each such $M$, we were able to find indices $(i,j)$ and $x\in\RR^{7}$ such that $\Delta_{i,j}(h_{M})(x)<0$.
  \begin{table}
  $\begin{array}{|c|c|}
    \hline
    \textbf{Properties}&\textbf{Count}\\
    \hline
    \text{Half-plane property}& 116\\
   \hline
    \text{Undetected SOS-Rayleigh } & 10\\
    \hline
    \text{Not having the HPP}& 3\\
    \hline
\end{array}$
  \caption{Summary of the classification of simple connected matroids on $9$ elements and rank $3$ all of whose proper minors have the half-plane property.}
\end{table}

\begin{table}
  $\begin{array}{|c|c|}
    \hline
    \textbf{Properties}&\textbf{Count}\\
    \hline
    \text{Half-plane property}& 4125\\
    \hline
    \text{Candidates for having the HPP} & 819 \\
   \hline
    \text{Not having the HPP}& 1218\\
    \hline
    \text{Undetected}& 556\\
    \hline
\end{array}$
  \caption{Summary of the test results on simple connected matroids  on $9$ elements of rank $4$ all of whose proper minors have the half-plane property.}
\end{table}

Note that our experiments do not provide a complete classification with respect to the half plane property for matroids on $9$ elements with rank $4$. There are $556$ matroids that neither pass the SOS test, nor the test for finding negative points. We suspect that the Rayleigh differences of those matroids are non-negative, but not sums of squares. However, our computations do not provide a proof. These are listed under the name ``Undetected''. Due to computational time constraints, we did not perform tests for the SOS-Rayleigh property on matroids on $9$ elements of rank $4$: Once we found a sums of squares representation for one pair of indices, we did not continue to check the remaining ones.

\nocite{sage}

 \bigskip
  \noindent \textbf{Acknowledgements.}
 We thank several anonymous referees for providing helpful comments on the paper.

\bibliographystyle{plain}
\bibliography{biblio.bib}

\def\cprime{$'$}
\begin{thebibliography}{10}

\bibitem{Amini2018}
Nima Amini and Petter Br\"{a}nd\'{e}n.
\newblock Non-representable hyperbolic matroids.
\newblock {\em Adv. Math.}, 334:417--449, 2018.

\bibitem{2012-siamSDP}
Grigoriy Blekherman, Pablo~A. Parrilo, and Rekha~R. Thomas, editors.
\newblock {\em Semidefinite optimization and convex algebraic geometry},
  volume~13 of {\em MOS-SIAM Series on Optimization}.
\newblock Society for Industrial and Applied Mathematics (SIAM), Philadelphia,
  PA; Mathematical Optimization Society, Philadelphia, PA, 2013.

\bibitem{negdep}
Julius Borcea, Petter Br\"{a}nd\'{e}n, and Thomas~M. Liggett.
\newblock Negative dependence and the geometry of polynomials.
\newblock {\em J. Amer. Math. Soc.}, 22(2):521--567, 2009.

\bibitem{Branden2007}
Petter Br{\"{a}}nd\'{e}n.
\newblock Polynomials with the half-plane property and matroid theory.
\newblock {\em Adv. Math.}, 216(1):302--320, 2007.

\bibitem{Branden2011}
Petter Br{\"{a}}nd\'{e}n.
\newblock Obstructions to determinantal representability.
\newblock {\em Adv. Math.}, 226(2):1202--1212, 2011.

\bibitem{inbook}
Paul Breiding and Sascha Timme.
\newblock Homotopycontinuation.jl: A package for homotopy continuation in
  julia.
\newblock In James~H. Davenport, Manuel Kauers, George Labahn, and Josef Urban,
  editors, {\em Mathematical Software -- ICMS 2018}, pages 458--465, Cham,
  2018. Springer International Publishing.

\bibitem{burton2014real}
Sam Burton, Cynthia Vinzant, and Yewon Youm.
\newblock A real stable extension of the {V}ámos matroid polynomial.
\newblock {\em arXiv preprint arXiv:1411.2038}, 2014.

\bibitem{MatroidsSource}
Justin Chen.
\newblock Matroids: a package for computations with matroids.
\newblock A \emph{Macaulay2} package available at
  \url{https://github.com/Macaulay2/M2/tree/master/M2/Macaulay2/packages}.

\bibitem{MatroidsArticle}
Justin Chen.
\newblock {Matroids: a \emph{Macaulay2} package}.
\newblock {\em The Journal of Software for Algebra and Geometry}, 9, 2018.

\bibitem{Choe_2004}
Young-Bin Choe, James~G Oxley, Alan~D Sokal, and David~G Wagner.
\newblock Homogeneous multivariate polynomials with the half-plane property.
\newblock {\em Advances in Applied Mathematics}, 32(1-2):88--187, 2004.

\bibitem{SumsOfSquaresSource}
Diego Cifuentes, Thomas Kahle, Pablo~A. Parrilo, and Helfried Peyrl.
\newblock {SumsOfSquares: A \emph{Macaulay2} package. Version~2.1}.
\newblock A \emph{Macaulay2} package available at
  \url{https://github.com/Macaulay2/M2/tree/master/M2/Macaulay2/packages}.

\bibitem{SumsOfSquaresArticle}
Diego Cifuentes, Thomas Kahle, Pablo~A. Parrilo, and Helfried Peyrl.
\newblock {Sums of squares in \emph{Macaulay2}}.
\newblock {\em The Journal of Software for Algebra and Geometry}, 10, 2020.

\bibitem{sage}
W.A.~Stein et~al.
\newblock Sage {M}athematics {S}oftware (version 9.2).
\newblock {S}age Development Team, 2020. \url{https://www.sagemath.org}.

\bibitem{gar}
Lars G{\.a}rding.
\newblock An inequality for hyperbolic polynomials.
\newblock {\em J. Math. Mech.}, 8:957--965, 1959.

\bibitem{Gelfand_1987}
I.~M. Gelfand, R.~M. Goresky, R.~D. MacPherson, and V.~V. Serganova.
\newblock Combinatorial geometries, convex polyhedra, and {S}chubert cells.
\newblock {\em Adv. in Math.}, 63(3):301--316, 1987.

\bibitem{Gelfand2}
I.~M. Gelfand and V.~V. Serganova.
\newblock Combinatorial geometries and the strata of a torus on homogeneous
  compact manifolds.
\newblock {\em Uspekhi Mat. Nauk}, 42(2(254)):107--134, 287, 1987.

\bibitem{M2}
Daniel~R. Grayson and Michael~E. Stillman.
\newblock Macaulay2, a software system for research in algebraic geometry.
\newblock Available at \url{http://www.math.uiuc.edu/Macaulay2/}.

\bibitem{Huh-21}
June Huh, Benjamin Schröter, and Botong Wang.
\newblock Correlation bounds for fields and matroids.
\newblock {\em Journal of the European Mathematical Society}, Jun 2021.

\bibitem{kumvam}
Mario Kummer.
\newblock A note on the hyperbolicity cone of the specialized {V}\'{a}mos
  polynomial.
\newblock {\em Acta Appl. Math.}, 144:11--15, 2016.

\bibitem{KumBez}
Mario Kummer.
\newblock Determinantal representations and {B}\'{e}zoutians.
\newblock {\em Math. Z.}, 285(1-2):445--459, 2017.

\bibitem{kummer2020spectral}
Mario Kummer.
\newblock Spectral linear matrix inequalities.
\newblock {\em Adv. Math.}, 384:Paper No. 107749, 36, 2021.

\bibitem{Kummer-2013-hyperbolic-polynomials-interlacers}
Mario Kummer, Daniel Plaumann, and Cynthia Vinzant.
\newblock Hyperbolic polynomials, interlacers, and sums of squares.
\newblock {\em Math. Program.}, 153(1, Ser. B):223--245, 2015.

\bibitem{Matsumoto-2011}
Yoshitake Matsumoto, Sonoko Moriyama, Hiroshi Imai, and David Bremner.
\newblock Matroid enumeration for incidence geometry.
\newblock {\em Discrete \& Computational Geometry}, 47(1):17–43, Nov 2011.

\bibitem{matroids9}
Dillon Mayhew and Gordon~F. Royle.
\newblock Matroids with nine elements.
\newblock {\em J. Combin. Theory Ser. B}, 98(2):415--431, 2008.

\bibitem{murota}
Kazuo Murota.
\newblock {\em Discrete convex analysis}.
\newblock SIAM Monographs on Discrete Mathematics and Applications. Society for
  Industrial and Applied Mathematics (SIAM), Philadelphia, PA, 2003.

\bibitem{ingleton}
Peter Nelson and Jorn van~der Pol.
\newblock Doubly exponentially many {I}ngleton matroids.
\newblock {\em SIAM J. Discrete Math.}, 32(2):1145--1153, 2018.

\bibitem{oxley}
James Oxley.
\newblock {\em Matroid theory}, volume~21 of {\em Oxford Graduate Texts in
  Mathematics}.
\newblock Oxford University Press, Oxford, second edition, 2011.

\bibitem{soshyperbolic}
James Saunderson.
\newblock Certifying polynomial nonnegativity via hyperbolic optimization.
\newblock {\em SIAM J. Appl. Algebra Geom.}, 3(4):661--690, 2019.

\bibitem{Vinnikov_2012}
Victor Vinnikov.
\newblock L{MI} representations of convex semialgebraic sets and determinantal
  representations of algebraic hypersurfaces: past, present, and future.
\newblock In {\em Mathematical methods in systems, optimization, and control},
  volume 222 of {\em Oper. Theory Adv. Appl.}, pages 325--349.
  Birkh\"{a}user/Springer Basel AG, Basel, 2012.

\bibitem{ineqs}
David~G. Wagner.
\newblock Matroid inequalities from electrical network theory.
\newblock {\em Electron. J. Combin.}, 11(2):Article 1, 17, 2004/06.

\bibitem{MR2134193}
David~G. Wagner.
\newblock Rank-three matroids are {R}ayleigh.
\newblock {\em Electron. J. Combin.}, 12:Note 8, 11, 2005.

\bibitem{Wagner_2009}
David~G. Wagner and Yehua Wei.
\newblock A criterion for the half-plane property.
\newblock {\em Discrete Math.}, 309(6):1385--1390, 2009.

\end{thebibliography}
 \end{document}